\newcommand\AMSname{AMS subject classifications}
\newcommand\keywordsname{Key words}
\newcommand{\newcomment}[1]{}
\newenvironment{@abssec}[1]{%
	\if@twocolumn
	\section*{#1}%
	\else
	\vspace{.05in}\footnotesize
	\parindent .2in
	{\upshape\bfseries #1. }\ignorespaces
	\fi}
{\if@twocolumn\else\par\vspace{.1in}\fi}
\newenvironment{AMS}{\begin{@abssec}{\AMSname}}{\end{@abssec}}
\newtheorem{definition}{Definition}[section]
\numberwithin{equation}{section}
\numberwithin{figure}{section}
\numberwithin{table}{section}
\newtheorem{theorem}{Theorem}[section]
\newtheorem{lemma}{Lemma}[section]
\newtheorem{corollary}{Corollary}[section]
\newtheorem{remark}{Remark}[section]
\newcommand{\email}[1]{\protect\href{mailto:#1}{#1}}
\newenvironment{keywords}{\begin{@abssec}{\keywordsname}}{\end{@abssec}}
\DeclareMathOperator{\Real}{Re}
\DeclareMathOperator{\imag}{Imag}
\DeclareMathOperator{\Span}{span}
\DeclareMathOperator{\sign}{sign}
\DeclareMathOperator{\dist}{dist}
\DeclareMathOperator{\Diag}{Diag}
\DeclareMathOperator{\ii}{i}
\newcommand{\mat}[1]{\left[ \begin{array}{#1} }
\newcommand{\rix}{\end{array} \right]}
\def\bbordermatrix#1{\begingroup \m@th
	\@tempdima 4.75\p@
	\setbox\z@\vbox{%
		\def\cr{\crcr\noalign{\kern2\p@\global\let\cr\endline}}%
		\ialign{$##$\hfil\kern2\p@\kern\@tempdima&\thinspace\hfil$##$\hfil
			&&\quad\hfil$##$\hfil\crcr
			\omit\strut\hfil\crcr\noalign{\kern-\baselineskip}%
			#1\crcr\omit\strut\cr}}%
	\setbox\tw@\vbox{\unvcopy\z@\global\setbox\@ne\lastbox}%
	\setbox\tw@\hbox{\unhbox\@ne\unskip\global\setbox\@ne\lastbox}%
	\setbox\tw@\hbox{$\kern\wd\@ne\kern-\@tempdima\left[\kern-\wd\@ne
		\global\setbox\@ne\vbox{\box\@ne\kern2\p@}%
		\vcenter{\kern-\ht\@ne\unvbox\z@\kern-\baselineskip}\,\right]$}%
	\null\;\vbox{\kern\ht\@ne\box\tw@}\endgroup}
\title{2D Eigenvalue Problem III: Convergence Analysis of
the 2D Rayleigh Quotient Iteration~\thanks{
version dated \today.
}
}
\author{
Tianyi Lu\thanks{School of Mathematical Sciences, 
Fudan University, Shanghai 200433, China
(\email{tylu17@fudan.edu.cn}, \email{yfsu@fudan.edu.cn}). 
}
\and Yangfeng Su\footnotemark[2]
\and Zhaojun Bai\thanks{Department of Computer Science
and Department of Mathematics, University of California,
Davis, CA 95616, USA (\email{zbai@ucdavis.edu})}
}
\date{\today}
\begin{document}
\maketitle

\begin{abstract}
In Part I of this paper, we introduced
a two dimensional eigenvalue problem (2DEVP) of a matrix pair 
and investigated its fundamental theory such as existence, 
variational characterization and number of 2D-eigenvalues.
In Part II, we proposed a Rayleigh quotient iteration (RQI)-like 
algorithm (2DRQI) for computing a 2D-eigentriplet of the 2DEVP 
near a prescribed point, and discussed applications of 2DEVP and
2DRQI for solving the minimax problem of Rayleigh quotients, 
and computing the distance to instability. 
In this third part, we present convergence analysis of the 2DRQI. 
We show that under some mild conditions, 
the 2DRQI is locally quadratically convergent for computing a nonsingular 
2D-eigentriplet. 
\end{abstract}

\begin{keywords}
2D eigenvalue problem; 2D Rayleigh quotient iteration;
quadratic convergence.
\end{keywords}

\begin{AMS}
65K10, 65F15
\end{AMS}


\section{Introduction}\label{eq:intro}
Given Hermitian matrices $A, C \in \mathbb{C}^{n\times n}$ and 
$C$ is indefinite, the 2D eigenvalue problem (2DEVP) is to 
find scalars $\mu, \lambda \in \mathbb{R}$ and nonzero vectors 
$x \in \mathbb{C}^n$ such that
\begin{subequations}\label{2deig}
\begin{empheq}[left={}\empheqlbrace]{alignat=2}
(A-\mu C) x & = \lambda x,  \label{eq:1a} \\
x^HCx & = 0, \label{eq:1b}   \\
x^Hx & = 1,  \label{eq:1c}
\end{empheq}
\end{subequations}
The pair $(\mu, \lambda)$ is called a \emph{2D-eigenvalue}, 
$x$ is called the corresponding \emph{2D-eigenvector}, and 
the triplet $(\mu,\lambda,x)$ is called a \emph{2D-eigentriplet}. 
We use the term ``$2D$'' based on the fact that an eigenvalue 
has two components,  which is a point in the
two dimensional $(\mu, \lambda)$-plane.

In Part I of this work~\cite{2DEVPI}, we presented fundamental properties 
of the 2DEVP such as the existence, variational characterizations, and 
the necessary and sufficient conditions for the finite number of 
2D-eigenvalues. We also discussed the applications of the 2DEVP 
on the minimax problem of Rayleigh quotients and the computation 
of distance to instability.
In Part II \cite{2DEVPII}, we proposed a Rayleigh quotient iteration 
(RQI)-like algorithm (2DRQI) for computing an eigentriplet of the 2DEVP 
near a prescribed point. Numerical experiments show its promising 
performance compared with eigenvalue optimization algorithms
for finding the minimax of Rayleigh quotients and computing distance 
to instability.

In this part, we present the convergence analysis of the 2DRQI.
We will prove that the 2DRQI is locally quadratically convergent for
computing a nonsingular 2D-eigentriplet (see the definition of
the term nonsingular in \Cref{subsec:singularity}).

The rest of this paper is organized as follows.
In \Cref{sec:basic_concepts}, we introduce the concepts
of nonsingularity of a 2D-eigentripet and its 
characterizations, and simple and multiple 2D-eigentriplets and
their properties. 
In \Cref{sec:rqi}, we recap the essential steps of the 2DRQI
presented in \cite{2DEVPII}. 
In \Cref{sec:perturbation}, we recall several known results
on matrix perturbation analysis and {derive a couple of
new results} that will be used for the convergence analysis of the 2DRQI.
\Cref{sec:converge_analysis_I,sec:converge_analysis_II} 
provide convergence analysis of the 2DRQI. 
Conclusion remarks are in \Cref{sec:conclusion}. 


\section{Preliminaries}\label{sec:basic_concepts}

\subsection{Analytical eigencurves and derivatives} 

In Section 4 of Part I \cite{2DEVPI}, we showed that 
equations \eqref{eq:1a} and \eqref{eq:1c} of the 2DEVP~\eqref{2deig}
constitute the parameter eigenvalue problem of $H(\mu) = A-\mu C$.  
For $\mu\in\mathbb{R}$, there exist $n$ real eigenvalues 
$\lambda_i(\mu)$ and corresponding orthonormal eigenvectors of $H(\mu)$. If these eigenvalues $\lambda_i(\mu)$
are sorted such that $\lambda_1(\mu)\ge \cdots \ge \lambda_n(\mu)$, 
then we have $n$ {\em sorted eigencurves} $\lambda_i(\mu)$
for $i = 1, 2, \ldots, n$.
Sorted eigencurves $\lambda_i(\mu)$ might not be differentiable. 
In Part I, we introduced {\em analyticalized eigencurves} 
and {\em analyticalized eigenvector functions} such that they are real 
analytic~\cite[p.\,3]{Krantz2002} on $\mathbb{R}$. The following theorem, 
which is built on Theorem~4.2 of Part I, shows that 
analyticalized eigenvectors enjoy some appealing properties. 

\begin{theorem}\label{Thm:defxmu}
For Hermitian matrices $A$ and $C$,
there exist scalar functions $\widetilde{\lambda}_1(\mu)$, $\cdots$, $\widetilde{\lambda}_n(\mu)$
and matrix-valued functions
$X(\mu)=\begin{bmatrix}x_1(\mu),& \cdots, & x_n(\mu)\end{bmatrix}$ 
of $\mu\in\mathbb{R}$ such that 
\begin{equation}\label{analytic_eigenfuncs}
\begin{aligned}
A-\mu C & =X(\mu)
\Diag\left(\widetilde{\lambda}_1(\mu),\cdots,\widetilde{\lambda}_n(\mu)\right)X^H(\mu), \\
X^H(\mu)X(\mu) &=I.
\end{aligned}
\end{equation}
Furthermore, $\widetilde{\lambda}_i(\mu)$ and $x_i(\mu)$ 
are real analytic on $\mu\in\mathbb{R}$, and $x_i(\mu)$ satisfies
\begin{equation}\label{eq:xHxis0}
x^H_i(\mu) x_i'(\mu)=0,\quad i=1,\cdots,n.
\end{equation}
\end{theorem}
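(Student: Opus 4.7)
The plan is to leverage Theorem 4.2 of Part I, which already supplies real analytic eigenvalue functions $\widetilde{\lambda}_i(\mu)$ and an analytic orthonormal eigenvector family; what remains is to upgrade those analytic eigenvectors so that the pointwise condition \eqref{eq:xHxis0} holds, using a suitably chosen analytic phase rotation.

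First, I would invoke Theorem 4.2 of \cite{2DEVPI} to obtain real analytic scalar functions $\widetilde{\lambda}_i(\mu)$ and a real analytic matrix function $Y(\mu)=[y_1(\mu),\ldots,y_n(\mu)]$ satisfying the spectral decomposition in \eqref{analytic_eigenfuncs} together with $Y^H(\mu)Y(\mu)=I$. This gives the first two parts of the theorem directly; the only thing the $y_i$ may fail to satisfy is the orthogonality between each eigenvector and its own derivative.

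Next, for each $i$, differentiate $y_i^H(\mu)y_i(\mu)=1$ to conclude that $\Real\!\bigl(y_i^H(\mu)y_i'(\mu)\bigr)=0$, so that $y_i^H(\mu)y_i'(\mu)=\mathrm{i}\,\alpha_i(\mu)$ for a real analytic function $\alpha_i:\mathbb{R}\to\mathbb{R}$ (analyticity is inherited from $y_i$ and $y_i'$). I would then define $\theta_i(\mu)$ to be any real analytic antiderivative of $\alpha_i(\mu)$ on $\mathbb{R}$ (real analyticity of $\alpha_i$ on $\mathbb{R}$ ensures $\theta_i$ is real analytic on $\mathbb{R}$), and set
\[
x_i(\mu) := e^{-\mathrm{i}\theta_i(\mu)}\,y_i(\mu).
\]
Since multiplying by a unimodular scalar preserves eigenvector status, unit norm, and orthogonality between distinct columns, the matrix $X(\mu)=[x_1(\mu),\ldots,x_n(\mu)]$ is still analytic, orthonormal, and yields the same spectral decomposition \eqref{analytic_eigenfuncs}. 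A direct differentiation gives
\[
x_i^H(\mu)x_i'(\mu)
= e^{\mathrm{i}\theta_i}y_i^H\bigl(-\mathrm{i}\theta_i'\,e^{-\mathrm{i}\theta_i}y_i + e^{-\mathrm{i}\theta_i}y_i'\bigr)
= -\mathrm{i}\alpha_i(\mu) + \mathrm{i}\alpha_i(\mu) = 0,
\]
which is precisely \eqref{eq:xHxis0}.

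The step I expect to require the most care is justifying that the phase correction can be carried out \emph{globally} in a real analytic way rather than only locally. This reduces to checking that $\alpha_i(\mu):=-\mathrm{i}\,y_i^H(\mu)y_i'(\mu)$ is globally real analytic on $\mathbb{R}$ (immediate from Part I's Theorem 4.2) and that its antiderivative is therefore also real analytic on all of $\mathbb{R}$; once this is observed, the rotation $e^{-\mathrm{i}\theta_i(\mu)}$ is well defined and analytic everywhere, and no branch or monodromy issues arise. With that in hand, the theorem follows.
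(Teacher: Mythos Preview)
Your proposal is correct and follows essentially the same route as the paper: start from the analytic diagonalization supplied by Part~I (equivalently Gohberg et al.), observe that $y_i^H y_i'$ is purely imaginary, and integrate to obtain a real analytic phase $\theta_i$ so that $x_i=e^{-\mathrm{i}\theta_i}y_i$ satisfies $x_i^H x_i'=0$. The only cosmetic difference is a sign convention in how the phase is introduced; the paper also spells out slightly more explicitly why the antiderivative of a real analytic function remains real analytic, which you correctly flagged as the step needing care.
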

\begin{proof} In Part I, we have shown that 
by \cite[Theorem S6.3]{Gohberg2009Matrix}, 
there exist real analytic scalar functions $\widetilde{\lambda}_1(\mu)$, 
$\cdots$, $\widetilde{\lambda}_n(\mu)$
and real analytic matrix-valued functions
$\widetilde{X}(\mu)=[\widetilde{x}_1(\mu),\,\cdots,\,\widetilde{x}_n(\mu)$ 
of $\mu\in\mathbb{R}$ such that
\begin{equation}
\begin{aligned}
A-\mu C & =\widetilde{X}(\mu)
\Diag\left(\widetilde{\lambda}_1(\mu),\cdots,\widetilde{\lambda}_n(\mu) \right)\widetilde{X}^H(\mu), \\
\widetilde{X}^H(\mu)\widetilde{X}(\mu) &=I.
\end{aligned}
\end{equation}
Therefore, we only need to show that there exists 
a real-valued real analytic function $\theta_i(\mu)$ 
such that $x_i(\mu)=\widetilde{x}_i(\mu){\rm e}^{\ii\theta_i(\mu)}$ 
satisfies \eqref{eq:xHxis0} for $i = 1, \ldots, n$. 
In fact, if we find such 
$\theta_i(\mu)$, then according to the properties~\cite[pp.\,4,19]{Krantz2002} 
of real analytic functions, $x_i(\mu)$ is also real analytic. 
Equation \eqref{analytic_eigenfuncs} holds by defining 
$X(\mu) = [x_1(\mu),\, \cdots,\, x_n(\mu)]$. 
For brevity, the subindex $i$ of $x_i(\mu)$ 
will be dropped in the analysis below.

Since $\widetilde{x}(\mu)$ is real analytic, we can take derivatives 
of $\widetilde{x}^H(\mu)\widetilde{x}(\mu)=1$ and have
\begin{equation}\label{eq:realxpx}
\Real(\widetilde{x}^H(\mu)\widetilde{x}'(\mu))=0.
\end{equation}
On the other hand, equation \eqref{eq:xHxis0} is equivalent to
\begin{equation}\label{eq:xHxpis0equiv}
\widetilde{x}^H(\mu){\rm e}^{-\ii\theta(\mu)}
\left[\widetilde{x}'(\mu){\rm e}^{\ii\theta(\mu)}+
\widetilde{x}(\mu){\rm e}^{\ii\theta(\mu)}\ii\theta'(\mu)\right] 
= \widetilde{x}(\mu)^H\widetilde{x}'(\mu)+\ii\theta'(\mu)=0.
\end{equation}
Equation \eqref{eq:xHxpis0equiv} gives a natural definition for 
$\theta(\mu)$:
\begin{equation}
\theta(\mu) \equiv \int_{0}^{\mu} 
\ii\widetilde{x}^H(s)\widetilde{x}'(s){\rm d}s, \quad \forall \mu\in\mathbb{R}.
\end{equation}
We now prove $\theta(\mu)$ satisfies the desired properties. 
First, by \eqref{eq:realxpx}, $\widetilde{x}^H(\mu)\widetilde{x}'(\mu)$ is 
purely imaginary and thus $\theta(\mu)$ is a real-valued function. 
Furthermore, $\theta'(\mu) = \ii \widetilde{x}^H(\mu)\widetilde{x}'(\mu)$. 
Hence \eqref{eq:xHxpis0equiv} holds and we further have \eqref{eq:xHxis0}. 

To complete the proof, we only need to prove that $\theta(\mu)$ 
is real analytic.
In fact, a function $f$ defined on $\mathbb{R}$ is called 
real analytic \cite[p.\,3]{Krantz2002} if and only if 
for any $\mu_0\in\mathbb{R}$, $f$ has power series 
$$
f(\mu) = \sum_{i=0}^{\infty}f^{(i)}(\mu_0)(\mu-\mu_0)^i
$$ 
with nonzero convergent radius. By separating the real and imaginary 
parts of $f^{(i)}(\mu_0)$, we can see that the real and imaginary parts 
of $f$ are both real analytic. Thus $\widetilde{x}^H(\mu)$ is still real 
analytic, which by properties of real analytic function 
\cite[pp.\,4,11]{Krantz2002} implies $\theta(\mu)$ is real analytic.
\end{proof}

The next lemma presents the derivatives formula for 
$\lambda(\mu)$ and $x(\mu)$.

\begin{lemma}\label{Thm:derivativeFormula}
Let $(\mu_*,\lambda_*,x_*)$ be a 2D-eigentriplet with $\lambda_*$ being a simple eigenvalue of $A-\mu_*C$. Let 
$\lambda(\mu)$ be an analyticalized eigencurve and $x(\mu)$ be the corresponding analyticalized eigenvector function defined in Theorem~\ref{Thm:defxmu} such that $\lambda_*=\lambda(\mu_*)$ and $x_*=x(\mu_*)$. Then we have
\begin{align}
(A-\mu_*C-\lambda_*I)x'(\mu_*)  &= Cx_*,\label{derivativeFormula1}\\
\lambda''(\mu_*)  &= -2x_*^HCx'(\mu_*).\label{derivativeFormula2}
\end{align}
Equation \eqref{derivativeFormula1} can be further written as
\begin{equation}\label{derivativeFormula3}
x'(\mu_*) = (A-\mu_*C-\lambda_*I)^{\dagger}Cx_*,
\end{equation}
where $\cdot^{\dag}$ is the Moore-Penrose generalized inverse.	
\end{lemma}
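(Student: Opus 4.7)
The plan is to exploit the identity $(A-\mu C)x(\mu) = \lambda(\mu) x(\mu)$, which is valid in a neighborhood of $\mu_*$ by \Cref{Thm:defxmu}, and differentiate it with respect to $\mu$. Since $\lambda(\mu)$ and $x(\mu)$ are real analytic, I can differentiate termwise to get
$$-Cx(\mu) + (A-\mu C)x'(\mu) = \lambda'(\mu)x(\mu) + \lambda(\mu)x'(\mu),$$
which rearranges as $(A-\mu C-\lambda(\mu)I)x'(\mu) = Cx(\mu) + \lambda'(\mu)x(\mu)$.

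Next I would left-multiply by $x^H(\mu)$ and use the Hermiticity of $A-\mu C$, together with $x^H(\mu)(A-\mu C)=\lambda(\mu) x^H(\mu)$ and $x^H(\mu)x(\mu) = 1$; the left-hand side vanishes, giving $\lambda'(\mu) = -x^H(\mu)Cx(\mu)$. Evaluating at $\mu_*$ and invoking the 2DEVP constraint $x_*^H C x_* = 0$ gives $\lambda'(\mu_*) = 0$, and substituting back produces equation~\eqref{derivativeFormula1}.

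For the pseudoinverse form~\eqref{derivativeFormula3}, set $M := A - \mu_* C - \lambda_* I$, which is Hermitian with $\mathrm{null}(M) = \Span(x_*)$ since $\lambda_*$ is a simple eigenvalue. The constraint $x_*^H C x_* = 0$ says $Cx_* \perp x_*$, so $Cx_* \in \mathrm{range}(M)$; therefore $M^\dagger C x_*$ is a particular solution of $My = Cx_*$ lying in $\mathrm{range}(M) = \mathrm{null}(M)^\perp$, i.e., orthogonal to $x_*$. By \eqref{eq:xHxis0}, $x'(\mu_*)$ also satisfies $x_*^H x'(\mu_*) = 0$, and hence is the unique solution of~\eqref{derivativeFormula1} perpendicular to $x_*$, yielding $x'(\mu_*) = M^\dagger C x_*$.

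Finally, \eqref{derivativeFormula2} follows from differentiating $\lambda'(\mu) = -x^H(\mu) C x(\mu)$ once more and evaluating at $\mu_*$:
$$\lambda''(\mu_*) = -x'(\mu_*)^H C x_* - x_*^H C x'(\mu_*) = -2\Real\bigl(x_*^H C x'(\mu_*)\bigr).$$
The subtlety I expect is removing the real part so as to match the stated identity. To handle it, I would left-multiply \eqref{derivativeFormula1} by $x'(\mu_*)^H$: the scalar $x'(\mu_*)^H M x'(\mu_*)$ is real by the Hermiticity of $M$ and equals $x'(\mu_*)^H C x_* = \overline{x_*^H C x'(\mu_*)}$, so $x_*^H C x'(\mu_*)$ is itself real and \eqref{derivativeFormula2} follows without the $\Real(\cdot)$.
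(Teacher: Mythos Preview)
Your proof is correct and follows essentially the same approach as the paper: differentiate the eigenvalue equation, left-multiply by $x^H(\mu)$ to extract $\lambda'(\mu)=-x^H(\mu)Cx(\mu)$, use $x_*^HCx_*=0$ to obtain \eqref{derivativeFormula1}, and invoke simplicity together with the orthogonality \eqref{eq:xHxis0} for the pseudoinverse form \eqref{derivativeFormula3}. The only minor difference is in how \eqref{derivativeFormula2} is obtained: the paper differentiates the eigenvalue equation a second time and then left-multiplies by $x^H(\mu)$, which yields $\lambda''(\mu_*)=-2x_*^HCx'(\mu_*)$ directly without a $\Real(\cdot)$ appearing, whereas you differentiate $\lambda'(\mu)=-x^H(\mu)Cx(\mu)$ and then supply the neat extra observation that $x'(\mu_*)^H M x'(\mu_*)$ is real to drop the real part---both routes are valid and arrive at the same place.
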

\begin{proof}
Consider the parameter eigenvalue problem  
\begin{equation} \label{eq:peigeq} 
(A - \mu C) x(\mu) = \lambda(\mu) x(\mu).
\end{equation} 
By taking the derivative with respect to $\mu$, we have
\begin{align} \label{middle1} 
(A-\mu C-\lambda(\mu) I)x'(\mu)  = (C+\lambda'(\mu)I)x(\mu).
\end{align}
Multiplying \eqref{middle1} by $x^H(\mu)$ from left and combining with \eqref{eq:peigeq}, we have   
	\begin{align} \label{eq:secondderi} 
		\lambda'(\mu)  =-x^H(\mu)Cx(\mu).
	\end{align}
	At $\mu=\mu_*$, the derivative becomes 
	\begin{align} \label{eq:lambdader1} 
		\lambda^{\prime}(\mu_*) =-x_*^{H}Cx_*=0.
	\end{align}
	Combined with \eqref{middle1}, we have
	\begin{equation}\label{derivative_1}
		(A - \mu_* C - \lambda_* I)x^{\prime}(\mu_*) = Cx_*.
	\end{equation}
This proves the equation~\eqref{derivativeFormula1}.

To prove the identity \eqref{derivativeFormula2}, we take 
the derivative of \eqref{middle1} with respect to $\mu$ and obtain
\begin{align} 
	(A-\mu C-\lambda(\mu) I)x''(\mu) 
	= 2(C+\lambda'(\mu)I)x'(\mu)+\lambda''(\mu)x(\mu).  \label{middle1b}
\end{align}
Multiplying \eqref{middle1b} by $x^H(\mu)$ from left, 
by \eqref{eq:peigeq}, we have   
\begin{align}
	\lambda''(\mu)  =-2x^H(\mu)(C+\lambda'(\mu)I)x'(\mu). \label{eq:secondderi2}
\end{align}
The equation \eqref{derivativeFormula2} is derived by
taking $\mu=\mu_*$ and the equation \eqref{eq:lambdader1}.

To prove the equation \eqref{derivativeFormula3}, we notice that 
since the multiplicity of $\lambda_*$ is 1, the null subspace 
of $A-\mu_*C-\lambda_*I$ is spanned by $x_*$. Since $(x'(\mu_*))^H x_*=0$ 
by \eqref{eq:xHxis0}, equation~\eqref{derivativeFormula1} implies
the equation \eqref{derivativeFormula3}.

\end{proof}

\subsection{Singularity of 2D-eigentriplets and 
characterizations}\label{subsec:singularity}
In Part II \cite{2DEVPII}, 
we indicated that the 2DEVP \eqref{2deig} can be viewed 
as the problem of finding the root of the following
system of nonlinear equations
\[
F(\mu,\lambda,x) \equiv
\left[ \begin{array}{r}
	(A-\mu C-\lambda I)x  \\
	-x^H C x/2  \\ 
	-x^H x/2 + 1/2 \\ 
\end{array} \right]  = 0.  
\]
The Jacobian of $F$ is defined as 
\begin{equation} \label{eq:jacobi}
J(\mu,\lambda,x)=
\left[
\begin{array}{c|cc}
A-\mu C-\lambda I & -Cx &-x \\
\hline
-x^HC & 0 & 0\\
-x^H & 0 & 0
\end{array}
\right].
\end{equation}
The following definition introduces the notion of singularity of
a 2D-eigentriplet. 

\begin{definition} \label{def:singularity}
A 2D-eigentriplet $(\mu_*, \lambda_*, x_*)$ of $(A,C)$ is called
\emph{nonsingular} if the Jacobian $J(\mu_*, \lambda_*, x_*)$
is nonsingular.  Otherwise, it is called \emph{singular}.
\end{definition}

The following theorem provides characterizations of the singularity.
By these characterizations, we see that the singularity of
a 2D-eigentriplet $(\mu_*,\lambda_*,x_*)$
does not {explicitly} depend on $x_*$. Therefore, we will also call
$(\mu_*,\lambda_*)$ a nonsingular (singular) 2D-eigenvalue.

\begin{theorem}\label{Thm:singularity}
Let $(\mu_*,\lambda_*,x_*)$ be a 2D-eigentriplet of
the 2DEVP \eqref{2deig} and $k$ be 
{the multiplicity of $\lambda_*$ for being an eigenvalue of $A-\mu_*C$}.
\begin{enumerate}[(i)]  
\item \label{Thm:singularity:i}
If $k=1$, then the 2D-eigentriplet $(\mu_*, \lambda_*, x_*)$ is nonsingular
if and only if $\lambda''(\mu_*)\neq0$, where $\lambda(\cdot)$ is an 
analyticalized eigencurve satisfying $\lambda(\mu_*)=\lambda_*$.
		
\item \label{Thm:singularity:ii}
If $k=2$, then the 2D-eigentriplet $(\mu_*, \lambda_*, x_*)$ is 
nonsingular if and only if $\widetilde{V}_*^HC\widetilde{V}_*$ is invertible, 
where $\widetilde{V}_*$ is an orthonormal basis of the eigenspace 
associated with eigenvalue $\lambda_*$ of $A-\mu_* C$.
		
\item \label{Thm:singularity:iii}
If $k\geq3$, then the 2D-eigentriplet $(\mu_*, \lambda_*, x_*)$ is singular.
\end{enumerate}
\end{theorem}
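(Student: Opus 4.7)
My plan is to reduce the Jacobian $J(\mu_*,\lambda_*,x_*)$ by a unitary change of coordinates aligned with the eigenspace of $\lambda_*$ for $A-\mu_*C$, then analyze the resulting block system case by case. Let $\widetilde{V}_*$ be an orthonormal basis of the $k$-dimensional eigenspace of $\lambda_*$ and let $V_\perp$ span its orthogonal complement, so that $U=[\widetilde{V}_*,\,V_\perp]$ is unitary and
\[
U^H(A-\mu_*C-\lambda_*I)U \;=\; \mathrm{diag}\bigl(0_{k\times k},\,\Lambda_\perp-\lambda_*I\bigr),
\]
with $\Lambda_\perp-\lambda_*I$ invertible (since $k$ is the full multiplicity of $\lambda_*$). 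Writing $x_* = \widetilde{V}_*s$ with $s\in\mathbb{C}^k$ a unit vector, equation~\eqref{eq:1b} becomes $s^HMs=0$, where $M := \widetilde{V}_*^HC\widetilde{V}_*$. The block unitary $\mathrm{diag}(U,1,1)$ conjugates $J$ into a matrix $\hat J$ of the same rank whose entries are straightforward to read off, and then Schur-elimination of the $V_\perp$-block (permissible because $\Lambda_\perp-\lambda_*I$ is invertible) yields a reduced $(k+2)\times(k+2)$ Hermitian system in unknowns $u\in\mathbb{C}^k$ and $a,b\in\mathbb{R}$ whose null vectors I need to characterize.

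For part (i), $k=1$: the top-left eigenspace block collapses, $M=0$, and after a trivial row/column permutation the reduced matrix becomes diagonal up to signs, so its invertibility reduces to the nonvanishing of the scalar $\sigma := x_*^HCV_\perp(\Lambda_\perp-\lambda_*I)^{-1}V_\perp^HCx_*$. By \Cref{Thm:derivativeFormula}, together with the eigendecomposition of the pseudoinverse, $\sigma = -\tfrac{1}{2}\lambda''(\mu_*)$, so $J$ is nonsingular iff $\lambda''(\mu_*)\neq 0$.

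For part (ii), $k=2$: the $(\mu,\lambda)$-rows of the reduced system give $aMs+bs=0$. Using $s^Hs=1$ together with $s^HMs=0$ forces either $(a,b)=(0,0)$ or else $Ms=0$; and for the $2\times 2$ Hermitian $M$, the condition $Ms=0$ under $s^HMs=0$ is equivalent to $\det M = 0$. When $M$ is invertible one therefore has $a=b=0$ and $v=0$, and the remaining conditions $s^Hu=0$, $s^HMu=0$ on $u\in\mathbb{C}^2$ are linearly independent because $\{s,Ms\}$ is a basis of $\mathbb{C}^2$; hence $u=0$ and $J$ is nonsingular. When $M$ is singular, taking $a=b=0$, $v=0$ and any nonzero $u$ orthogonal to $s$ produces a kernel element, so $J$ is singular. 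For part (iii), $k\geq 3$: again setting $a=b=0$ and $v=0$, the only constraints on $u\in\mathbb{C}^k$ are $s^Hu=0$ and $s^HMu=0$, which cut complex codimension at most $2$ out of $\mathbb{C}^k$; since $k\ge 3$, a nonzero $u$ always exists, so $J$ is singular regardless of $M$.

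The main obstacle I anticipate is case (ii): cleanly deriving the equivalence $\det M = 0 \Leftrightarrow Ms = 0$ under the extra constraint $s^HMs=0$ via the $2\times 2$ spectral structure of $M$, and confirming that the Schur correction produced by eliminating the $V_\perp$-block does not introduce a hidden coupling that breaks the independence of $s^Hu=0$ and $s^HMu=0$ when $M$ is invertible. The remaining cases are essentially dimension counts once the block unitary reduction and Schur complement of $\Lambda_\perp-\lambda_*I$ are in place, with case (i) additionally requiring \Cref{Thm:derivativeFormula} to translate $\sigma$ into $\lambda''(\mu_*)$.
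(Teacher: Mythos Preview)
Your proposal is correct and takes a genuinely different route from the paper. The paper never forms a Schur complement: for each case it chooses a bespoke unitary $Q$ (in case~(i) with columns $x_*$ and $Cx_*/\|Cx_*\|$, in cases~(ii)--(iii) with leading columns $\widetilde V_*$), writes out $\widetilde Q^H J_*\widetilde Q$ explicitly, and argues case by case via elementary row/column operations. In case~(i) the link to $\lambda''(\mu_*)$ goes through the coordinate $y_2$ of $x'(\mu_*)$ in that basis, and the argument splits further into the subcases $\alpha=0$ and $\alpha\neq 0$. In case~(ii) the paper reduces to the $2\times 2$ block $\widetilde V_*^H[Cx_*,\,x_*]$ and then checks by hand that this matrix is singular iff $\widetilde V_*^H C\widetilde V_*$ is.

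Your approach is more uniform: one basis $U=[\widetilde V_*,V_\perp]$ and one Schur elimination of $\Lambda_\perp-\lambda_*I$ handle all three cases, and the identification $\sigma=-\tfrac12\lambda''(\mu_*)$ via \Cref{Thm:derivativeFormula} (specifically $x'(\mu_*)=(A-\mu_*C-\lambda_*I)^\dagger Cx_*=V_\perp(\Lambda_\perp-\lambda_*I)^{-1}V_\perp^H Cx_*$) is clean. Your worry about a ``hidden coupling'' is unfounded: the only Schur correction lands in the $(a,a)$ diagonal slot as $-\sigma$, and once $a=0$ is forced (from $aMs+bs=0$ with $s\perp Ms$) that term disappears, leaving exactly the two constraints $s^H u=0$ and $(Ms)^H u=0$. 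One small slip: $a,b$ should be taken in $\mathbb C$, not $\mathbb R$, since you are testing singularity of $J$ as a complex matrix; the argument is unaffected. What you gain is a single, short reduction; what the paper's approach buys is that each case is self-contained and avoids invoking the pseudoinverse formula for $x'(\mu_*)$.
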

\begin{proof} 
We first note that by $x_*^HCx_*=0$, $x_*$ is orthogonal to $Cx_*$. 
Let $Q$ be an $n\times n$ orthogonal matrix with the first column $x_*$ and 
the second column $q_2$ satisfy $Cx_*=\alpha q_2$. 
Denote $\widetilde{Q}={\rm Diag}(Q,I_2)$. Then
\begin{equation} \label{eq:Jexp}
\widetilde{Q}^HJ_*\widetilde{Q} 
\equiv \widetilde{Q}^HJ(\mu_*, \lambda_*, x_*)\widetilde{Q} 			=\left[\begin{array}{ccc|cc} 0 & 0 & 0  & 0 & -1 \\
0 & a_{22} & a_{23} & -\alpha & 0 \\
0 & a_{23}^H & A_{33} & 0 & 0 \\ \hline
0 &  -\overline{\alpha} &0 & 0 & 0 \\
-1 & 0 & 0 & 0 & 0    \end{array}\right],
\end{equation} 
where $A_{33}$ is $n-2$ by $n-2$.
Let $x(\mu)$ be an analyticalized eigenvector function defined in 
Theorem~\ref{Thm:defxmu}. 
Writing 
\begin{equation} \label{eq:xQ}
x'(\mu_*)=Q[y_1, y_2, y_3^T]^T, 
\end{equation} 
where $y_1$ and $y_2$ are scalars.
Then by  \eqref{derivativeFormula2},
\begin{equation}\label{lampripri}
\lambda''(\mu_*)=-2(Cx_*)^Hx'(\mu_*)=-2\overline{\alpha} y_2.
\end{equation}

Now let us consider the result (i).
By equations \eqref{eq:Jexp} and \eqref{lampripri},
the result (i) is equivalent to prove that 
$J(\mu_*, \lambda_*, x_*)$ is singular if and only if $\lambda''(\mu_*)=0$.

Let us assume $\lambda''(\mu_*)=0$.
By \eqref{lampripri}, it implies $\alpha=0$ or $y_2=0$.
If $\alpha=0$, then by \eqref{eq:Jexp}, $J_*$ is singular since 
it has a zero column. 
If $\alpha\neq0$ and $y_2=0$, we have by \eqref{derivativeFormula1}
\[
Q^H(A-\mu_* C-\lambda_* I)x'(\mu_*)=Q^HCx_*
\]
which implies that 
\begin{align} 
a_{23} y_3 &= \alpha \ (\not=0), \label{eq:derivativeTmp1}\\
A_{33} y_3 &= 0.\label{eq:derivativeTmp2}
\end{align}
Equation~\eqref{eq:derivativeTmp1} says $y_3\not=0$, and 
thus \eqref{eq:derivativeTmp2} admits a nonzero $y_3$. This implies 
$A_{33}$ is singular. By elementary transformation and 
noting $\alpha\neq0$, $\widetilde{Q}^HJ_*\widetilde{Q}$ can be transformed to
\begin{equation}\label{eq:eletransformation}
\left[\begin{array}{ccc|cc} 0 & 0 & 0  & 0 & -1 \\
0 & 0 & 0 & -\alpha & 0 \\
0 & 0 & A_{33} & 0 & 0 \\ \hline
0 &  -\overline{\alpha} &0 & 0 & 0 \\
-1 & 0 & 0 & 0 & 0    \end{array}\right],
\end{equation}
which further implies $Q^HJ_*Q$ is singular since $A_{33}$ is singular. 
Therefore $J_*$ is singular.

On the other hand, assume $J_*$ is singular. If $\alpha=0$, then 
by \eqref{lampripri}, $\lambda^{\prime\prime}(\mu_*) = 0$ and we 
reach the conclusion. If $\alpha\neq0$, by transforming 
$\widetilde{Q}^HJ_*\widetilde{Q}$ to \eqref{eq:eletransformation} 
and noting $J_*$ is singular, we have $A_{33}$ is singular. 
By  \eqref{derivativeFormula1} and \eqref{eq:xQ}, we have
\begin{align}
a_{22}y_2 + a_{23} y_3  &=\alpha \ (\not=0), \label{eq:A331a}\\
a_{23}^Hy_2 + A_{33} y_3&=0.\label{eq:A331b}
\end{align}
Note that since the multiplicity of $\lambda_*$ is 1, 
Matrices $\begin{bmatrix}
			a_{23}\\
			A_{33}
		\end{bmatrix}$ and $\begin{bmatrix}
			a_{22}&a_{23}\\
			a_{23}^H&A_{33}
		\end{bmatrix}$ are of full rank.
Consider the equations 
\begin{equation}\label{eq:A332}
a_{23}z =\alpha, \qquad
A_{33}z =0.
\end{equation}
Since $A_{33}$ is singular, $A_{33}z=0$ admits nonzero solution. 
We further have $a_{23}z\neq0$ since  $\begin{bmatrix}
			a_{23}\\
			A_{33}
		\end{bmatrix}$ is of full rank. 
Thus \eqref{eq:A332} admits a solution, which we denote by $z_*$. Then 
\[
\begin{bmatrix}
a_{22} & a_{23}\\
a_{23}^H & A_{33}
\end{bmatrix}\begin{bmatrix}
0\\
z_*
\end{bmatrix}=\begin{bmatrix}
\alpha\\
0
\end{bmatrix}.
\]
Since $\begin{bmatrix}
a_{22} & a_{23}\\
a_{23}^H & A_{33}
\end{bmatrix}$ is of full rank, 
$[0,z_*^T]^T$ must equal to $[y_2,y_3^T]^T$ according to \eqref{eq:A331a}\eqref{eq:A331b} and thus $y_2=0$. By \eqref{lampripri}, $\lambda^{\prime\prime}(\mu_*)=0$.
This completes the proof of the result (i). 

For the result (ii), we prove that 
$J_*$ is singular if and only if
$\widetilde{V}_*^HC\widetilde{V}_*$ is singular.
Let $Q$ be the orthogonal matrix with its first two columns $\widetilde{V}_*$ and $\widetilde{Q}=\Diag(Q, I_2)$. Then
\[
\widetilde{Q}^HJ_*\widetilde{Q}=\mat{cc|c} O &  
& -\widetilde{V}_*^H[Cx_*,x_*]  \\
& A_{33} & \times \\  \hline
-[Cx_*,x_*]^H\widetilde{V}_* & \times & O\rix, 
\]
where $O$ is a 2-by-2 zero block, $A_{33}$ is nonsingular and 
$\times$ stands for some submatrices.  Obviously, 
\begin{equation}\label{eq:JstarVC}
J_*\text{ is singular} 
\Leftrightarrow
\widetilde{V}_*^H[Cx_*,x_*]\text{ is singular}. 
\end{equation} 
Now for the sake of convenience, we further assume the first column of $\widetilde{V}_*$ is $x_*$, otherwise $\widetilde{V}_*$ will differ from an orthogonal transformation and the conclusion still holds. 
Then from $x_*^HCx_*=0$, we have 
\[
\widetilde{V}_*^H[Cx_*,x_*]=\mat{cc} 0 & 1 \\ \hat{x}^HCx_* & 0 \rix, \quad 
\widetilde{V}_*^HC\widetilde{V}_*
=\mat{cc} 0 & x_*^HC\hat{x} \\ 
\hat{x}^H Cx_* & \hat{x}^HC\hat{x} \rix,
\]
where $\hat{x}$ is the second column of $\widetilde{V}_*$.  Hence
$\widetilde{V}_*^H[Cx_*,x_*]$ is singular 
if and only if 
$\hat{x}^HCx_* =0$, which is equivalent to 
$\widetilde{V}_*^HC\widetilde{V}_*$ is singular.
Together with \eqref{eq:JstarVC}, the result (ii) is proved. 
		
For the result (iii), 
since the multiplicity of $\lambda_*$ is $k$, there exists 
orthogonal $\widetilde{Q}=\Diag(Q, I_2)$ such that
\[
\widetilde{Q}^HJ_*\widetilde{Q}=\mat{c|cc} \begin{array}{cc}O^{k\times k} & \\ & D \end{array} &-Q^HCx_* & -Q^Hx_* \\ \hline
-(Cx_*)^HQ&&\\
-x_*^HQ && \rix.
\]
Since $k\ge 3$, $J_*$ is obviously singular. This completes the proof of
the result (iii).  
\end{proof}

The following theorem provides alternative characterizations 
for the case where the multiplicity of $\lambda_*$ is 2.

\begin{theorem}\label{Thm:singularity_re}
Consider \Cref{Thm:singularity}\eqref{Thm:singularity:ii}, 
denote $\widetilde{\lambda}_i(\cdot)$ and $\widetilde{\lambda}_{i+1}(\cdot)$ 
as the two analyticalized eigencurves of $A-\mu C$ that satisfy 
$\widetilde{\lambda}_i(\mu_*)=\widetilde{\lambda}_{i+1}(\mu_*)=\lambda_*$.  
Then  
\begin{itemize} 
\item[(i)] $(\mu_*,\lambda_*)$ is nonsingular if and only if 
$\widetilde{V}_*^HC\widetilde{V}_*$ is indefinite. 

\item[(ii)]
$(\mu_*,\lambda_*)$ is nonsingular if and only if 
$\tilde{\lambda}_i'(\mu_*)\tilde{\lambda}_{i+1}'(\mu_*)<0$.
\end{itemize} 
\end{theorem}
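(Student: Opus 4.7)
The plan is to build on \Cref{Thm:singularity}\eqref{Thm:singularity:ii}, which already reduces nonsingularity of $(\mu_*,\lambda_*)$ to invertibility of the $2\times 2$ Hermitian matrix $\widetilde{V}_*^H C \widetilde{V}_*$. Since a $2\times 2$ Hermitian matrix is invertible if and only if it is either definite or indefinite, part (i) reduces to ruling out the definite case.

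For part (i), I would note that $x_*$ lies in the eigenspace spanned by the columns of $\widetilde{V}_*$, so there is a unit vector $w\in\mathbb{C}^2$ with $x_*=\widetilde{V}_* w$. The constraint $x_*^H C x_*=0$ from \eqref{eq:1b} then yields $w^H\bigl(\widetilde{V}_*^H C \widetilde{V}_*\bigr) w = 0$. A definite Hermitian matrix cannot annihilate a nonzero vector through its quadratic form, so $\widetilde{V}_*^H C \widetilde{V}_*$ is not definite. Combined with \Cref{Thm:singularity}\eqref{Thm:singularity:ii}, this proves nonsingularity is equivalent to indefiniteness.

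For part (ii), the plan is to replace $\widetilde{V}_*$ by the convenient orthonormal basis $\widehat{V}_*:=[x_i(\mu_*),\, x_{i+1}(\mu_*)]$ built from the analyticalized eigenvectors of \Cref{Thm:defxmu}. Any two orthonormal bases of the eigenspace differ by a $2\times 2$ unitary, so $\widehat{V}_*^H C \widehat{V}_*$ and $\widetilde{V}_*^H C \widetilde{V}_*$ are unitarily similar, sharing invertibility and indefiniteness. I would then compute $\widehat{V}_*^H C \widehat{V}_*$ explicitly: the diagonal entries are $x_j(\mu_*)^H C x_j(\mu_*)=-\widetilde{\lambda}_j'(\mu_*)$ for $j=i,i+1$ by \eqref{eq:secondderi}, while the off-diagonal entry $x_{i+1}(\mu_*)^H C x_i(\mu_*)$ will be shown to vanish by differentiating $(A-\mu C)x_i(\mu)=\widetilde{\lambda}_i(\mu)x_i(\mu)$ at $\mu_*$, left-multiplying by $x_{i+1}(\mu_*)^H$, and using $x_{i+1}^H(A-\mu_* C)=\lambda_* x_{i+1}^H$ together with the orthonormality $x_{i+1}^H x_i=0$. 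Thus $\widehat{V}_*^H C \widehat{V}_*=\Diag\bigl(-\widetilde{\lambda}_i'(\mu_*),-\widetilde{\lambda}_{i+1}'(\mu_*)\bigr)$, and by part (i) this is indefinite (equivalently, nonsingular) if and only if $\widetilde{\lambda}_i'(\mu_*)\widetilde{\lambda}_{i+1}'(\mu_*)<0$.

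The main obstacle is the vanishing of the off-diagonal entry $x_{i+1}(\mu_*)^H C x_i(\mu_*)$, which is what makes the diagonalization work without extracting a determinant with an unknown off-diagonal modulus. Once this is established by the standard perturbation-theoretic differentiation argument above, both equivalences follow cleanly from \Cref{Thm:singularity}\eqref{Thm:singularity:ii}. As a consistency check, the constraint $x_*^H C x_*=0$ applied in the basis $\widehat{V}_*$ forces $-|\alpha_i|^2\widetilde{\lambda}_i'(\mu_*)-|\alpha_{i+1}|^2\widetilde{\lambda}_{i+1}'(\mu_*)=0$ for the coordinates of $x_*$, which is automatically consistent with either $\widetilde{\lambda}_i'(\mu_*)\widetilde{\lambda}_{i+1}'(\mu_*)\le 0$, matching the characterization of part (ii).
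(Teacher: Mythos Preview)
Your argument is correct and part (i) is identical to the paper's. For part (ii) the paper takes a shortcut: it invokes \cite[Thm.~4.4]{2DEVPI} to assert directly that the eigenvalues of $-\widetilde{V}_*^H C\widetilde{V}_*$ are $\widetilde{\lambda}_i'(\mu_*)$ and $\widetilde{\lambda}_{i+1}'(\mu_*)$, and then concludes (as you do) that nonsingularity is equivalent to their product being nonzero, hence negative since definiteness is already excluded. Your route---computing $\widehat{V}_*^H C\widehat{V}_*$ in the analyticalized eigenvector basis, using \eqref{eq:secondderi} for the diagonal and the differentiation argument to kill the off-diagonal---is effectively an inline reproof of that cited result. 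The advantage of your version is that it is self-contained within this paper; the paper's version is shorter because the work was done in Part~I.
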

\begin{proof}
We first prove the equivalence of nonsingularity and indefiniteness 
of $\widetilde{V}_*^HC\widetilde{V}_*$. 
Note that $x_*\in\Span \{\widetilde{V}_*\}$ and $x_*^HCx_*=0$. 
Thus $\widetilde{V}_*^HC\widetilde{V}_*$ is not a definite matrix. 
Therefore, if $\widetilde{V}_*^HC\widetilde{V}_*$ is nonsingular, 
then $\widetilde{V}_*^HC\widetilde{V}_*$ is indefinite. 
On the other hand, if $\widetilde{V}_*^HC\widetilde{V}_*$ is indefinite, 
since the matrix size is 2, $\widetilde{V}_*^HC\widetilde{V}_*$ must 
have one positive and one negative eigenvalue, which implies 
$\widetilde{V}_*^HC\widetilde{V}_*$ is nonsingularity.
	
Now let us consider the statement (i). 
By \Cref{Thm:singularity}\eqref{Thm:singularity:ii}, $(\mu_*,\lambda_*)$ 
is nonsingular if and only if $\widetilde{V}_*^HC\widetilde{V}_*$ is nonsingular. 
By the equivalence of nonsingulairty and indefiniteness of 
$\widetilde{V}_*^HC\widetilde{V}_*$, we reach the statement (i).

For the statement (ii), we note that according to Part I \cite[Thm~4.4]{2DEVPI}, 
the eigenvalues of $-\widetilde{V}_*^HC\widetilde{V}_*$ are 
$\widetilde{\lambda}_i'(\mu_*)$ and $\widetilde{\lambda}_{i+1}'(\mu_*)$. 
Thus $(\mu_*,\lambda_*)$ is nonsingular if and only if  
$\tilde{\lambda}_i'(\mu_*)\tilde{\lambda}_{i+1}'(\mu_*)\neq0$. 
Since $\widetilde{V}_*^HC\widetilde{V}_*$ is not a definite matrix, 
$\tilde{\lambda}_i'(\mu_*)\tilde{\lambda}_{i+1}'(\mu_*)\neq0$ 
is equivalent to $\tilde{\lambda}_i'(\mu_*)\tilde{\lambda}_{i+1}'(\mu_*)<0$.
\end{proof}

We end the discussion of singularity by the following corollary 
from \Cref{Thm:singularity} and equations \eqref{derivativeFormula2} 
and \eqref{eq:JstarVC}.

\begin{corollary}\label{cor:xprimenotzero} ~
\begin{enumerate}[(i)]
\item If $(\mu_*,\lambda_*,x_*)$ is a nonsingular 2D-eigentriplet,
then $Cx_*\neq0$.
\item
If $(\mu_*,\lambda_*)$ is a nonsingular 2D-eigenvalue, 
$\lambda_*$ is an eigenvalue of $A-\mu_*C$ with multiplicity 1, 
and $x(\mu)$ is the corresponding eigenvector function 
defined in \Cref{Thm:derivativeFormula}, then $x'(\mu_*)\neq0$.
\end{enumerate}
\end{corollary}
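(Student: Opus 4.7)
The plan is to prove each part by combining the characterizations of nonsingularity from \Cref{Thm:singularity} with the identity \eqref{derivativeFormula2}, doing a short case analysis on the multiplicity $k$ of $\lambda_*$ as an eigenvalue of $A-\mu_*C$.

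For part~(i), I would first note that nonsingularity automatically excludes $k\ge 3$ by \Cref{Thm:singularity}\eqref{Thm:singularity:iii}, so only $k=1$ and $k=2$ remain. When $k=1$, \Cref{Thm:singularity}\eqref{Thm:singularity:i} yields $\lambda''(\mu_*)\neq 0$; rewriting \eqref{derivativeFormula2} using Hermiticity of $C$ as $\lambda''(\mu_*)=-2(Cx_*)^H x'(\mu_*)$ shows that $Cx_*=0$ would force $\lambda''(\mu_*)=0$, a contradiction. When $k=2$, \Cref{Thm:singularity}\eqref{Thm:singularity:ii} together with the equivalence \eqref{eq:JstarVC} says that nonsingularity is equivalent to invertibility of the $2\times 2$ matrix $\widetilde{V}_*^H[Cx_*,x_*]$; if $Cx_*=0$, its first column vanishes and the matrix is singular, again a contradiction. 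An alternative uniform argument, which avoids the case split altogether, is to observe directly from \eqref{eq:jacobi} that $Cx_*=0$ makes the $(n+1)$-th column of $J(\mu_*,\lambda_*,x_*)$ identically zero, so $J_*$ is singular regardless of $k$.

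Part~(ii) follows almost immediately from \Cref{Thm:singularity}\eqref{Thm:singularity:i} combined with \eqref{derivativeFormula2}. Since $\lambda_*$ has multiplicity $1$ and $(\mu_*,\lambda_*)$ is nonsingular, the theorem gives $\lambda''(\mu_*)\neq 0$. If instead $x'(\mu_*)=0$, the right-hand side of \eqref{derivativeFormula2} would vanish, contradicting $\lambda''(\mu_*)\neq 0$. Hence $x'(\mu_*)\neq 0$.

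No substantive obstacle is anticipated; this corollary is a bookkeeping consequence of the preceding theorem and the derivative formulas already established. The only care needed is to record that the case $k\ge 3$ is ruled out a priori by the nonsingularity hypothesis, so the two-case analysis in (i) is exhaustive.
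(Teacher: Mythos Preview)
Your proposal is correct and matches the paper's intended approach: the paper does not spell out a proof but simply records that the corollary follows from \Cref{Thm:singularity} together with \eqref{derivativeFormula2} and \eqref{eq:JstarVC}, which is exactly the combination you use in your case analysis. Your alternative uniform argument for part~(i)---that $Cx_*=0$ kills the $(n{+}1)$-th column of $J_*$ directly from \eqref{eq:jacobi}---is a cleaner shortcut than the case split and worth keeping, though it is not what the paper points to.
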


\subsection{Simple and multiple 2D-eigentriplets} 

\Cref{Thm:singularity,Thm:singularity_re} indicate that 
a 2D-eigenvalue $(\mu_*,\lambda_*)$ is {nonsingular} if and only if 
one of the following two cases happen: 
\begin{enumerate}[I:]
\item\label{case:type1nonsingular} 
$\lambda_*$ is a simple 2D-eigenvalue and the corresponding 
analytic eigencurve $\lambda(\mu)$ satisfies $\lambda''(\mu_*)\neq0$.
	
\item\label{case:type2nonsingular} 
$\lambda_*$ is a multiple 2D-eigenvalue with the multiplicity two 
and the corresponding 
two real analytic eigencurves $\widetilde{\lambda}_1(\mu)$ and 
$\widetilde{\lambda}_2(\mu)$ satisfy 
$\widetilde{\lambda}'_1(\mu_*)\widetilde{\lambda}'_2(\mu_*)<0$. 
\end{enumerate}
Cases~\ref{case:type1nonsingular} and \ref{case:type2nonsingular} 
are the cases of practical interests as we have encountered. 
In the rest of this paper, we will concentrate on the convergence 
analysis of the 2DRQI for these two cases. 
We note that the idea for analyzing Case II can be used for 
treating singular multiple 2D-eigenvalues with corresponding 
analyticalized eigencurves having both negative and positive derivatives. 
For more general cases of singular 2D-eigenvalues, the 2DRQI 
needs to be revised to recover second-order convergence rate.  
It is a subject of future study. 

\subsubsection{Properties of nonsingular simple 2D-eigentriplets}
\label{sec:simple2d} 
Let us consider Case I, namely  
$(\mu_*,\lambda_*, x_*)$ be a nonsingular simple 2D-eigentriplet.
Since it is simple, the set of 2D-eigenvectors is of the form
\begin{equation}\label{eq:XstarStructureI}
\mathcal{X}_* = \{\gamma x_* \mid \gamma \in \mathbb{C}, |\gamma|=1\}. 
\end{equation}
Furthermore, for the Jacobian ${J}(\mu_*,\lambda_*,x_{*})$
and its leading $n$-row matrix $\widehat{J}(\mu_*,\lambda_*,x_{*}) 
= [A-\mu_*C-\lambda_*I, \, -Cx_*\,  -x_*]$, 
$\sigma_{\min}(J(\mu_*,\lambda_*,\gamma x_*))$ and 
$\sigma_{n}(\widehat{J}(\mu_*,\lambda_*,\gamma x_*))$ 
are independent of $\gamma$ for any $\gamma \in \mathbb{C}$ and 
$|\gamma|=1$ due to the facts that
\[
J(\mu_*,\lambda_*,\gamma x_*)
= \Diag(I_n, \bar{\gamma} I_2) J(\mu_*,\lambda_*,x_*) \Diag(I_n, {\gamma} I_2)
\]
and
\[
\widehat{J}(\mu_*,\lambda_*,\gamma x_*)
= \widehat{J}(\mu_*,\lambda_*,x_*) \Diag(I_n, {\gamma} I_2).
\]

On the other hand, by the definition of nonsingularity, 
$\widehat{J}(\mu_*,\lambda_*,x_{*})$ is of full row rank and 
has a nullspace of dimension 2.  
By \Cref{Thm:derivativeFormula},   
a basis matrix of the nullspace of 
$\widehat{J}(\mu_*,\lambda_*,x_{*})$
is given by:
\begin{equation}\label{eq:defVhatstar}
	\widehat{V}_* = \left[\begin{array}{cc}
		x_* & x^{\prime}_*/\sqrt{\|x^{\prime}_*\|^2+1}\\
		0&1/\sqrt{\|x^{\prime}_*\|^2+1}\\
		0&0
	\end{array}\right], 
\end{equation}
where $x^{\prime}_* = x^{\prime}(\mu_*)$, and $x(\mu)$ is 
the analyticalized eigenvector function defined in \Cref{Thm:defxmu} 
corresponding to $(\mu_*,\lambda_*)$ and satisfies $x(\mu_*)=x_*$. 
By \Cref{cor:xprimenotzero}, $x^{\prime}_*\neq0$. 
Let $\widetilde{V}_* = [\, x_*,\, x^{\prime}_*/\|x^{\prime}_*\|\,]$, 
then $\widetilde{V}_*$ is well defined, and has 
orthonormal columns by the orthogonality condition \eqref{eq:xHxis0}. 
Let $V_* =\widetilde{V}_* S$, where $S \in \mathbb{C}^{2 \times 2}$ 
is a unitary matrix such that
\[ 
C_* = V_*^HCV_*
= \Diag(c_{1,*},c_{2,*}) \quad \mbox{with}\quad c_{1,*} \geq c_{2,*}.
\] 
The following lemma presents the properties of the $2\times 2$ 
2DRQ $(A_*, C_*) = (V^H_* AV_*, V^H_*C V_*)$ induced by ${V}_*$. 

\begin{lemma} \label{lem:cstar} 
Let $(\mu_*,\lambda_*,x_*)$ be a nonsingular simple 2D-eigentriplet, 
then 
\begin{enumerate}[(i)]  
\item $C_*$ is indefinite.

\item The (1,2)-element $(A_{*})_{1,2} \neq 0$.
\end{enumerate} 
Therefore, the $2\times 2$ 2DRQ $(A_*,C_*)$ has two simple 2D eigenvalues.
\end{lemma}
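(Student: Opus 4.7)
The plan is to work in the orthonormal basis $\widetilde{V}_* = [\,x_*,\; x'_*/\|x'_*\|\,]$ (which spans the same subspace as $V_*$), compute $\widetilde{V}_*^HC\widetilde{V}_*$ and $\widetilde{V}_*^HA\widetilde{V}_*$ entry-by-entry using the derivative identities of \Cref{Thm:derivativeFormula}, and then pass to $C_*$ and $A_*$ through the unitary congruence by $S$. Throughout the analysis, let $\alpha := x_*^H C x'_*$ and $t := \|x'_*\|$.

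For (i), the $(1,1)$ entry of $\widetilde{V}_*^HC\widetilde{V}_*$ vanishes by \eqref{eq:1b}, while the $(1,2)$ entry equals $\alpha/t$. By \eqref{derivativeFormula2} we have $\alpha = -\tfrac{1}{2}\lambda''(\mu_*)$, which is real and nonzero because we are in Case~I, so that \Cref{Thm:singularity}\eqref{Thm:singularity:i} gives $\lambda''(\mu_*)\neq 0$. Consequently $\widetilde{V}_*^HC\widetilde{V}_*$ is a $2\times 2$ Hermitian matrix with vanishing $(1,1)$-entry and nonzero off-diagonal, so its determinant is $-\alpha^2/t^2 < 0$. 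Sylvester's law of inertia applied through the unitary congruence by $S$ then forces $c_{1,*}>0>c_{2,*}$, proving (i).

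For (ii), I would split $A = (A-\mu_*C)+\mu_*C$ and combine the eigenequation $(A-\mu_*C)x_* = \lambda_*x_*$ with identities \eqref{derivativeFormula1} and \eqref{eq:xHxis0} to obtain
\[
\widetilde{V}_*^H(A-\mu_*C)\widetilde{V}_* \;=\; \Diag\!\bigl(\lambda_*,\; \lambda_*+\alpha/t^2\bigr).
\]
Since $C_*$ is diagonal, $(A_*)_{1,2} = (A_*-\mu_*C_*)_{1,2}$, and the above diagonal form together with $s_1^Hs_2=0$ gives $(A_*)_{1,2} = (\alpha/t^2)\,\overline{(s_1)_2}(s_2)_2$, where $s_1,s_2$ are the columns of $S$ (the normalized eigenvectors of $\widetilde{V}_*^HC\widetilde{V}_*$). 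To finish (ii), it suffices to rule out $(s_j)_2 = 0$: reading the first row of the eigenequation $\widetilde{V}_*^HC\widetilde{V}_*\,s_j = c_{j,*}s_j$ yields $(\alpha/t)(s_j)_2 = c_{j,*}(s_j)_1$, and $(s_j)_2=0$ would force $(s_j)_1=0$ since $c_{j,*}\neq 0$ by (i), contradicting $\|s_j\|=1$.

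For the final assertion, I would argue directly within the $2\times 2$ pair $(A_*,C_*)$: the constraint $y^HC_*y=0$ with $\|y\|=1$ fixes $|y_1|,|y_2|$ uniquely since $c_{1,*}>0>c_{2,*}$, and writing $y_1=\rho_1,\; y_2=\rho_2 e^{i\phi}$, the requirement that $\lambda = y^HA_*y$ be real forces $(A_*)_{1,2}e^{i\phi}\in\mathbb{R}$, which admits exactly two phases $\phi$ modulo $2\pi$ because $(A_*)_{1,2}\neq 0$ by (ii). Each phase determines a distinct $(\mu,\lambda)$ by solving the resulting $2\times 2$ real linear system extracted from $(A_*-\mu C_*)y = \lambda y$, yielding precisely two simple 2D-eigentriplets. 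The main technical hurdle will be the algebraic verification of the diagonal identity for $\widetilde{V}_*^H(A-\mu_*C)\widetilde{V}_*$ (which crucially uses that $\alpha$ is real so that the $(2,2)$-entry collapses correctly); once that identity is in hand, the inertia and phase-counting arguments are short.
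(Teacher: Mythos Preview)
Your argument for (i) matches the paper's exactly: both compute $\det(\widetilde{V}_*^HC\widetilde{V}_*)=-\alpha^2/t^2<0$ and transfer indefiniteness through the unitary congruence by $S$.

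For (ii) your route is genuinely different and cleaner. The paper argues by contradiction: if $(A_*)_{1,2}=0$ then $V_*^HAV_*$ and $V_*^HCV_*$ are simultaneously diagonal, so $\widetilde{V}_*^HA\widetilde{V}_*$ and $\widetilde{V}_*^HC\widetilde{V}_*$ commute; equating off-diagonal entries of the two products and invoking further derivative identities (differentiating $x'(\mu)^HAx(\mu)=\mu\,x'(\mu)^HCx(\mu)$ and using $x''^H x=-\|x'\|^2$) eventually forces $x'^{\,H}_*Cx_*=0$, contradicting $\lambda''(\mu_*)\neq 0$. You instead observe directly that $\widetilde{V}_*^H(A-\mu_*C)\widetilde{V}_*$ is already \emph{diagonal}---this drops out of $(A-\mu_*C)x_*=\lambda_*x_*$, Hermiticity, and $x_*^Hx'_*=0$---yielding the explicit formula $(A_*)_{1,2}=(\alpha/t^2)\,\overline{(s_1)_2}(s_2)_2$, after which nonvanishing follows from the eigenvector equation for $\widetilde{V}_*^HC\widetilde{V}_*$. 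Your approach bypasses the second-derivative manipulations entirely and makes the mechanism ($\alpha\neq 0$ plus the shape of the eigenvectors of $\widetilde{C}_*$) transparent; the paper's contradiction argument is longer but does not require identifying the eigenvectors of $\widetilde{C}_*$.

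One slip in your closing paragraph: $y^HA_*y$ is automatically real for Hermitian $A_*$, so that is not what pins down $\phi$. The correct constraint is that the eigenvector equation $(A_*-\mu C_*)y=\lambda y$ admit a \emph{real} $\mu$; eliminating $\lambda$ from the two scalar rows gives $\mu(c_{1,*}-c_{2,*})$ with imaginary part a positive multiple of $\mathrm{Im}\bigl((A_*)_{1,2}e^{i\phi}\bigr)$, which is the source of the condition $(A_*)_{1,2}e^{i\phi}\in\mathbb{R}$. With that fix your phase-counting argument is fine. (The paper does not argue this step at all; it simply cites \cite[Sec.~3]{2DEVPI}.)
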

\begin{proof} 
For the result (i), it is sufficient to show that $\widetilde{C}_* = \widetilde{V}_*^HC\widetilde{V}_*$ is indefinite. 
	Since $x^{H}_*Cx_* = 0$, and $-2x^{H}_*Cx^{\prime}_* = \lambda^{\prime\prime}(\mu_*)$ from \eqref{derivativeFormula2}, we have
	\[
	\widetilde{C}_* = \widetilde{V}_*^HC\widetilde{V}_*= \left[\begin{array}{cc}
		0&\lambda^{\prime\prime}(\mu_*)/(-2\|x^{\prime}_*\|)\\
		\lambda^{\prime\prime}(\mu_*)/(-2\|x^{\prime}_*\|)&
		(x^{\prime}_*)^H C x^{\prime}_*/\|x^{\prime}_*\|^2
	\end{array}\right].
	\]
Consequently,
\[
\det(\widetilde{C}_*)=-(\lambda^{\prime\prime}(\mu_*)/(2\|x^{\prime}_*\|))^2<0,
	\]
	which implies $\widetilde{C}_*$ is indefinite.

For the result(ii), we use the proof by contradiction. 
For brevity, we use $x,x'$, $x''$,$\mu,\lambda,\lambda',\lambda''$ to 
denote $x_*, x'(\mu_*),x''(\mu_*), \mu_*,\lambda_*,\lambda'(\mu_*),
\lambda''(\mu_*)$. Assume that $(A_*)_{1,2} = (V_*^HAV_*)_{1,2} = 0$. 
By the assumption, $V_*^HAV_*$ and $V_*^HCV_*$ are both diagonal. Therefore, $\widetilde{V}_*^HA\widetilde{V}_*$ and $\widetilde{V}_*^HC\widetilde{V}_*$ are simultaneously diagonalized, and then commute: 
	\[
	(\widetilde{V}_*^HA\widetilde{V}_*)(\widetilde{V}_*^HC\widetilde{V}_*)
	=(\widetilde{V}_*^HC\widetilde{V}_*)(\widetilde{V}_*^HA\widetilde{V}_*),
	\]
	i.e., 
	\[
	\begin{bmatrix}
		\frac{x^HAx'\lambda''}{-2\|x'\|^2}& \frac{\lambda \lambda''}{-2\|x'\|}+\frac{x^HAx'x^{\prime H}Cx'}{\|x'\|^3}\\
		\frac{x^{\prime H}Ax'\lambda''}{-2\|x'\|^3}& \frac{x^{\prime H}Ax\lambda''}{-2\|x'\|^2}+\frac{x^{\prime H}Ax'x^{\prime H}Cx'}{\|x'\|^4}
	\end{bmatrix}=\begin{bmatrix}
		\frac{\lambda''x^{\prime H}Ax}{-2\|x'\|^2}& \frac{x^{\prime H}Ax'\lambda''}{-2\|x'\|^3} \\
		\frac{\lambda \lambda''}{-2\|x'\|}+\frac{x^{\prime H}Axx^{\prime H}Cx'}{\|x'\|^3}& \frac{\lambda''x^HAx'}{-2\|x'\|^2}+\frac{x^{\prime H}Ax'x^{\prime H}Cx'}{\|x'\|^4}
	\end{bmatrix}.
	\]
where we use \eqref{derivativeFormula2}. 
By \Cref{Thm:singularity}(i), $\lambda''\neq 0$. 
Thus the above equation can be simplified as: 
\begin{subequations}\label{a21nonzero}
\begin{empheq}[left={}\empheqlbrace]{alignat=2}
& x^HAx' \ \text{is\ real}, \label{a21nonzero1} \\
& \frac{\lambda''x^{\prime H}Ax'}{-2\|x'\|^3}
=\frac{\lambda \lambda''}{-2\|x'\|}+\frac{x^{\prime H}Cx'x^{\prime H}Ax}{\|x'\|^3}.
\label{a21nonzero2} 
\end{empheq}
\end{subequations}
We now show that equation\eqref{a21nonzero2} implies 
$\lambda''=0$, which contradicts \Cref{Thm:singularity}(i). 
	
	By $(A-\mu C-\lambda(\mu) I)x(\mu)=0$ and the orthogonality~\eqref{eq:xHxis0}, we have
	\begin{equation} \label{eq:xpaxequal} 
		x^{\prime}(\mu)^HAx(\mu)=\mu x^{\prime}(\mu)^HCx(\mu).
	\end{equation}
	By taking the derivative of the identity \eqref{eq:xpaxequal} and setting $\mu=\mu_*$, we have
	\[
	x^{\prime H}Ax'+x^{\prime\prime H}Ax
	=x^{\prime H}Cx+\mu x^{\prime\prime H}Cx+\mu x^{\prime H}Cx'.
	\]
	Thus, 
	\begin{align}
		x^{\prime H}Ax'
		&=-x^{\prime\prime H}(\lambda x+\mu Cx)
		+x^{\prime H}Cx+\mu x^{\prime\prime H}Cx+\mu x^{\prime H}Cx' \nonumber \\
		&=-\lambda x^{\prime\prime H}x+x^{\prime H}Cx+\mu x^{\prime H}Cx'  \nonumber \\
		&=\lambda \|x'\|^2+x^{\prime H}Cx+\mu x^{\prime H}Cx',  \label{eq:xpaxp} 
	\end{align}
	where for the last equality, we use the fact that $x^{\prime\prime H}x=-\|x'\|^2$, which is derived from taking the derivative of the orthogonality condition \eqref{eq:xHxis0} and setting $\mu=\mu_*$.  
	
Thus by \eqref{derivativeFormula2}, \eqref{eq:xpaxequal} and \eqref{eq:xpaxp}, 
equation \eqref{a21nonzero2} is equivalent to
	\[
	\frac{(\lambda \|x'\|^2+x^{\prime H}Cx+\mu x^{\prime H}Cx')\lambda''}{-2\|x'\|^3}
	=\frac{\lambda \lambda''}{-2\|x'\|}+\frac{\mu x^{\prime H}Cx'\lambda''}{-2\|x'\|^3}.
	\] 
	By eliminating the common terms on the both side of the equation, we have $x^{\prime H}Cx=0$. By \eqref{derivativeFormula2}, $\lambda''=0$. This completes 
the proof by contradiction.

By the results (i) and (ii), and \cite[Sec.3]{2DEVPI}, we conclude that the 
2DRQ $(A_*, C_*)$ has two simple 2D-eigenvalues.
\end{proof} 

\subsubsection{Properties of nonsingular multiple 2D-eigentriplets}
\label{sec:propmulti2D} 

Now let us consider Case II, namely 
$(\mu_*, \lambda_*,x_*)$ be a nonsingular multiple 2D-eigentriplet.
Here by \Cref{Thm:singularity}, the nonsingularity implies 
the multiplicity of $\lambda_*$ for being an eigenvalue of $A-\mu_*C$ is 2.
By the definition of nonsingularity, 
the matrix $\widehat{J}_* = [A-\mu_*C-\lambda_*I,\, -Cx_*, \, -x_*]$ 
has full row rank and the dimension of the nullspace of $\widehat{J}_*$ is 2. 
If we denote $\widetilde{V}_*$ as an orthonormal basis of 
the eigenspace associated with $\lambda_*$, a basis matrix of the 
nullspace of $\widehat{J}_*$ is given by 
$[\widetilde{V}^T_*,\, 0, \, 0]^T$. 
Let $V_*=\widetilde{V}_*S$, 
where $S\in\mathbb{C}^{2\times2}$ is an unitary matrix, such that 
\begin{equation}\label{def:VstarII}
C_* \equiv V_*^HCV_* = 
\Diag\left(c_{1,*},c_{2,*}\right)
\quad \mbox{with}\quad 
c_{1,*}\geq c_{2,*}.
\end{equation}
Then 
the set of 2D-eigenvectors is of the form 
\begin{equation}\label{eq:XstarStructureII}
\mathcal{X}_* =
\{\gamma_1t_*v_{1,*}+\gamma_2s_*v_{2,*} \mid
\gamma_1, \gamma_2 \in \mathbb{C}, |\gamma_1|=|\gamma_2|=1 \},
\end{equation}
where $V_*=[v_{1,*},\, v_{2,*}]$,
$t_{*}=\sqrt{{-c_{2,*}}/{(c_{1,*}-c_{2,*})}}$,
$s_{*}=\sqrt{{c_{1,*}}/{(c_{1,*}-c_{2,*})}}$. 

By \Cref{Thm:singularity}, for any $x_*\in\mathcal{X}_*$, 
$J(\mu_*,\lambda_*,x_*)$ and $\widehat{J}(\mu_*,\lambda_*,x_*)$ have full rank. 
Thus 
\[ 
\sigma_{\min}(J(\mu_*,\lambda_*,x_*))>0
\quad \mbox{and} \quad
\sigma_{n}(\widehat{J}(\mu_*,\lambda_*, x_*))>0. 
\] 
Since $\mathcal{X}_*$ is 
compact, $\sigma_{\min}(J(\mu_*,\lambda_*,x_*))$ and 
$\sigma_{n}(\widehat{J}(\mu_*,\lambda_*,x_*))$ are continuous functions 
with respect to $x$ according to 
{Weyl's theorem \cite[p.198]{demmel1997applied}, 
we conclude that 
\[
\sigma_{\min,J_*} =
\inf_{x_*\in\mathcal{X}_*}\sigma_{\min}(J(\mu_*,\lambda_*,x_*))>0
\quad \mbox{and} \quad
\sigma_{n,\widehat{J}_*} =
\inf_{x_*\in\mathcal{X}_*}\sigma_{n}(\widehat{J}(\mu_*,\lambda_*, x_*))>0.
\]

The following lemma presents the properties of the 
$2\times 2$ 2DRQ $(A_*,C_*) = (V_*^HAV_*, V_*^HCV_*)$ 
induced by ${V}_*$. 

\begin{lemma}\label{lem:cstarII}
Let $(\mu_*,\lambda_*, x_*)$ be a nonsingular multiple 
2D-eigenvalue,  then
\begin{enumerate}[(i)]
\item $C_*$ is indefinite;
\item The $(1,2)$-element $(A_*)_{1,2} =0$. 
\end{enumerate} 
Therefore, there are exactly one multiple 2D-eigenvalue of $(A_*, C_*)$.
In addition, we have  
\begin{equation} \label{eq:sigma1C} 
\sigma_{1,C_*} \equiv \inf_{x\in\mathcal{X}_*}\|Cx\|>0 
\quad \mbox{and} \quad
\sigma_{1,VC_*}\equiv \inf_{x\in\mathcal{X}_*}\|V_*^HCx\|
=\sqrt{-c_{1,*}c_{2,*}}>0.
\end{equation} 
\end{lemma}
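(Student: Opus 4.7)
My plan is to exploit the structural fact that all columns of $\widetilde{V}_*$ lie in a single $\lambda_*$-eigenspace of $A-\mu_*C$; this collapses both (i) and (ii) into short identities, after which the remaining claims follow by elementary computation. For (i), I invoke \Cref{Thm:singularity_re}(i): nonsingularity of the multiple 2D-eigenvalue $(\mu_*,\lambda_*)$ implies $\widetilde{V}_*^HC\widetilde{V}_*$ is indefinite. Since $V_*=\widetilde{V}_*S$ with $S$ unitary, $C_*=S^H(\widetilde{V}_*^HC\widetilde{V}_*)S$ shares the eigenvalues of $\widetilde{V}_*^HC\widetilde{V}_*$ and is therefore indefinite as well. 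In particular $c_{1,*}>0>c_{2,*}$, so the constants $t_*$ and $s_*$ in \eqref{eq:XstarStructureII} are well defined and positive.

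For (ii), I will start from $(A-\mu_*C)\widetilde{V}_*=\lambda_*\widetilde{V}_*$, which gives $A\widetilde{V}_*=\mu_*C\widetilde{V}_*+\lambda_*\widetilde{V}_*$. Multiplying by $S^H\widetilde{V}_*^H$ on the left and by $S$ on the right produces the key identity
\[
A_* \;=\; \mu_* C_* + \lambda_* I,
\]
from which $(A_*)_{1,2}=0$ is immediate because $C_*$ is diagonal. The same identity yields $A_*-\mu_*C_*=\lambda_* I$, so any 2D-eigentriplet $(\mu,\lambda,x)$ of $(A_*,C_*)$ must satisfy $((\mu_*-\mu)C_*-(\lambda-\lambda_*)I)x=0$. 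If $\mu\neq\mu_*$, then $x$ would be an eigenvector of the diagonal matrix $C_*$, hence $x\in\{e_1,e_2\}$; but then $x^HC_*x=c_{i,*}\neq 0$ would contradict \eqref{eq:1b}. Therefore $(\mu_*,\lambda_*)$ is the unique 2D-eigenvalue of $(A_*,C_*)$, and it has multiplicity two as an eigenvalue of $A_*-\mu_*C_*=\lambda_* I$, confirming that it is a multiple 2D-eigenvalue.

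Finally, for the estimates in \eqref{eq:sigma1C}, $\mathcal{X}_*$ is the continuous image of the compact torus $\{(\gamma_1,\gamma_2):|\gamma_1|=|\gamma_2|=1\}$ and is therefore compact. Each $x\in\mathcal{X}_*$ yields a 2D-eigentriplet $(\mu_*,\lambda_*,x)$ at the nonsingular 2D-eigenvalue $(\mu_*,\lambda_*)$, so \Cref{cor:xprimenotzero}(i) gives $Cx\neq 0$; continuity of $x\mapsto\|Cx\|$ on this compact set then forces $\sigma_{1,C_*}>0$. For $\sigma_{1,VC_*}$, the identity $V_*^HCv_{i,*}=c_{i,*}e_i$ (the $i$th column of $C_*$) reduces the computation to
\[
\|V_*^HCx\|^2 \;=\; t_*^2\, c_{1,*}^2 + s_*^2\, c_{2,*}^2,
\]
which is independent of $\gamma_1,\gamma_2$; substituting the explicit values of $t_*^2$ and $s_*^2$ collapses this to $-c_{1,*}c_{2,*}>0$, simultaneously proving the equality and the strict positivity. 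No step poses a genuine obstacle; the one point worth care is recognizing that the identity $A_*=\mu_*C_*+\lambda_*I$, forced by the multiple case, is exactly what makes this lemma much shorter than its simple-case counterpart \Cref{lem:cstar}.
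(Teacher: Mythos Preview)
Your proof is correct and follows essentially the same route as the paper: both invoke \Cref{Thm:singularity_re}(i) for (i), derive $A_*=\mu_*C_*+\lambda_*I$ for (ii), and compute $\|V_*^HCx\|$ explicitly for \eqref{eq:sigma1C}. The only minor deviations are that you argue uniqueness of the 2D-eigenvalue directly (the paper just cites Part~I), and you obtain $\sigma_{1,C_*}>0$ via compactness together with \Cref{cor:xprimenotzero}(i), whereas the paper deduces it more simply from the inequality $\|V_*^HCx\|\le\|Cx\|$ once $\sigma_{1,VC_*}>0$ is in hand.
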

\begin{proof}
The result (i) is concluded from \Cref{Thm:singularity_re}(i). 
	
For result(ii), note that
\[AV_* = \mu_*CV_*+\lambda_*V_*.\]
Multiplying $V_*^H$ on the left and we obtain
\[V_*^HAV_* = \mu_*\Diag\left(c_{1,*}, c_{2,*}\right)+\lambda_*I.\]
Thus $V_*^HAV_*$ is diagonal and $\left(V_*^HAV_*\right)_{1,2}=0$. 

By the results (i) and (ii), and Section 3 in Part I \cite{2DEVPI}, 
we conclude that the 2DRQ $(A_*, C_*)$ has one multiple 2D-eigenvalue. 

For the two identities in \eqref{eq:sigma1C},
since $\|V_*^HCx\|\leq \|V_*^H\|\|Cx\|=\|Cx\|$, we only need to prove 
the second identity. 
Note that $x\in\mathcal{X}_*$ implies there exists 
$z\in\mathbb{C}^{2}$ satisfying $x = V_*z$, $\|z\|=1$ and 
$z^HC_*z=0$. Denote $z=[z_1, z_2]^T$. Straight calculation shows
\[
\begin{bmatrix}
|z_1|\\
|z_2|
\end{bmatrix} = \frac{1}{\sqrt{c_{1,*}-c_{2,*}}}\begin{bmatrix}
\sqrt{-c_{2,*}}\\
\sqrt{c_{1,*}}
\end{bmatrix},\]
and thus
\[
\|V_*^HCx\| = \|C_*z\| = \sqrt{-c_{1,*}c_{2,*}}>0.
\]
This completes the proof. 
\end{proof} 

Compare the result (ii) in \Cref{lem:cstar,lem:cstarII}, 
we foresee that the convergence behavior of the 2DRQI is 
different for simple and multiple 2D-eigenvalues. This leads to 
different treatments of the convergence analysis of the 2DRQI 
in \Cref{sec:converge_analysis_I,sec:converge_analysis_II}.


\section{Recap of the 2D Rayleigh quotient iteration}\label{sec:rqi}
In this section, we recap the key steps of the 2DRQI presented 
in Section 3 of Part II \cite{2DEVPII}.  
Let $(\mu_k,\lambda_k,x_k)$ be the $k$th approximation of 
a 2D-eigentriplet $(\mu_*,\lambda_*,x_*)$. 
Assume that the Jacobian $J_k \equiv J(\mu_k, \lambda_k, x_k)$ 
is nonsingular, 
see the justification in \Cref{Thm:behavior,Thm:behaviorII}
when $(\mu_k,\lambda_k,x_k)$ is sufficiently close to $(\mu_*, \lambda_*, x_*)$. 
	
Let $\widehat{J}_k = \widehat{J}(\mu_k,\lambda_k,x_k)
= [A-\mu_kC-\lambda_kI,\, -Cx_k, \, -x_k]$ be the first $n$ rows of $J_k$,
and 
$\begin{bmatrix}\widetilde{V}_k\\ R\end{bmatrix}$
be a basis matrix of the nullspace of $\widehat{J}_k$. 
Then the projection matrix $V_k$ of the 2DRQI is defined as 
\begin{equation}\label{eq:vkdef2}
V_k = \mbox{orth}(\widetilde{V}_k)
\end{equation}
and 
\begin{equation}\label{eq:determineVk}
V_k^HCV_k = \Diag(c_{1,k},c_{2,k})
\quad \mbox{with} \quad 
c_{1,k}\geq c_{2,k}. 
\end{equation}
Correspondingly, we have the 2DRQ:
\begin{equation} \label{eq:akck}
(A_k, C_k) \equiv ({V}^H_k A {V}_k, {V}^H_k C {V}_k). 
\end{equation}

In \Cref{thm:Ckindef,lem:CkindefII},  it will be shown that 
when $(\mu_k,\lambda_k,x_k)$ is sufficiently close to 
$(\mu_*, \lambda_*, x_*)$, $C_k$ is indefinite.
Consequently, by Section~3 of Part I \cite{2DEVPI}, 
if $a_{12,k}\neq 0$, where $a_{ij,k}$ is the $(i,j)$ element of $A_k$,  
the $2\times 2$ 2DEVP of 2DRQ  $(A_k, C_k)$: 
\begin{subequations}\label{projectEVP}
\begin{empheq}[left={}\empheqlbrace]{alignat=2}
		(A_k-\nu C_k-\theta I)z &= 0, \label{projectEVP1} \\
		z^HC_kz &= 0, \label{projectEVP2} \\
		z^Hz &= 1, \label{projectEVP3} 
	\end{empheq}
\end{subequations}
has two distinct 2D-eigentriplets
\begin{equation}\label{def:2deigsimple}
(\nu(\alpha_{k,i}),\theta(\alpha_{k,i}),z(\alpha_{k,i})) 
\quad \mbox{for} \quad i = 1,2,  
\end{equation} 
where $\alpha_{k,i} = \pm {|a_{12,k}|}/{a_{12,k}}$, and   
\begin{equation}\label{eq:mulamalpha}
\nu(\alpha) = \frac{z(\alpha)^HC_kA_kz(\alpha)}{\|C_kz(\alpha)\|^2}, \quad
\theta(\alpha) = z(\alpha)^HA_kz(\alpha),\quad
z(\alpha) = \begin{bmatrix}
\sqrt{\frac{-c_{2,k}}{c_{1,k}-c_{2,k}}}\\
\alpha\sqrt{\frac{c_{1,k}}{c_{1,k}-c_{2,k}}}
\end{bmatrix}.
\end{equation}
Otherwise, if $a_{12,k} = 0$, the 2D-eigentriplet of 
the $2\times 2$ 2DEVP \eqref{projectEVP} is 
\begin{equation} \label{eq:2deigsmultiple} 
(\nu_1,\theta_1,z(\alpha))\equiv 
\left(\frac{a_{11,k}-a_{22,k}}{c_{1,k}-c_{2,k}}, 
\frac{a_{22,k}c_{1,k}-a_{11,k}c_{2,k}}{c_{1,k}-c_{2,k}}, z(\alpha)\right),
\end{equation} 
where $\alpha \in \mathbb{C}$ and $|\alpha|=1$.

From the 2D-eigentriplets \eqref{def:2deigsimple} and \eqref{eq:2deigsmultiple} 
of the 2DRQ $(A_k, C_k)$, 
when $a_{12,k}\neq 0$, by \eqref{def:2deigsimple}
the following 2D Ritz triplet defines  
the $k+1$st approximate 2D-eigentriplet of $(A,C)$:  
\begin{equation}  \label{eq:update1a}
\mu_{k+1}=\nu(\alpha_{k,j}), \quad
\lambda_{k+1}=\theta(\alpha_{k,j}) \quad \mbox{and} \quad
x_{k+1} = V_kz(\alpha_{k,j}),
\end{equation}
where the index $j$ is the one such that 
$|\mu_k-\nu(\alpha_{k,j})|+|\lambda_k-\theta(\alpha_{k,j})|$ is smaller 
for $j = 1, 2$. 
Otherwise, when $a_{12,k} = 0$, by \eqref{eq:2deigsmultiple},  
the $k+1$st approximate 2D-eigentriplet of $(A,C)$ is given by
\begin{equation}  \label{eq:update1b}
\mu_{k+1}= \nu_1, \quad
\lambda_{k+1}= \theta_1 
\quad \mbox{and} \quad
x_{k+1} = V_kz(1), 
\end{equation}
where for brevity, we choose $\alpha=1$.



\section{Results from matrix perturbation analysis} 
\label{sec:perturbation} 

In this section, we recall several known results 
on matrix perturbation analysis and {derive a couple of 
new results} that will be used for the convergence analysis of the 2DRQI.  

The canonical angles provide a useful tool to measure the distance 
between two subspaces, see e.g.,~\cite[Sec 4.2.1]{Stewart1990Sun}.
Let $X, Y\in \mathbb{C}^{n\times k}$ and have orthonormal columns, $k \leq n$.  
Then the $k$ canonical angles $\theta_j(\mathcal X,\mathcal Y)$ 
between the range spaces of 
$\mathcal X =\mathcal{R}(X)$ and
$\mathcal Y = \mathcal{R}(Y)$ are defined by
\begin{equation}\label{eq:indv-angles-XY}
0 \le \theta_j(\mathcal X,\mathcal Y)
:=\arccos\sigma_j\le \frac {\pi}2\quad\mbox{for $1\le j\le k$},
\end{equation}
where $\sigma_1\ge\cdots\ge\sigma_k$ are singular values of
the matrix $Y^{H}X$, and are in ascending order
$$
\theta_1(\mathcal{X},\mathcal{Y} ) \le\cdots\le
\theta_k(\mathcal{X},\mathcal{Y}).
$$
Let 
\begin{equation}\label{eq:mat-angles-XY}
\Theta(\mathcal X,\mathcal Y)= \mbox{Diag}(\theta_1(\mathcal X,\mathcal Y),
\ldots,\theta_k(\mathcal X,\mathcal Y)).
\end{equation}
It is well-known that for any unitarily invariant norm $\|\cdot\|_{\rm UI}$,
it holds that both $\|\Theta({X},{Y})\|_{\rm UI}$ 
and $\|\sin\Theta({ X},{ Y})\|_{\rm UI}$ are
unitarily invariant metrics on the Grassmann manifold 
$\mathbf{Gr}(k,\mathbb{C}^n)$ (see e.g.,~\cite[Thm.4.10,p.93]{Stewart1990Sun}).
Note that since the canonical angles are independent of 
the basis matrices $X$ and $Y$, for convenience, we use the notation
$\Theta({X},{Y})$ interchangeably with $\Theta(\mathcal X,\mathcal Y)$.

The following result from \cite[p.\,92, Thm~4.5]{Stewart1990Sun} 
expresses the metric $\|\sin\Theta(U,V)\|$
in terms of the distance between a vector $x$ and a closed set $Y$: 
\[
\dist(x,Y)=\min\{\|x-y\| \mid y\in Y\}. 
\]
\begin{theorem}[\cite{Stewart1990Sun}] \label{Thm:sinthetais}
Assume $U,V\in\mathbb{C}^{n\times \ell}$ are of full column rank, 
then 
\[
\|\sin\Theta(U,V)\|=\max\left\{\max\limits_{u\in\Span\{U\}, \|u\|=1}
\dist(u,\Span\{V\}), \max\limits_{v\in\Span\{V\}, \|v\|=1}
\dist(v,\Span\{U\})\right\}.
\]
\end{theorem}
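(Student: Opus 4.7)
The plan is to prove this by converting both sides into expressions involving the singular values of $V^HU$ and showing they coincide. Here $\|\cdot\|$ denotes the spectral norm. Without loss of generality, I will assume $U$ and $V$ have orthonormal columns, since both sides of the identity depend only on the subspaces $\Span\{U\}$ and $\Span\{V\}$ (the canonical angles are basis-invariant, and so are the two sup-distances on the right).

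By the definition \eqref{eq:indv-angles-XY}--\eqref{eq:mat-angles-XY}, if $\sigma_1\ge\cdots\ge\sigma_\ell$ are the singular values of $V^HU$, then the canonical angles are $\theta_j=\arccos\sigma_j$, and they are sorted in ascending order, so the largest angle is $\theta_\ell=\arccos\sigma_\ell$. Therefore
\[
\|\sin\Theta(U,V)\|=\sin\theta_\ell=\sqrt{1-\sigma_\ell^2}.
\]
The goal reduces to showing that each of the two inner $\max$-distances on the right-hand side equals $\sqrt{1-\sigma_\ell^2}$.

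Next I would compute one of the two $\max$-distances explicitly; the other follows by symmetry since the singular values of $V^HU$ and $U^HV$ coincide. For any unit vector $u\in\Span\{U\}$, write $u=Uc$ with $\|c\|=1$. The orthogonal projector onto $\Span\{V\}$ is $VV^H$, so the nearest point in $\Span\{V\}$ to $u$ is $VV^HUc$, and
\[
\dist(u,\Span\{V\})^2=\|(I-VV^H)Uc\|^2=\|Uc\|^2-\|V^HUc\|^2=1-\|V^HUc\|^2.
\]
Maximizing over $\|c\|=1$ is equivalent to minimizing $\|V^HUc\|^2$ over the unit sphere, which by the variational characterization of singular values equals $\sigma_\ell^2$. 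Hence
\[
\max_{u\in\Span\{U\},\,\|u\|=1}\dist(u,\Span\{V\})=\sqrt{1-\sigma_\ell^2}=\|\sin\Theta(U,V)\|.
\]
The same argument with $U$ and $V$ interchanged, using that $U^HV$ and $V^HU$ share the same singular values, gives the identical value for the second term. The maximum of the two is then $\|\sin\Theta(U,V)\|$ as desired.

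There is no genuinely hard step here; the exercise is mostly bookkeeping. The one subtle point worth stating carefully is the identification of the largest canonical angle with the index $\ell$ (since the angles are sorted in ascending order, so the \emph{smallest} singular value of $V^HU$ corresponds to the \emph{largest} angle), which is what links the min in the variational principle on the singular-value side to the max in the dist-formulation on the subspace side. Once that alignment is made explicit, the identity drops out immediately.
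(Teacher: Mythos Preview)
Your argument is correct and is the standard textbook derivation. Note, however, that the paper does not actually prove this theorem: it is quoted from \cite[p.\,92, Thm~4.5]{Stewart1990Sun} and stated without proof, so there is nothing in the paper to compare against. Your write-up would serve perfectly well as a self-contained justification; the only cosmetic point is that the reduction to orthonormal $U,V$ is not strictly ``without loss of generality'' but rather a consequence of the basis-invariance you already state, so you might phrase it as ``it suffices to assume'' rather than WLOG.
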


By \Cref{Thm:sinthetais}, we have the following result.  

\begin{lemma}\label{lemma:sinthetatwovec}
Assume $u,v\in\mathbb{C}^n, \|u\| = \|v\| = 1$, then 
\[ \|\sin\Theta(u,v)\| =  \sqrt{1-|u^Hv|^2}.\]
\end{lemma}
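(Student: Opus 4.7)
The plan is to prove this directly from the definition of canonical angles, since $u$ and $v$ are single unit vectors, which corresponds to the $k = \ell = 1$ case in the setup \eqref{eq:indv-angles-XY}--\eqref{eq:mat-angles-XY}. In that case there is only one canonical angle $\theta_1(u,v)$, and everything reduces to a one-line computation; there is no real obstacle.

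More concretely, I would first observe that $v^H u \in \mathbb{C}$ is a scalar, so its only singular value is $\sigma_1(v^H u) = |v^H u|$. By definition \eqref{eq:indv-angles-XY}, this yields $\cos \theta_1(u,v) = |v^H u| = |u^H v|$, and hence
\[
\sin \theta_1(u,v) = \sqrt{1 - |u^H v|^2}.
\]
Since $\Theta(u,v)$ is the $1\times 1$ matrix whose only entry is $\theta_1(u,v)$, the matrix $\sin \Theta(u,v)$ is just $\sin \theta_1(u,v)$, so $\|\sin \Theta(u,v)\| = \sqrt{1 - |u^H v|^2}$, independent of the choice of unitarily invariant norm.

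Alternatively, one could invoke \Cref{Thm:sinthetais} directly: for any unit vector $\alpha u \in \Span\{u\}$ with $|\alpha|=1$, the closest point in $\Span\{v\}$ is $\alpha(v^H u)v$, giving
\[
\dist(\alpha u, \Span\{v\})^2 = \|\alpha u - \alpha (v^H u) v\|^2 = 1 - |v^H u|^2,
\]
and symmetrically for vectors in $\Span\{v\}$. Taking the maximum in \Cref{Thm:sinthetais} yields the same conclusion. I would present the first (definition-based) argument since it is shorter, and mention the \Cref{Thm:sinthetais} route only if I want to make the connection to the theorem explicit.
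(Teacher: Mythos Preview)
Your proposal is correct. Your primary route---reading off the single canonical angle directly from the definition \eqref{eq:indv-angles-XY}, since $v^Hu$ is a scalar with singular value $|u^Hv|$---is more elementary than what the paper does: the paper derives the lemma from \Cref{Thm:sinthetais}, computing $\dist(\gamma u,\Span\{v\}) = \|(I-vv^H)u\| = \sqrt{1-|u^Hv|^2}$ exactly as in your alternative argument. So your second route coincides with the paper's proof, while your first route bypasses \Cref{Thm:sinthetais} entirely. The definition-based argument is shorter and perfectly rigorous here; the only reason to prefer the paper's approach is that the lemma is stated in the paper immediately after \Cref{Thm:sinthetais} as a corollary of it, so invoking that theorem keeps the narrative flow.
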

\begin{proof}
We only need to note that
\begin{align*}
\max\limits_{z\in\Span \{u\}, \|z\|=1}\dist(z,\Span\{v\})
& =\max\limits_{ \|\gamma\|=1}\dist(\gamma u,\Span\{v\})\\
 &=\dist(u,\Span\{v\}) =\| (I- vv^H)u\| =\sqrt{1-|u^Hv|^2}.
\end{align*} 
\end{proof}

The next theorem shows that for two matrices $U,V$ such that 
$\|U-V\|$ is small, 
when one of them has orthonormal columns, 
$\|\sin\Theta(U,V)\|$ will also be small. 

\begin{theorem}\label{Thm:lu}
Let $U,V \in\mathbb{C}^{n\times l}$. Assume $U$ has orthonormal 
columns, and $\|U-V\|\leq\frac{1}{2}$, then $\|\sin\Theta(U,V)\|\leq 2\|U-V\|$. 
\end{theorem}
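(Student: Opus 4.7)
The plan is to apply \Cref{Thm:sinthetais} directly, reducing the bound on $\|\sin\Theta(U,V)\|$ to two distance computations, one from a unit vector in $\Span\{U\}$ to $\Span\{V\}$ and the reverse.

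First I would observe that $V$ has full column rank, so $\Theta(U,V)$ is well defined in the sense used by \Cref{Thm:sinthetais}. Indeed, for any unit vector $z\in\mathbb{C}^l$, orthonormality of $U$ gives $\|Uz\|=1$, hence
\[
\|Vz\|\ge \|Uz\|-\|(U-V)z\|\ge 1-\|U-V\|\ge \tfrac{1}{2}>0,
\]
so $V$ is injective as a linear map, and in addition $\|w\|\le 2\|Vw\|$ for all $w\in\mathbb{C}^l$.

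Next I would bound each of the two maxima in \Cref{Thm:sinthetais}. For the first, any unit vector in $\Span\{U\}$ has the form $u=Uz$ with $\|z\|=1$, so
\[
\dist(u,\Span\{V\})\le \|u-Vz\|=\|(U-V)z\|\le \|U-V\|.
\]
For the second, any unit vector $v\in\Span\{V\}$ can be written $v=Vw$ with $\|Vw\|=1$. By the lower bound from the previous paragraph, $\|w\|\le 2$, hence
\[
\dist(v,\Span\{U\})\le \|v-Uw\|=\|(V-U)w\|\le 2\|U-V\|.
\]
Taking the maximum of the two bounds yields $\|\sin\Theta(U,V)\|\le 2\|U-V\|$.

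The proof is essentially a direct application of \Cref{Thm:sinthetais} plus the elementary observation that the forward operator $V$ is invertible on its range with norm of the inverse bounded by $2$; no perturbation-theoretic machinery (QR or polar decomposition) is required. The only place that needs any care is verifying the bound $\|w\|\le 2\|Vw\|$, and the assumption $\|U-V\|\le \tfrac{1}{2}$ is exactly what is needed to obtain the constant $2$ in the final inequality.
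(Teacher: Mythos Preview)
Your proof is correct and follows essentially the same route as the paper: apply \Cref{Thm:sinthetais}, bound the $U\to V$ direction trivially, and bound the $V\to U$ direction by showing $\|w\|\le (1-\epsilon)^{-1}\le 2$ when $\|Vw\|=1$. The only minor differences are that you make the full-column-rank of $V$ explicit (which the paper leaves implicit) and you obtain the bound on $\|w\|$ via the triangle inequality rather than expanding $\|Uz+Ez\|^2$; both lead to the same constant.
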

\begin{proof}
Since $U$ has orthonormal columns, we have 
\begin{equation} \label{eq:bd1} 
\begin{aligned}
\max\limits_{u\in\Span\{U\}, \|u\|=1}\dist(u,\Span\{V\})
&=\max\limits_{\|z\|=1}\dist(Uz,\Span\{V\})\\
&\leq \max\limits_{\|z\|=1}\|(U-V)z\| = \epsilon,
\end{aligned}
\end{equation} 
where $\epsilon=\|U-V\|$.  
On the other hand, 
\begin{equation}  \label{eq:bd2} 
\begin{aligned}
\max\limits_{v\in\Span \{V\}, \|v\|=1}\dist(v,\Span\{U\})
&=\max\limits_{\|Vz\|=1}\dist(Vz,\Span U)\\
&\leq \max\limits_{\|Vz\|=1}\|(U-V)z\|
\leq \max\limits_{\|Vz\|=1}\epsilon\|z\|.
\end{aligned}
\end{equation} 
Denote $E=V-U$, 
then we have
\begin{align*}
1 & = \|Uz+E z\|^2 = \|z\|^2+\|E z\|^2+2\Real(z^HU^HE z) \\
  & \geq \|z\|^2+\|E z\|^2-2\|z\|\|E z\|
=(\|z\|-\|Ez\|)^2 \geq (1-\epsilon)^2\|z\|^2.
\end{align*}
Thus $\|z\|\leq \frac{1}{1-\epsilon}$, which implies 
\begin{equation} \label{eq:bd3}
\max\limits_{\|Vz\|=1}\epsilon\|z\|\leq 
\frac{\epsilon}{1-\epsilon}\leq 2\epsilon. 
\end{equation} 
The proof is completed by 
combining \eqref{eq:bd1}, \eqref{eq:bd2}, \eqref{eq:bd3} and
\Cref{Thm:sinthetais}.
\end{proof}

The following result~\cite[lemma~4.1]{Zhang2015} relates 
$\|\sin\Theta\|$ metric and usual $\|\cdot\|$.

\begin{lemma}\label{lemma:Zhang}
Let $U,V$ be $n\times \ell$ matrices with orthonormal columns, 
then there exists a unitary matrix $Z\in\mathbb{C}^{\ell\times \ell}$, 
such that
\[
\|\sin\Theta(U,V)\|\leq \|U-VZ\|\leq\sqrt{2}\|\sin\Theta(U,V)\|.
\]
\end{lemma}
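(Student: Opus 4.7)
The plan is to construct the aligning unitary $Z$ from a singular value decomposition of the cross-Gramian $U^H V$ and then express both $\|\sin\Theta(U,V)\|$ and $\|U-VZ\|$ in terms of the same singular values. Specifically, I would write $U^H V = Q\Sigma P^H$, where $Q, P \in \mathbb{C}^{\ell \times \ell}$ are unitary and $\Sigma = \diag(\sigma_1,\ldots,\sigma_\ell)$ with $\sigma_i \in [0,1]$ (the inclusion follows because $U$ and $V$ have orthonormal columns, so $\|U^H V\|_2 \le 1$). By the definition of canonical angles in \eqref{eq:indv-angles-XY}, the singular values of the diagonal matrix $\sin\Theta(U,V)$ are exactly $\sqrt{1-\sigma_i^2}$.

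The natural choice is $Z := PQ^H$, the unitary polar factor that maximally aligns $VZ$ with $U$. A direct computation then yields $U^H VZ = Q\Sigma Q^H$, which is Hermitian positive semidefinite with eigenvalues $\sigma_i$. Expanding
\[
(U - VZ)^H (U - VZ) = 2I - Q\Sigma Q^H - Q\Sigma Q^H = 2\bigl(I - Q\Sigma Q^H\bigr),
\]
I read off that the singular values of $U-VZ$ are $\sqrt{2(1-\sigma_i)}$.

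The bound then reduces to the elementary scalar identity $1-\sigma_i^2 = (1-\sigma_i)(1+\sigma_i)$ combined with $1+\sigma_i \in [1,2]$, which gives $1-\sigma_i \le 1-\sigma_i^2 \le 2(1-\sigma_i)$. Taking square roots yields, for each $i$, $\sqrt{1-\sigma_i^2} \le \sqrt{2(1-\sigma_i)} \le \sqrt{2}\sqrt{1-\sigma_i^2}$, i.e., a termwise domination of the ordered singular values of $\sin\Theta(U,V)$ by those of $U-VZ$ and in turn by $\sqrt{2}$ times them. Since a unitarily invariant norm is a symmetric gauge function of the singular value vector, this termwise domination transfers directly to the matrix norms and produces the desired two-sided inequality.

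The main obstacle is the final transfer step when $\|\cdot\|$ is an unspecified unitarily invariant norm: one must invoke the classical monotonicity of symmetric gauge functions to pass from termwise bounds on ordered singular values to bounds on the norm itself. If, as is natural from the rest of the paper, the intended norm is the spectral or Frobenius norm, this step is immediate from $\|M\|_2 = \max_i s_i(M)$ or $\|M\|_F^2 = \sum_i s_i(M)^2$, and no auxiliary machinery is needed.
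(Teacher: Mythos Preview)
Your argument is correct. The choice $Z=PQ^{H}$ from the SVD $U^{H}V=Q\Sigma P^{H}$ is exactly the unitary polar factor of $V^{H}U$, and your computation of the singular values of $U-VZ$ as $\sqrt{2(1-\sigma_i)}$ together with the scalar sandwich $1-\sigma_i\le 1-\sigma_i^{2}\le 2(1-\sigma_i)$ yields the two-sided bound for the spectral norm immediately.

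There is nothing to compare against in the paper itself: the authors do not prove this lemma but simply quote it from \cite[Lemma~4.1]{Zhang2015}. Your proof is the standard one and is complete for the spectral norm, which is the norm used throughout the paper. Your remark about needing monotonicity of symmetric gauge functions for a general unitarily invariant norm is accurate but unnecessary here.
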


The following is the well-known $\sin\Theta$ theorem due to
Davis and Kahan~\cite{Davis1970}.

\begin{theorem}[\cite{Davis1970}]\label{Thm:sinTheta}
        Let $A$ and $A+H$ satisfy
        \[ \begin{bmatrix}
                X_1^H\\
                X_2^H
        \end{bmatrix}A\begin{bmatrix}
                X_1&X_2
        \end{bmatrix}=\Diag(A_1,A_2),\quad \begin{bmatrix}
        Y_1^H\\
        Y_2^H
\end{bmatrix}(A+H)\begin{bmatrix}
Y_1& Y_2
\end{bmatrix}=\Diag(L_1,L_2),\]
where $[X_1,X_2]$ and $[Y_1,Y_2]$ are unitary with
$X_1,Y_1\in\mathbb{C}^{n\times k}$. Let
\[
R = (A+H)X_1-X_1A_1,
\]
{where $A_1\in\mathbb{C}^{k\times k}$}.
If there exists $\delta>0$ and an interval $[\alpha,\beta]$, such that
\[
\Lambda(A_1)\subseteq [\alpha,\beta],
\qquad
\Lambda(L_2)\subseteq \mathbb{R}\setminus(\alpha-\delta,\beta+\delta),
\]
where $\Lambda(X)$ denotes the set of eigenvalues of the matrix $X$, then
\[
\|\sin\Theta\left(X_1,Y_1\right)\|\leq\frac{\|R\|}{\delta}.
\]
\end{theorem}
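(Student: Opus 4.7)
The plan is a Sylvester-equation reduction, the standard route to the Davis-Kahan $\sin\Theta$ theorem. First, I would use that since $[X_1,X_2]$ and $[Y_1,Y_2]$ are unitary, the singular values of the block $Y_2^H X_1$ coincide with $\sin\theta_j(X_1,Y_1)$, so $\|\sin\Theta(X_1,Y_1)\|=\|Y_2^H X_1\|$ in the spectral norm (and in fact in every unitarily invariant norm). This reduces the problem to bounding $\|Y_2^H X_1\|$.

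Next, I would derive a Sylvester equation for $W := Y_2^H X_1$. Multiplying the residual identity $R=(A+H)X_1-X_1A_1$ on the left by $Y_2^H$ and using $Y_2^H(A+H)=L_2 Y_2^H$, which follows from the block-diagonalization of $A+H$, gives
$$L_2 W - W A_1 = Y_2^H R.$$
The separation hypothesis $\Lambda(A_1) \subseteq [\alpha,\beta]$ and $\Lambda(L_2) \cap (\alpha-\delta, \beta+\delta) = \emptyset$ forces every eigenvalue $\lambda_i(L_2) - \lambda_j(A_1)$ of the Sylvester operator $\mathcal{T}(W) := L_2 W - W A_1$ to have magnitude at least $\delta$, so $\mathcal{T}$ is invertible.

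The main obstacle is obtaining the operator-norm bound $\|\mathcal{T}^{-1}\| \leq 1/\delta$ in the spectral norm. A clean route is to simultaneously diagonalize $L_2 = U_2 \Lambda_2 U_2^H$ and $A_1 = U_1 \Lambda_1 U_1^H$, which turns the Sylvester equation into the entrywise form $\tilde W_{ij}((\Lambda_2)_{ii} - (\Lambda_1)_{jj}) = \tilde C_{ij}$ for $\tilde W = U_2^H W U_1$ and $\tilde C = U_2^H Y_2^H R\, U_1$. For the Frobenius norm the bound $\|\tilde W\|_F \le \|\tilde C\|_F/\delta$ is immediate; for the spectral norm (and all unitarily invariant norms) it is the classical Davis-Kahan-Sylvester lemma, established via an integral representation of $\mathcal{T}^{-1}$. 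Combining with $\|Y_2^H R\| \le \|Y_2^H\|\,\|R\| = \|R\|$, which holds since $Y_2$ has orthonormal columns, yields $\|\sin\Theta(X_1,Y_1)\| = \|W\| \le \|R\|/\delta$.
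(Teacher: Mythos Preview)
The paper does not prove this theorem at all: it is stated as the classical Davis--Kahan $\sin\Theta$ theorem with a citation to~\cite{Davis1970} and then used as a black box in the subsequent perturbation lemmas. Your sketch is the standard Sylvester-equation route and is correct; the reduction $\|\sin\Theta(X_1,Y_1)\|=\|Y_2^H X_1\|$, the derivation of $L_2 W - W A_1 = Y_2^H R$, and the appeal to the interval-separation hypothesis to bound the inverse of the Sylvester operator are exactly how the original Davis--Kahan argument proceeds, so there is nothing to compare against in the paper itself.
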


Next we present a couple of results derived from the above
classical results.
We begin from the following perturbation theorem 
for the nullspace {of a matrix} based on \Cref{Thm:sinTheta}).

\begin{theorem}\label{Thm:null}
Let $J, \widetilde{J} \in\mathbb{C}^{k\times n}$ be of full row rank, $k < n$, 
and $X_1$ and $\widetilde{X}_1$ be the orthonormal bases 
of the nullspaces of $J$ and $\widetilde{J}$, respectively, 
Assume 
$\epsilon = \|J-\widetilde{J}\| \leq\frac{1}{2}\sigma_{\min}(J)$,
where $\sigma_{\min}(J)$ is the smallest singular value of $J$. 
Then 
\begin{equation} \label{eq:JJbd} 
\|\sin\Theta(X_1,\widetilde{X}_1)\|\leq 
\frac{8\|J\|} {\sigma^2_{\min}(J)}\,\epsilon.
\end{equation} 
\end{theorem}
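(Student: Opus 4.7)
The plan is to reduce the nullspace perturbation question to a Hermitian eigenspace perturbation question by working with $J^HJ$ and $\widetilde{J}^H\widetilde{J}$, and then apply the Davis--Kahan $\sin\Theta$ theorem (\Cref{Thm:sinTheta}). The key observation is that $\mathcal{N}(J) = \mathcal{R}(X_1)$ is precisely the eigenspace of $J^HJ$ for eigenvalue $0$, and the remaining eigenvalues of $J^HJ$ are $\sigma_1^2(J), \dots, \sigma_k^2(J)$, all bounded below by $\sigma_{\min}^2(J)$; the analogous statement holds for $\widetilde{J}^H\widetilde{J}$.

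First, I would establish a spectral gap for $\widetilde{J}^H\widetilde{J}$ so that Davis--Kahan is applicable. By Weyl's inequality for singular values, $\sigma_{\min}(\widetilde{J}) \geq \sigma_{\min}(J) - \epsilon \geq \tfrac{1}{2}\sigma_{\min}(J)$ since $\epsilon \leq \tfrac{1}{2}\sigma_{\min}(J)$. Hence the nonzero eigenvalues of $\widetilde{J}^H\widetilde{J}$ are all at least $\tfrac{1}{4}\sigma_{\min}^2(J)$, while the eigenvalues of $J^HJ$ on the subspace $\mathcal{R}(X_1)$ are all $0$. Setting $A = J^HJ$, $A+H = \widetilde{J}^H\widetilde{J}$, $A_1 = 0$, $[\alpha,\beta] = \{0\}$ in \Cref{Thm:sinTheta}, we can take $\delta = \tfrac{1}{4}\sigma_{\min}^2(J)$.

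Next, I would bound the residual $R = (A+H)X_1 - X_1 A_1 = \widetilde{J}^H\widetilde{J}X_1$. Since $JX_1 = 0$, we have $\widetilde{J}X_1 = (\widetilde{J}-J)X_1$, so $\|\widetilde{J}X_1\| \leq \epsilon$. Therefore $\|R\| \leq \|\widetilde{J}\|\cdot \|\widetilde{J}X_1\| \leq (\|J\|+\epsilon)\epsilon$. Plugging into Davis--Kahan gives
\[
\|\sin\Theta(X_1,\widetilde{X}_1)\| \leq \frac{\|R\|}{\delta} \leq \frac{4(\|J\|+\epsilon)\,\epsilon}{\sigma_{\min}^2(J)}.
\]
Finally, using $\epsilon \leq \tfrac{1}{2}\sigma_{\min}(J) \leq \tfrac{1}{2}\|J\|$ to absorb $\epsilon$ into $\|J\|$ via $\|J\|+\epsilon \leq 2\|J\|$ yields the stated bound \eqref{eq:JJbd}.

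The only mild subtlety I anticipate is making sure the Davis--Kahan setup is applied correctly given that the eigenvalue of interest is $0$ (so $\alpha=\beta=0$) and that the gap $\delta$ is computed on the perturbed side $\widetilde{J}^H\widetilde{J}$, which is exactly the form of \Cref{Thm:sinTheta}. Everything else is a straightforward chain of inequalities, and no new machinery beyond Weyl's inequality and the $\sin\Theta$ theorem is needed.
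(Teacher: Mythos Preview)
Your proposal is correct and follows essentially the same argument as the paper: both reduce to the Hermitian eigenspace problem for $J^HJ$ versus $\widetilde{J}^H\widetilde{J}$, use Weyl's inequality to get $\sigma_{\min}(\widetilde{J})\geq\tfrac{1}{2}\sigma_{\min}(J)$ and hence the gap $\delta=\tfrac{1}{4}\sigma_{\min}^2(J)$, bound the residual $R=\widetilde{J}^H(\widetilde{J}-J)X_1$ by $(\|J\|+\epsilon)\epsilon\leq 2\|J\|\epsilon$, and finish with Davis--Kahan. The bookkeeping and constants match exactly.
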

\begin{proof}
Note that $Jx=0$ if and only if $J^HJx=0$. Thus $X_1$ and 
$\widetilde{X}_1$ are also the orthonormal bases 
of eigen-subspace corresponding to the eigenvalue 0 
of $J^HJ$ and $\widetilde{J}^H\widetilde{J}$, respectively. 
	
By Weyl's theorem~\cite[p.198]{demmel1997applied},
$\sigma_{\min}(\widetilde{J})\geq \frac{1}{2}\sigma_{\min}(J)$. 
To apply \Cref{Thm:sinTheta}, let $\widetilde{X}_2$ be 
the $n\times (n-k)$ matrix such that 
$[\widetilde{X}_1,\widetilde{X}_2]$ is a unitary matrix, and 
let $A_1$ be a $k\times k$ zero matrix, then we have
\[
\begin{aligned}
&\begin{bmatrix}X_1^H\\ X_2^H\end{bmatrix}J^HJ\begin{bmatrix}X_1&X_2\end{bmatrix} = \Diag(A_1,A_2),\quad \begin{bmatrix}\widetilde{X}_1^H\\\widetilde{X}_2^H\end{bmatrix}\widetilde{J}^H\widetilde{J}\begin{bmatrix}\widetilde{X}_1&\widetilde{X}_2\end{bmatrix} = \Diag(0,L_2),\\
&\Lambda(A_1) \subseteq[0,0],\quad 
\Lambda(L_2)\subseteq \mathbb{R}\setminus 
\left(-\frac{1}{4}\sigma^2_{\min}(J),\frac{1}{4}\sigma^2_{\min}(J) \right).
\end{aligned}
\]
Let $R\equiv \widetilde{J}^H\widetilde{J}X_1-X_1A_1$.  
Then by a straightforward calculation we have
\[  \|R\|=\|\widetilde{J}^H(\widetilde{J}-J)X_1\|\leq 
\epsilon(\epsilon+\|J\|)\leq 2\|J\|\epsilon, \]
where the last inequality results from the fact that 
$\epsilon\leq \frac{1}{2}\sigma_{\min}(J)\leq\|J\|$. 
Then the bound \eqref{eq:JJbd} is directly derived from \Cref{Thm:sinTheta}.
\end{proof}

The following theorem shows that for matrices with two orthonormal columns, 
if they simultaneously diagonalize a Hermitian matrix $C$ and 
their $\|\sin\Theta\|$ metric is small, then their 2-norm difference 
could also be small under a column scaling. 
It will be used in \Cref{Thm:behavior2,lemmaRQItypeII} for proving 
the approximation properties of 2DRQs.

\begin{theorem}\label{Thm:doubleV}
Let $C \in \mathbb{C}^{n\times n}$ be Hermitian. 
Assume $U=[u_1, u_2]$ and $V = [v_1, v_2]$ have orthonormal columns 
such that $U^HCU=\Diag(c_{1,u},c_{2,u})$ with {$c_{1,u} > c_{2,u}$}, 
and $V^HCV=\Diag(c_{1,v},c_{2,v})$ with $c_{1,v}\geq c_{2,v}$ 
%
%
Then there exist positive constants $t_0$ and $\kappa_0$ depending on 
$(C,c_{1,u}-c_{2,u})$, such that when $\|\sin\Theta(U,V)\|\leq t_0$, 
\[
\left\|U-V\begin{bmatrix} \gamma_1&\\ &\gamma_2 \end{bmatrix}\right\|
\leq \kappa_0\|\sin\Theta(U,V)\|.
\]
where $\gamma_i=\sign(v_i^Hu_i)$ for $i=1,2$. 
\end{theorem}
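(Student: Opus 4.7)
The plan is to first invoke Lemma 4.3 to obtain a unitary $2{\times}2$ matrix $Z$ with $\|U - VZ\|\le\sqrt{2}\,\|\sin\Theta(U,V)\|$, and then to argue that the simultaneous diagonalization of $C$ by $U$ and $V$ forces $Z$ itself to be $O(\|\sin\Theta(U,V)\|)$-close to a diagonal unitary $\Diag(\gamma_1^Z,\gamma_2^Z)$ whose phases agree with $\sign(v_j^H u_j)$ up to the same order. A triangle inequality will then produce the claimed bound with a constant $\kappa_0$ depending only on $\|C\|$ and the gap $c_{1,u}-c_{2,u}$.

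Setting $s := \|\sin\Theta(U,V)\|$, $D_u=\Diag(c_{1,u},c_{2,u})$ and $D_v=\Diag(c_{1,v},c_{2,v})$, the key transport estimate is
\[
\|Z^HD_vZ - D_u\| = \|(VZ)^H C(VZ) - U^HCU\| \le 2\sqrt{2}\,\|C\|\,s,
\]
obtained by inserting and subtracting $(VZ)^H C U$. Weyl's inequality then gives $|c_{i,v}-c_{i,u}|=O(s)$, and by choosing $t_0$ small enough in terms of $\|C\|$ and $c_{1,u}-c_{2,u}$ the gap is preserved: $c_{1,v}-c_{2,v}\ge (c_{1,u}-c_{2,u})/2$. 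Writing $Z=(z_{ij})$, the unitarity relation $\overline{z_{11}}z_{12}+\overline{z_{21}}z_{22}=0$ collapses the $(1,2)$ entry of $Z^HD_vZ$ to $(c_{1,v}-c_{2,v})\overline{z_{11}}z_{12}$ and the $(1,1)$ entry to $(c_{1,v}-c_{2,v})|z_{11}|^2+c_{2,v}$. The diagonal equation forces $|z_{11}|^2 = 1 - O(s)$ (so $|z_{11}|\ge 1/\sqrt{2}$ for $s$ small), and then the off-diagonal identity combined with the preserved gap yields $|z_{12}|=O(s)$; symmetrically $|z_{21}|=O(s)$, and unitarity refines $|z_{jj}|=1-O(s^2)$. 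Setting $\gamma_j^Z := z_{jj}/|z_{jj}|$, these estimates combine to $\|Z-\Diag(\gamma_1^Z,\gamma_2^Z)\|=O(s)$.

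For the phase identification, $V^HU = Z + V^H(U-VZ)$ gives $v_j^H u_j = z_{jj} + O(s)$; since $|z_{jj}|$ stays close to $1$, $v_j^H u_j$ is bounded away from $0$ for $s\le t_0$, so $\gamma_j := \sign(v_j^H u_j)$ is well defined and $|\gamma_j-\gamma_j^Z|=O(s)$ by the Lipschitz continuity of $\sign$ away from the origin. A final triangle inequality
\[
\|U - V\Diag(\gamma_1,\gamma_2)\| \le \|U-VZ\| + \|Z-\Diag(\gamma_1^Z,\gamma_2^Z)\| + \|\Diag(\gamma_1^Z-\gamma_1,\gamma_2^Z-\gamma_2)\|
\]
then delivers the bound $\kappa_0 s$. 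The main technical subtlety is securing the $O(s)$ (rather than the naive $O(\sqrt{s})$) bound on the off-diagonal entries of $Z$; this requires combining the Weyl gap preservation, the unitarity-based lower bound $|z_{11}|\gtrsim 1$, and the off-diagonal identity in the correct order, after which the phase identification and concluding triangle inequality are routine.
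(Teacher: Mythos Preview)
Your proof is correct and shares the paper's opening moves (invoke \Cref{lemma:Zhang} to produce $Z$, derive the transport estimate $\|Z^HD_vZ-D_u\|\le 2\sqrt{2}\|C\|s$, and apply Weyl), but it diverges from the paper in two places.

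First, to show that $Z$ is close to a diagonal unitary, the paper applies the Davis--Kahan $\sin\Theta$ theorem (\Cref{Thm:sinTheta}) to the $2\times 2$ pair $(D_u,\,Z^HD_vZ)$, obtaining $1-|e_i^Hw_i|^2=O(s^2)$ in one stroke with $W=Z^H$. You instead analyze the entries of $Z$ directly, using the explicit identity $(Z^HD_vZ)_{12}=(c_{1,v}-c_{2,v})\overline{z_{11}}z_{12}$ together with the diagonal constraint and unitarity to bootstrap $|z_{12}|,|z_{21}|=O(s)$ and then $|z_{jj}|=1-O(s^2)$. Your route is more elementary and exploits the $2\times 2$ structure by hand; the paper's Davis--Kahan argument would generalize more readily to larger blocks.

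Second, for the phase identification, the paper observes that $\|U-V\Diag(\widehat\gamma_1,\widehat\gamma_2)\|_F^2=4-2\Real(u_1^Hv_1\widehat\gamma_1)-2\Real(u_2^Hv_2\widehat\gamma_2)$ is minimized over unimodular $\widehat\gamma_i$ precisely at $\widehat\gamma_i=\sign(v_i^Hu_i)$, so the bound obtained with the auxiliary phases $\widetilde\gamma_i=\sign(w_i^He_i)$ automatically transfers to $\gamma_i$. You instead compare $\gamma_j$ to $\gamma_j^Z=z_{jj}/|z_{jj}|$ via $v_j^Hu_j=z_{jj}+O(s)$ and the Lipschitz continuity of $\sign$ away from the origin. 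Both arguments are clean; the paper's optimality trick sidesteps any Lipschitz estimate, while yours makes the $O(s)$ dependence explicit at each step.
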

\begin{proof}
By \Cref{lemma:Zhang}, there exists unitary matrices $Z$ 
such that 
\[
\|U-VZ\|\leq\sqrt{2}t,
\]
where $t=\|\sin\Theta(U,V)\|$. 
Denote $\Delta U = U-VZ$, then we have
\begin{equation}\label{eq:lemdoubleVeqs0}
\|U^HCU-Z^HV^HCVZ\|=\|\Delta U^HCU+Z^HV^HC\Delta U\|\leq 2\sqrt{2}\|C\|t.
\end{equation}
Note that $c_{1,v}$ and $c_{2,v}$ are eigenvalues of $Z^HV^HCVZ$. 
By Weyl's theorem~\cite[p.\,198]{demmel1997applied},
for $i=1,2$, we have
\[
|c_{i,u}-c_{i,v}|\leq 2\sqrt{2}\|C\|t.
\]
Let $W=[w_1,w_2]\equiv Z^H$, consider the eigenvalue decomposition of 
$U^HCU$ and $Z^HV^HCVZ$: 
\[
\begin{aligned}
U^HCU = \Diag(c_{1,u}, c_{2,u}),\quad 
Z^HV^HCVZ = W \Diag(c_{1,v}, c_{2,v})W^H.
\end{aligned}
\]
Let $R=(Z^HV^HCVZ-U^HCU)e_1$. Utilizing \eqref{eq:lemdoubleVeqs0}, we have
\[
\|R\|\leq 2\sqrt{2}\|C\|t. 
\]
Note that when $t\leq \frac{c_{1,u}-c_{2,u}}{4\sqrt{2}\|C\|}$, 
$c_{2,v}\leq c_{2,u}+\frac{c_{1,u}-c_{2,u}}{2}=\frac{c_{1,u}+c_{2,u}}{2}$. 
Denote  $\delta = \frac{c_{1,u}-c_{2,u}}{2}$, we have		
\[
c_{1,u}\subseteq [c_{1,u},c_{1,u}] 
\quad \mbox{and} \quad
c_{2,v}\subseteq \mathbb{R}\setminus (c_{1,u}-\delta,c_{1,u}+\delta). 
\] 
Thus by \Cref{Thm:sinTheta}, 
\begin{equation}\label{eq:lemdoubleVeqs1}
\|\sin\Theta(e_1,w_1)\|\leq \frac{2\sqrt{2}\|C\|t}{\delta} 
\quad \mbox{and} \quad 
\|\sin\Theta(e_2,w_2)\|\leq \frac{2\sqrt{2}\|C\|t}{\delta}.
\end{equation}
On the other hand, according to \Cref{lemma:sinthetatwovec}, 
for $i = 1, 2$, 
\begin{equation}\label{eq:lemdoublesinglevec} 
\|\sin\Theta(e_i,w_i)\| = \sqrt{1-|e_i^Hw_i|^2}.
\end{equation}
By \eqref{eq:lemdoubleVeqs1} and \eqref{eq:lemdoublesinglevec},
\begin{equation}\label{eq:lemdoubleVeqs3}
 1-|e_i^Hw_i|\leq 1-|e_i^Hw_i|^2 \leq \frac{8\|C\|^2}{\delta^2}t^2.
\end{equation}
Now define $\widetilde{\gamma}_i=\sign(w_i^He_i)$ and we have
\begin{equation}
\begin{aligned}
\min\limits_{|\widehat{\gamma}_1|=|\widehat{\gamma}_2|=1}\|U-V\Diag(\widehat{\gamma}_1,\widehat{\gamma}_2)\|_F&\leq\|U-V\Diag(\widetilde{\gamma}_1,\widetilde{\gamma}_2)\|_F\\
&\leq \|U-VZ\|_F+\|VZ-VZW\Diag(\widetilde{\gamma}_1,\widetilde{\gamma}_2)\|_F\\
&\leq 2t + \|I-W\Diag(\widetilde{\gamma}_1,\widetilde{\gamma}_2)\|_F\\
&= 2t +  \sqrt{4-2|e_1^Hw_1|-2|e_2^Hw_2|}\\
&\leq 2t +  \sqrt{\frac{32\|C\|^2}{\delta^2}t^2}
=2t +  \frac{4\sqrt{2}\|C\|}{\delta}t.
\end{aligned}
\end{equation}
where in the third inequality we use the fact that 
$\|X\|_F=\sqrt{\|X(:,1)\|^2+\|X(:,2)\|^2}\leq\sqrt{2}\|X\|$ 
for a $n\times2$ matrix. 

On the other hand, for $|\widehat{\gamma}_1|=|\widehat{\gamma}_2|=1$, $\|U-V\Diag(\widehat{\gamma}_1,\widehat{\gamma}_2)\|_F^2 = 4-2\Real(u_1^Hv_1\widehat{\gamma}_1)-2\Real(u_2^Hv_2\widehat{\gamma}_2)$ reach minimum when $\widehat{\gamma}_i=\sign(v_i^Hu_i)$. Since $\gamma_i=\sign(v_i^Hu_i)$, we have
\[
\begin{aligned}
\|U-V\Diag(\gamma_1,\gamma_2)\|\leq \|U-V\Diag(\gamma_1,\gamma_2)\|_F&=\min\limits_{|\widehat{\gamma}_1|=|\widehat{\gamma}_2|=1}\|U-V\Diag(\widehat{\gamma}_1,\widehat{\gamma}_2)\|_F\\ &\leq \left(2 +  \frac{4\sqrt{2}\|C\|}{\delta}\right)t.  
\end{aligned}
\]
Let $t_0 = \frac{c_{1,u}-c_{2,u}}{4\sqrt{2}\|C\|}$ and 
$\kappa_0 = 2+\frac{4\sqrt{2}\|C\|}{\delta}$, then we reach the conclusion. 
\end{proof}

To end this section, we present a simple estimate on a second-order approximation
of $\lambda_*$ from an approximate 2D-eigenvector. 
It will be used in \Cref{lemma:xkp,Thm:verylong} 
for the error bounds of 2D-Ritz values. 

\begin{theorem}\label{Thm:lsq}
Let $(\mu_*,\lambda_*,x_*)$ be a 2D-eigentriplet of the 2DEVP \eqref{2deig},
Assume $\widetilde{x}$ is 
an approximate 2D-eigenvector satisfying $\widetilde{x}^HC\widetilde{x}=0$ 
and $\widetilde{x}^H\widetilde{x}=1$. 
Let $\epsilon=\|x_*-\widetilde{x}\|$, then
\begin{equation} \label{eq:2ndbd} 
|\widetilde{x}^HA\widetilde{x}-\lambda_*|\leq 
\|A-\mu_*C-\lambda_*I\|\,\epsilon^2.
\end{equation} 
\end{theorem}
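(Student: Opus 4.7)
The plan is to recognize this as the 2DEVP analogue of the classical quadratic accuracy of the Rayleigh quotient for a Hermitian eigenproblem. The key idea is to rewrite the left-hand side of \eqref{eq:2ndbd} so that it becomes a quadratic form in the error $e := \widetilde{x} - x_*$ of the shifted Hermitian operator
\[
M := A - \mu_* C - \lambda_* I,
\]
which annihilates $x_*$ by \eqref{eq:1a}.

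The first step is to use both normalization conditions $\widetilde{x}^H \widetilde{x} = 1$ and $\widetilde{x}^H C \widetilde{x} = 0$ to replace $\lambda_*$ by $\lambda_* \widetilde{x}^H \widetilde{x} + \mu_* \widetilde{x}^H C \widetilde{x}$ (the second summand being zero). This yields the identity
\[
\widetilde{x}^H A \widetilde{x} - \lambda_* \;=\; \widetilde{x}^H M \widetilde{x}.
\]
Note that this is the only place where the 2D constraint \eqref{eq:1b} plays a role; without it one would only recover the classical first-order Rayleigh quotient bound relative to the residual of \eqref{eq:1a}.

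Next I would substitute $\widetilde{x} = x_* + e$ and expand the Hermitian quadratic form:
\[
\widetilde{x}^H M \widetilde{x} = x_*^H M x_* + x_*^H M e + e^H M x_* + e^H M e.
\]
Since $M x_* = 0$ and $M$ is Hermitian, the first three terms vanish, so $\widetilde{x}^H A \widetilde{x} - \lambda_* = e^H M e$. The conclusion \eqref{eq:2ndbd} then follows from the Cauchy--Schwarz-type bound $|e^H M e| \le \|M\|\,\|e\|^2 = \|A - \mu_* C - \lambda_* I\|\,\epsilon^2$.

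There is no real obstacle here: the only subtlety is the bookkeeping at the first step, where one must simultaneously invoke both \eqref{eq:1b} and \eqref{eq:1c} on $\widetilde{x}$ to make $\mu_* C$ and $\lambda_* I$ appear inside the shifted operator, which in turn is what enables the cross terms to cancel and produce an $O(\epsilon^2)$ bound instead of an $O(\epsilon)$ one.
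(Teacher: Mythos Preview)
Your proof is correct and is essentially the same argument as the paper's: both reduce $\widetilde{x}^H A\widetilde{x}-\lambda_*$ to $e^H(A-\mu_*C-\lambda_*I)e$ using the constraints $\widetilde{x}^H C\widetilde{x}=0$, $\widetilde{x}^H\widetilde{x}=1$ and the eigen-relation $(A-\mu_*C-\lambda_*I)x_*=0$. The only difference is organizational---you introduce the shifted operator $M$ at the outset so the cross terms vanish immediately, whereas the paper expands in $A$ first and then eliminates the linear terms via the same two constraints; the content is identical.
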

\begin{proof}
Denote $\Delta x=x_*-\widetilde{x}$, by striaghtforward calculation, 
we have
\begin{equation}\label{eq:lsqeqsN0}
\begin{aligned}		
|\widetilde{x}^HA\widetilde{x}-\lambda_*|&=|(x_*-\Delta x)^HA(x_*-\Delta x)-\lambda_*|\\
&=|-2\Real(\Delta x^HAx_*)+\Delta x^HA\Delta x|\\
&=|-2\Real(\Delta x^H(\mu_*Cx_*+\lambda_*x_*))+\Delta x^HA\Delta x|,
\end{aligned}
\end{equation}
where in the second inequality we use the fact 
$\lambda_*=x_*^HAx_*$.
By $\widetilde{x}^HC\widetilde{x}=0$ and $\widetilde{x}^H\widetilde{x}=1$, we have
\[
\begin{aligned}
0 &=(x_*-\Delta x)^HC(x_*-\Delta x) = -2\Real(\Delta x^HCx_*)+\Delta x^HC\Delta x,\\
1 &=(x_*-\Delta x)^H(x_*-\Delta x) = 1-2\Real(\Delta x^Hx_*)+\Delta x^H\Delta x,\\
\end{aligned}
\]
which implies 
\begin{equation} \label{eq:2Rebd} 
2\Real(\Delta x^HCx_*) = \Delta x^HC\Delta x
\quad \mbox{and} \
2\Real(\Delta x^Hx_*) = \Delta x^H\Delta x.
\end{equation} 
The desired bound \eqref{eq:2ndbd} is then derived by 
combining \eqref{eq:lsqeqsN0} and \eqref{eq:2Rebd}. 
\end{proof}

\section{Convergence analysis for simple-2D eigenvalues}
\label{sec:converge_analysis_I}

In this section, we prove that the 2DRQI is locally quadratically convergent for computing a nonsingular simple 2D-eigentriplet.  

%

\subsection{Properties of Jacobian $J_k$} 
Let $(\mu_k, \lambda_k, x_k)$ be the $k$-th iterate to a nonsingular simple 2D-eigentriplet $(\mu_*,\lambda_*,x_*)$, where $x_*$ is the vector in $\mathcal{X}_*$ closest to $x_k$ and $\mathcal{X}_*$ is the set of 2D eigenvectors defined in \eqref{eq:XstarStructureI}. The following lemma shows that when $(\mu_k,\lambda_k,x_k)$ is sufficiently close to 
$(\mu_*,\lambda_*,\mathcal{X}_*)$, Jacobian $J_k = J(\mu_k, \lambda_k, x_k)$ is nonsingular.

\begin{lemma}\label{Thm:behavior}
Let $\epsilon = \max\{|\mu_k-\mu_*|, |\lambda_k-\lambda_*|, 
\dist(x_k,\mathcal{X}_*)\}$. Then there exists $\epsilon_1>0$ 
depending on $(A,C,\mu_*,\lambda_*)$, such that if 
$\epsilon\leq\epsilon_1$, the following statements hold:  
\begin{enumerate}[(i)] 
\item $J_k$ is nonsingular.

\item
$\sigma_{n}(\widehat{J}_k) \geq\frac{1}{2}\,\sigma_{n}(\widehat{J}_*)$.
\end{enumerate} 
\end{lemma}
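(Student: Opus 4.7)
The plan is to argue by continuity/perturbation of singular values, combining the Weyl-type inequality for singular values with explicit block-by-block bounds on $\|J_k - J(\mu_*,\lambda_*,x_*)\|$ and $\|\widehat{J}_k - \widehat{J}(\mu_*,\lambda_*,x_*)\|$. The crucial preliminary observation (already noted in \Cref{sec:simple2d}) is that $\sigma_{\min}(J(\mu_*,\lambda_*,\gamma x_*))$ and $\sigma_n(\widehat{J}(\mu_*,\lambda_*,\gamma x_*))$ do not depend on the unit scalar $\gamma$, so it makes sense to speak of a single positive number $\sigma_{\min,J_*} > 0$ (nonsingularity) and $\sigma_{n,\widehat{J}_*} > 0$; moreover, choosing $x_*$ as the closest point in $\mathcal{X}_*$ to $x_k$ is consistent and loses nothing.

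First I would write
\[
J_k - J(\mu_*,\lambda_*,x_*) =
\left[\begin{array}{c|cc}
(\mu_*-\mu_k)C + (\lambda_*-\lambda_k)I & -C(x_k-x_*) & -(x_k-x_*) \\ \hline
-(x_k-x_*)^H C & 0 & 0 \\
-(x_k-x_*)^H & 0 & 0
\end{array}\right],
\]
and bound the norm of each block by a constant multiple of $\epsilon$ depending only on $\|A\|$, $\|C\|$, and $1$. Summing over the (finitely many) blocks yields $\|J_k - J(\mu_*,\lambda_*,x_*)\| \le C_0\,\epsilon$ for a constant $C_0 = C_0(A,C)$, and similarly $\|\widehat{J}_k - \widehat{J}(\mu_*,\lambda_*,x_*)\| \le C_0\,\epsilon$ (with the same or a smaller constant, since $\widehat{J}$ is a submatrix of $J$).

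Next I would invoke Weyl's theorem for singular values (see \cite[p.198]{demmel1997applied}), which gives
\[
\sigma_{\min}(J_k) \ge \sigma_{\min,J_*} - C_0\,\epsilon,
\qquad
\sigma_n(\widehat{J}_k) \ge \sigma_{n,\widehat{J}_*} - C_0\,\epsilon.
\]
Choosing
\[
\epsilon_1 = \min\!\left\{\frac{\sigma_{\min,J_*}}{2C_0},\ \frac{\sigma_{n,\widehat{J}_*}}{2C_0}\right\}
\]
immediately yields $\sigma_{\min}(J_k) \ge \tfrac{1}{2}\sigma_{\min,J_*} > 0$ (hence statement (i)) and $\sigma_n(\widehat{J}_k) \ge \tfrac{1}{2}\sigma_{n,\widehat{J}_*}$ (statement (ii)).

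There is essentially no hard step: the one subtlety to spell out is the legitimacy of picking a single $x_*\in\mathcal{X}_*$ realizing $\dist(x_k,\mathcal{X}_*)$ and then comparing $J_k$ to $J(\mu_*,\lambda_*,x_*)$. This is justified precisely because $\sigma_{\min}$ and $\sigma_n$ of the respective matrices are $\gamma$-invariant along $\mathcal{X}_*$, as established right before \eqref{eq:defVhatstar}, so the baseline lower bounds $\sigma_{\min,J_*}$ and $\sigma_{n,\widehat{J}_*}$ are intrinsic constants of the 2D-eigentriplet rather than of a representative. The rest is a routine perturbation argument, and the same argument strategy will transfer almost verbatim to the multiple 2D-eigenvalue case in \Cref{sec:converge_analysis_II}, where the compactness of $\mathcal{X}_*$ (already used earlier to define $\sigma_{\min,J_*}$ and $\sigma_{n,\widehat{J}_*}$ as infima) again suffices.
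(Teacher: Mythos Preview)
Your proposal is correct and follows essentially the same approach as the paper: write out $J_k-J_*$ and $\widehat{J}_k-\widehat{J}_*$ block by block, bound the norm by a constant times $\epsilon$, and apply Weyl's inequality for singular values to choose $\epsilon_1$. The paper carries out the same computation with explicit constants ($3(\|C\|+1)$ and $\sqrt{3}(\|C\|+1)$) via the block-norm inequality $\|[A_{ij}]\|\le\sqrt{mn}\max_{i,j}\|A_{ij}\|$, but otherwise the argument is identical; your remark on the $\gamma$-invariance of $\sigma_{\min}(J_*)$ and $\sigma_n(\widehat{J}_*)$ is a helpful clarification that the paper establishes earlier in \Cref{sec:simple2d} rather than in the proof itself.
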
  
\begin{proof} 
Note that
\begin{equation} \label{eq:Thmbehavioreqs1}
\|J_k-J_*\|=\left\|\begin{bmatrix}
(\mu_*-\mu_k)C+(\lambda_*-\lambda_k)I&C(x_*-x_k)& x_*-x_k\\
(x_*-x_k)^HC&0&0\\
(x_*-x_k)^H&0&0
\end{bmatrix}\right\|\leq 3(\|C\|+1)\epsilon,
\end{equation}
and
\begin{equation}\label{eq:Thmbehavioreqs2}
\|\widehat{J}_k-\widehat{J}_*\|=\left\|\begin{bmatrix}
(\mu_*-\mu_k)C+(\lambda_*-\lambda_k)I&C(x_*-x_k)& x_*-x_k\\
\end{bmatrix}\right\|\leq \sqrt{3}(\|C\|+1)\epsilon.
\end{equation}
Here the last inequalities in \eqref{eq:Thmbehavioreqs1}
and \eqref{eq:Thmbehavioreqs2} are from the following 
matrix norm inequality~\cite[Lemma 3.5]{2007Higham}: 
\begin{equation}
\left\|\begin{bmatrix}
A_{11}&\cdots&A_{1n}\\
\vdots&\ddots&\vdots\\
A_{m1}&\cdots&A_{mn}
\end{bmatrix}\right\|\leq\sqrt{mn}\max\limits_{i,j}\|A_{ij}\|. 
\end{equation}
Let
\begin{equation}\label{eq:defepsilon1}
\epsilon_1=
\frac{\min\{\sigma_{\min}(J_*), \sigma_{n}(\widehat{J}_*)\}}{6(\|C\|+1)}>0,
\end{equation}
Then when $\epsilon\leq\epsilon_1$, by 
Weyl's theorem\cite[p.\,198]{demmel1997applied}, we have 
\begin{equation} \label{eq:jksigma}  
|\sigma_{\min}(J_k)-\sigma_{\min}(J_*)|\leq\frac{\sigma_{\min}(J_*)}{2}
\end{equation} 
and
\begin{equation} \label{eq:jkhatsigma} 
|\sigma_{n}(\widehat{J}_k)-\sigma_{n}(\widehat{J}_*)|
\leq\frac{\sigma_{n}(\widehat{J}_*)}{2}.
\end{equation} 
By \eqref{eq:jksigma} and \eqref{eq:jkhatsigma}, we have
the results (i) and (ii). 
\end{proof}

\subsection{Approximation of $V_k$ to $V_*$}
Next we show that the approximation of $V_k$ to $V_*$ 
when $(\mu_k, \lambda_k, x_k)$ is sufficiently 
close to $(\mu_*, \lambda_*, \mathcal{X}_*)$. 
Recall that $V_k$ is defined in \eqref{eq:vkdef2} 
and \eqref{eq:determineVk} and $V_*$ is defined in \Cref{lem:cstar}.

\begin{lemma}\label{lemmaRQItypeI}
Let $\epsilon = \max\{|\mu_k-\mu_*|, |\lambda_k-\lambda_*|, 
\dist(x_k,\mathcal{X}_*)\}$. There exists positive scalars 
$\epsilon_t, \kappa_t$ depending on $(A,C,\mu_*,\lambda_*)$, 
such that if $\epsilon\leq\epsilon_t$, we have 
\begin{equation}\label{eq:VkV*}
\|\sin\Theta(V_k,V_*)\|\leq \kappa_t\epsilon.
\end{equation} 
\end{lemma}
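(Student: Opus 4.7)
The plan is to chain three perturbation estimates: (i) closeness of the Jacobians $\widehat J_k$ and $\widehat J_*$ already established in \Cref{Thm:behavior}, (ii) the resulting closeness of the nullspaces of $\widehat J_k$ and $\widehat J_*$ via \Cref{Thm:null}, and (iii) preservation of this closeness under projection onto the first $n$ coordinates, which identifies the spans of $V_k$ and $V_*$.

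First, from \eqref{eq:Thmbehavioreqs2} we have $\|\widehat J_k-\widehat J_*\|\le\sqrt{3}(\|C\|+1)\epsilon$. I choose $\epsilon_t$ to be the minimum of $\epsilon_1$ from \Cref{Thm:behavior} and $\sigma_{n}(\widehat J_*)/(2\sqrt{3}(\|C\|+1))$, so that the hypothesis of \Cref{Thm:null} holds. Applying that theorem to orthonormal nullspace bases $\widehat V_k$ of $\widehat J_k$ and $\widehat V_*$ of $\widehat J_*$ (with $\widehat V_*$ taken to be the explicit matrix in \eqref{eq:defVhatstar}, which already has orthonormal columns thanks to $x_*^Hx'_*=0$) yields
\[
\|\sin\Theta(\widehat V_k,\widehat V_*)\|\le \kappa_1\,\epsilon,
\]
where $\kappa_1$ depends only on $(A,C,\mu_*,\lambda_*)$ through $\|\widehat J_*\|$, $\sigma_n(\widehat J_*)$, and $\|C\|$.

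Next, \Cref{lemma:Zhang} supplies a $2\times 2$ unitary $Z$ with $\|\widehat V_k-\widehat V_*Z\|\le \sqrt{2}\kappa_1\epsilon$. Stripping off the last two rows and observing that this operation is $1$-Lipschitz in the spectral norm, I obtain
\[
\|\widetilde V_k-\widetilde V'_*\|\le \sqrt{2}\kappa_1\epsilon,
\]
where $\widetilde V_k$ is the top-$n$ block of $\widehat V_k$ (with $\mathrm{span}(\widetilde V_k)=\mathrm{span}(V_k)$ by construction of $V_k$ in \eqref{eq:vkdef2}) and $\widetilde V'_*=[x_*,\;x'_*/\sqrt{\|x'_*\|^2+1}]\,Z$ (with $\mathrm{span}(\widetilde V'_*)=\mathrm{span}(\widetilde V_*)=\mathrm{span}(V_*)$). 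Since $\sin\Theta$ depends only on the spans, $\|\sin\Theta(V_k,V_*)\|=\|\sin\Theta(\widetilde V_k,\widetilde V'_*)\|$.

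The final step is to convert column-wise closeness into a subspace-angle bound. Here the orthogonality $x_*^Hx'_*=0$ gives $\sigma_{\min}\bigl([x_*,\;x'_*/\sqrt{\|x'_*\|^2+1}]\bigr)=\|x'_*\|/\sqrt{\|x'_*\|^2+1}$, which is a strictly positive constant by \Cref{cor:xprimenotzero}; unitary invariance preserves this under multiplication by $Z$, so $\sigma_{\min}(\widetilde V'_*)$ is bounded below by a constant depending only on $(A,C,\mu_*,\lambda_*)$. Shrinking $\epsilon_t$ once more so that Weyl's theorem gives $\sigma_{\min}(\widetilde V_k)\ge \tfrac12\sigma_{\min}(\widetilde V'_*)$, representing an arbitrary unit vector in either span as a linear combination of the columns (with coefficient norm bounded by the reciprocal of the smallest singular value), and applying \Cref{Thm:sinthetais} to both directions, I obtain the required bound with $\kappa_t$ being an explicit constant. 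The main obstacle is precisely this last step: the primary object produced by \Cref{Thm:null} lives in $\mathbb{C}^{n+2}$, whereas the statement concerns $\mathbb{C}^n$, and one must rule out the projection becoming degenerate. That this cannot happen in Case~I is the content of the nonsingularity hypothesis, funneled through the identity $x_*^Hx'_*=0$ and the nonvanishing of $x'_*$.
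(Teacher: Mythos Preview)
Your proof is correct and follows essentially the same route as the paper: bound $\|\widehat J_k-\widehat J_*\|$, apply \Cref{Thm:null} to control $\|\sin\Theta(\widehat V_k,\widehat V_*)\|$, use \Cref{lemma:Zhang} to pass to a norm bound, and then project to the first $n$ coordinates. The only cosmetic difference is in the last step: the paper right-multiplies by $\Diag\bigl(1,\sqrt{\|x'_*\|^2+1}/\|x'_*\|\bigr)$ so that one side becomes the orthonormal matrix $V_*$ and then invokes \Cref{Thm:lu}, whereas you keep both sides possibly non-orthonormal and bound the angle directly via \Cref{Thm:sinthetais} using the lower bound $\sigma_{\min}(\widetilde V'_*)=\|x'_*\|/\sqrt{\|x'_*\|^2+1}$; both arguments are valid and yield the same constant up to harmless factors.
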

\begin{proof}
Consider the nullspace of $\widehat{J}_k$. 
According to \Cref{Thm:behavior}, there exists positive constants 
$\epsilon_1$ depending on $(A,C,\mu_*,\lambda_*)$, 
such that if $\epsilon\leq\epsilon_1$, we have
\[
\sigma_{n}(\widehat{J}_k)\geq\frac{1}{2}\sigma_{n}(\widehat{J}_*)>0.
\]
Thus the nullspace of $\widehat{J}_k$ has dimension 2. 
Denote $\widehat{V}_k$ as an orthonormal basis of 
the nullspace of $\widehat{J}_k$.
	
On the other hand, in \Cref{sec:simple2d}, we know that  
\[
	\widehat{V}_*\equiv \left[\begin{array}{cc}
		x_* & x^{\prime}_*/\sqrt{\|x^{\prime}_*\|^2+1}\\
		0&1/\sqrt{\|x^{\prime}_*\|^2+1}\\
		0&0
	\end{array}\right], 
\]
is an orthonormal basis of the nullspace of $\widehat{J}(\mu_*,\lambda_*,x_*)$. 
Note that by \eqref{eq:Thmbehavioreqs2}, 
\[
\|\widehat{J}_*-\widehat{J}_k\|\leq\sqrt{3}(\|C\|+1)\epsilon.
\]
Since $\epsilon\leq\epsilon_1$, by the definition \eqref{eq:defepsilon1}
of $\epsilon_1$, we have 
\[
\sqrt{3}(\|C\|+1)\epsilon\leq\frac{\sigma_{n}(\widehat{J}_*)}{2}.  
\] 
By \Cref{Thm:null}, we obtain
\begin{equation} \label{eq:sinbd} 
\begin{aligned}
\|\sin\Theta(\widehat{V}_*,\widehat{V}_k)\|&\leq  \frac{8\sqrt{3}(\|C\|+1)\|\widehat{J}_*\|\epsilon}{\sigma_{n}(\widehat{J}_*)^2}\\
&\leq\frac{8\sqrt{3}(\|C\|+1)(\|A-\mu_*C-\lambda_*I\|+\|C\|+1)}{\sigma_{n}(\widehat{J}_*)^2}\epsilon
\equiv\widetilde{\alpha}_1\epsilon.	
\end{aligned}
\end{equation} 
By \Cref{lemma:Zhang}, {the inequality~\eqref{eq:sinbd}}  
implies there exists a unitary matrix $\widehat{Z}$ such that
\[
\|\widehat{V}_*-\widehat{V}_k\widehat{Z}\|\leq
\sqrt{2}\|\sin\Theta(\widehat{V}_k, \widehat{V}_*)\|
=\sqrt{2}\widetilde{\alpha}_1\epsilon.
\]
{Therefore, for the first $n$ rows of $\widehat{V}_*-\widehat{V}_k\widehat{Z}$,}
we have
\[
\left\|V_*\Diag(1,\frac{\|x_*^{\prime}\|}{\sqrt{\|x_*^{\prime}\|^2+1}})
-\widehat{V}_k(1:n,:)\widehat{Z}\right\|
\leq\sqrt{2}\widetilde{\alpha}_1\epsilon, 
\]
or equivalently, 
\[
\left\|V_*-\widehat{V}_k(1:n,:)\widehat{Z}\Diag\left(1,\frac{\sqrt{\|x_*^{\prime}\|^2+1}}{\|x_*^{\prime}\|}\right)\right\|\leq\sqrt{2}\frac{\sqrt{\|x_*^{\prime}\|^2+1}}{\|x_*^{\prime}\|}\widetilde{\alpha}_1\epsilon.\]
By \Cref{Thm:lu}, when $\epsilon\leq\frac{\|x_*^{\prime}\|}{2\sqrt{2}\widetilde{\alpha}_1\sqrt{\|x_*^{\prime}\|^2+1}}$, we have
	\begin{equation}\label{eq:typeIIImportantLemma}
		\|\sin\Theta(V_*,V_k\|\leq2\sqrt{2}\frac{\sqrt{\|x_*^{\prime}\|^2+1}}{\|x_*^{\prime}\|}\widetilde{\alpha}_1\epsilon.
	\end{equation}
Let $\epsilon_t = \min\left\{\epsilon_1,\frac{\|x_*^{\prime}\|}
{2\sqrt{2}\widetilde{\alpha}_1\sqrt{\|x_*^{\prime}\|^2+1}}\right\}$
and $\kappa_t=2\sqrt{2}\frac{\sqrt{\|x_*^{\prime}\|^2+1}}
{\|x_*^{\prime}\|}\widetilde{\alpha}_1$. Then we reach the
bound~\eqref{eq:VkV*}.
\end{proof}

\subsection{Properties of the 2DRQ}
We now investigate the properties of 
the $2\times2$ 2DRQ $(A_k, C_k)$. We show that $C_k$ is an indefinite matrix and the 2D-eigenvalues 
of $(A_k, C_k)$ defined in \eqref{eq:akck} are simple.

\begin{lemma}\label{Thm:behavior2}
Let $\epsilon = \max\{|\mu_k-\mu_*|, |\lambda_k-\lambda_*|, 
\dist(x_k,\mathcal{X}_*)\}$. 
There exists $\epsilon_2>0$ depending on $(A,C,\mu_*,\lambda_*)$ 
such that if $\epsilon\leq\epsilon_2$, 
\begin{equation} \label{eq:a12bd} 
|a_{12,k}|\geq\frac{1}{2}|a_{12,*}|,
\end{equation} 
where $a_{ij,k}$ and $a_{ij,*}$ denote the $(i,j)$-elements of 
$A_k$ and $A_*$ respectively.
\end{lemma}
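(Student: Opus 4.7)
The plan is to transfer closeness of $V_k$ to $V_*$ (in the $\sin\Theta$ metric) into closeness of the $(1,2)$--entries of the compressed matrices $V_k^HAV_k$ and $V_*^HAV_*$. Since Lemma~\ref{lem:cstar}(ii) tells us that $a_{12,*}\neq 0$, a uniform entrywise perturbation bound then yields the result.

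First, I would shrink $\epsilon$ to invoke Lemma~\ref{lemmaRQItypeI}, giving a constant $\kappa_t$ (depending only on $(A,C,\mu_*,\lambda_*)$) such that $\|\sin\Theta(V_k,V_*)\|\le\kappa_t\epsilon$. At this point one cannot conclude $\|V_k-V_*\|$ is small because of the inherent column-phase freedom in $V_k$ and $V_*$. However, both $V_k^HCV_k$ and $V_*^HCV_*$ are diagonal with the same decreasing ordering, and by Lemma~\ref{lem:cstar}(i) together with the indefiniteness of $C_*$ we in fact have $c_{1,*}>0>c_{2,*}$, so the diagonal entries of $V_*^HCV_*$ are \emph{strictly} separated. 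This is exactly the hypothesis of Theorem~\ref{Thm:doubleV}, which I would apply (after further shrinking $\epsilon$ so that $\|\sin\Theta(V_k,V_*)\|\le t_0$) with $U=V_*$ and $V=V_k$ to obtain unit-modulus scalars $\gamma_1,\gamma_2$ and a constant $\kappa_0$ depending on $(C,c_{1,*}-c_{2,*})$ with
\[
\bigl\|V_*-V_k\,\Diag(\gamma_1,\gamma_2)\bigr\|\le \kappa_0\kappa_t\,\epsilon.
\]

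Next I would transfer this bound to the compressed matrices. Writing $\widetilde V_* = V_k\,\Diag(\gamma_1,\gamma_2)$, a standard two-term telescoping estimate gives
\[
\bigl\|\widetilde V_*^HA\widetilde V_* - V_*^HAV_*\bigr\|\le 2\|A\|\bigl\|V_*-\widetilde V_*\bigr\|\le 2\|A\|\,\kappa_0\kappa_t\,\epsilon.
\]
Because $\widetilde V_*^HA\widetilde V_* = \Diag(\bar\gamma_1,\bar\gamma_2)\,A_k\,\Diag(\gamma_1,\gamma_2)$, its $(1,2)$-entry is $\bar\gamma_1\gamma_2\,a_{12,k}$, which has the same modulus as $a_{12,k}$. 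Taking $(1,2)$-entries in the displayed inequality and using the reverse triangle inequality therefore yields
\[
\bigl|\,|a_{12,k}|-|a_{12,*}|\,\bigr|\le 2\|A\|\,\kappa_0\kappa_t\,\epsilon.
\]
Choosing
\[
\epsilon_2=\min\!\left\{\epsilon_t,\ \frac{t_0}{\kappa_t},\ \frac{|a_{12,*}|}{4\|A\|\,\kappa_0\kappa_t}\right\}
\]
(which is strictly positive by Lemma~\ref{lem:cstar}(ii)) gives $|a_{12,k}|\ge\tfrac12|a_{12,*}|$ whenever $\epsilon\le\epsilon_2$.

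The only nontrivial point is the second step: a bare $\sin\Theta$ bound is insufficient, and brute-forcing an optimal unitary alignment would only control a $2\times2$ similarity of $A_k$ and $A_*$, which is not enough to compare $(1,2)$-entries individually. The simultaneous-$C$-diagonalization assumption, exploited through Theorem~\ref{Thm:doubleV}, is what pins down the alignment to a diagonal phase matrix and thereby preserves $|a_{12,\cdot}|$; this is the main subtlety and is where the strict separation $c_{1,*}>c_{2,*}$ guaranteed by indefiniteness of $C_*$ is used.
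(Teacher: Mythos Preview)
Your proposal is correct and follows essentially the same route as the paper: invoke Lemma~\ref{lemmaRQItypeI} for the $\sin\Theta$ bound, then Theorem~\ref{Thm:doubleV} to upgrade to a diagonal-phase alignment $\|V_*-V_k\Diag(\gamma_1,\gamma_2)\|\le\kappa_0\kappa_t\epsilon$, and finally conclude $|a_{12,k}|\ge\tfrac12|a_{12,*}|$ with the same choice of $\epsilon_2$. The only cosmetic difference is that the paper expands $|\bar\gamma_1 v_1^HAv_2\gamma_2|=|(v_{1,*}-E_1)^HA(v_{2,*}-E_2)|$ directly to reach $|a_{12,k}|\ge|a_{12,*}|-2\|A\|\kappa_0\kappa_t\epsilon$, whereas you bound the full $2\times2$ matrix difference $\widetilde V_*^HA\widetilde V_*-V_*^HAV_*$ by telescoping and then read off the $(1,2)$-entry; both yield the identical constant $2\|A\|\kappa_0\kappa_t$.
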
 
\begin{proof} 
By~\Cref{lemmaRQItypeI}, there exists positive constants 
$\epsilon_t$ and $\kappa_t$ depending on $(A,C,\mu_*,\lambda_*)$, 
such that if $\epsilon\leq\epsilon_t$, we have
\begin{equation}\label{eq:thmbehavior2eqs0}
\|\sin\Theta(V_*,V_k)\|\leq \kappa_t\epsilon.
\end{equation}
Since $V_*^HCV_*=\Diag(c_{1,*},c_{2,*})$ and 
$V_k^HCV_k=\Diag(c_{1,k},c_{2,k})$,
by \Cref{Thm:doubleV}, there exists positive constants 
$t_0,\kappa_0$ depending on $(C,c_{1,*},c_{2,*})$ such that 
if $\kappa_t\epsilon\leq t_0$, i.e., $\epsilon\leq\frac{t_0}{\kappa_t}$, 
there exists $\gamma_1,\gamma_2$ with absolute value 1, satisfying
\begin{equation}\label{eq:thmbehavior2eqs10}
\|V_*-V_k\Diag(\gamma_1,\gamma_2)\|\leq \kappa_0\|\sin\Theta(V_*,V_k)\|\leq \kappa_0\kappa_t\epsilon,
\end{equation}
where $t_0,\kappa_0$ are constants defined in \Cref{Thm:doubleV} 
and only depend on $(C,c_{1,*}-c_{2,*})$. 
	
Denote $E= {\begin{bmatrix} E_1&E_2 \end{bmatrix}}
\equiv V_*-V_k\Diag(\gamma_1,\gamma_2)$, 
and write $V_k = \begin{bmatrix} v_1&v_2 \end{bmatrix}$. 
Utilizing \eqref{eq:thmbehavior2eqs10}, we have
\begin{align*}
|a_{12,k}|&=|v_1^HAv_2| 
=|\overline{\gamma}_1v_1^HAv_2\gamma_2|
=|(v_{1,*}-E_1)^HA(v_{2,*}-E_2)| \\ 
&\geq |v_{1,*}^HAv_{2,*}|-|v_{1,*}^HAE_2|-|E_1^HAv_2|
\geq |a_{12,*}|-2\|E\|\|A\|
\geq |a_{12,*}|-2\kappa_0\kappa_t\|A\|\epsilon.
\end{align*}
Therefore, if $\epsilon\leq\epsilon_2\equiv \min\left\{\epsilon_t,\frac{t_0}{\kappa_t},\frac{|a_{12,*}|}{4\kappa_0\kappa_t\|A\|} \right\}$, we have 
$|a_{12,k}|\geq\frac{|a_{12,*}|}{2}$. Since $\epsilon_2$ only depends on 
$(A,C,\mu_*,\lambda_*)$, we reach the inequality~\eqref{eq:a12bd}. 
\end{proof} 

Next we show that $C_k = V^H_k C V_k$ is an indefinite matrix, 
and its eigenvalues are bounded away from 0. 

\begin{lemma}\label{thm:Ckindef}
Let $\epsilon = \max\{|\mu_k-\mu_*|, |\lambda_k-\lambda_*|, 
\dist(x_k,\mathcal{X}_*)\}$. There exists $\epsilon_T>0$ depending 
on $(A,C,\mu_*,\lambda_*)$, such that if $\epsilon\leq\epsilon_T$, 
\begin{itemize}
\item[(i)] $C_k$ is indefinite.
\item[(ii)] $\frac{1}{2}c_{1,*}\leq\  c_{1,k}\leq\frac{3}{2}c_{1,*}$
and $\frac{3}{2}c_{2,*}\leq\  c_{2,k}\leq\frac{1}{2}c_{2,*}$. 
\end{itemize} 	
\end{lemma}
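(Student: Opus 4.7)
The plan is to transfer the closeness of $V_k$ to $V_*$ (from Lemma \ref{lemmaRQItypeI}) into closeness of the diagonal entries $(c_{1,k},c_{2,k})$ to $(c_{1,*},c_{2,*})$, and then use the strict indefiniteness of $C_*$ (from Lemma \ref{lem:cstar}(i)) to conclude both statements (i) and (ii).

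First I would invoke Lemma \ref{lemmaRQItypeI} to obtain $\|\sin\Theta(V_*,V_k)\|\le \kappa_t\epsilon$ whenever $\epsilon\le\epsilon_t$. Since $C_*$ is indefinite, $c_{1,*}>0>c_{2,*}$, so in particular $c_{1,*}>c_{2,*}$, which is the hypothesis needed to apply Theorem \ref{Thm:doubleV}. Applying that theorem yields unimodular scalars $\gamma_1,\gamma_2$ and a constant $\kappa_0$ depending only on $(C,c_{1,*}-c_{2,*})$ such that, for $\epsilon$ small enough,
\[
\|E\|\le \kappa_0\kappa_t\epsilon,\qquad E:=V_*-V_k\,\Diag(\gamma_1,\gamma_2).
\]

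Next I would compute $V_*^HCV_*$ in terms of $V_k$ and $E$. Writing $D=\Diag(\gamma_1,\gamma_2)$ and expanding,
\[
\Diag(c_{1,*},c_{2,*}) = V_*^HCV_* = D^HV_k^HCV_kD + D^HV_k^HCE + E^HCV_kD + E^HCE.
\]
Because $D$ is unitary and diagonal, $D^H\Diag(c_{1,k},c_{2,k})D=\Diag(c_{1,k},c_{2,k})$, so
\[
\Diag(c_{1,k}-c_{1,*},\,c_{2,k}-c_{2,*}) = -D^HV_k^HCE - E^HCV_kD - E^HCE,
\]
which gives $|c_{i,k}-c_{i,*}|\le (2+\|E\|)\|C\|\|E\|\le 3\|C\|\kappa_0\kappa_t\,\epsilon$ once $\|E\|\le 1$. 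Equivalently, I can appeal to Weyl's theorem on the perturbation of Hermitian eigenvalues to get the same bound, since the eigenvalues of $D^H V_k^H C V_k D$ are $c_{1,k},c_{2,k}$ and the perturbation has norm at most $(2\|E\|+\|E\|^2)\|C\|$.

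Finally I would choose
\[
\epsilon_T=\min\!\left\{\epsilon_t,\ \tfrac{t_0}{\kappa_t},\ \tfrac{1}{\kappa_0\kappa_t},\ \tfrac{\min\{c_{1,*},-c_{2,*}\}}{6\,\|C\|\,\kappa_0\kappa_t}\right\},
\]
so that $|c_{i,k}-c_{i,*}|\le \tfrac12\min\{c_{1,*},-c_{2,*}\}$. This simultaneously delivers $\tfrac12 c_{1,*}\le c_{1,k}\le \tfrac32 c_{1,*}$ and $\tfrac32 c_{2,*}\le c_{2,k}\le \tfrac12 c_{2,*}$, proving (ii); and since the first of these keeps $c_{1,k}>0$ while the second keeps $c_{2,k}<0$, the indefiniteness of $C_k$ in (i) follows. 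The main obstacle is mostly bookkeeping: verifying that the constant $\epsilon_T$ depends only on $(A,C,\mu_*,\lambda_*)$ (which it does, since $\kappa_t,\kappa_0,t_0,c_{1,*},c_{2,*}$ all do) and checking that the applicability conditions of Theorem \ref{Thm:doubleV} and Lemma \ref{lemmaRQItypeI} are satisfied by the chosen $\epsilon_T$.
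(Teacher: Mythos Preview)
Your proposal is correct and follows essentially the same strategy as the paper: bound the subspace angle via Lemma~\ref{lemmaRQItypeI}, align $V_k$ with $V_*$ up to a unitary factor, and then control $|c_{i,k}-c_{i,*}|$ to deduce (ii) and hence (i). The only difference is that you invoke Theorem~\ref{Thm:doubleV} to obtain a \emph{diagonal} unitary alignment and then compute entrywise, whereas the paper uses the lighter Lemma~\ref{lemma:Zhang} (a general unitary $U_k$) together with Weyl's theorem directly on $V_*^HCV_*$ versus $U_k^HV_k^HCV_kU_k$; since only eigenvalue perturbation is needed here, the paper's route is slightly more economical, but your argument is equally valid.
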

\begin{proof} 
In \Cref{lem:cstar}, we have proven that 
$C_* = V^H_* C V_* = \Diag(c_{1,*}, c_{2,*})$ is indefinite
with $c_{1,*} > 0 > c_{2,*}$. 
	
For $V_k$ defined in \eqref{eq:vkdef2} and \eqref{eq:determineVk}, 
by \Cref{lemma:Zhang}, there exists a unitary matrix $U_k$, such that 
\[
\|V_*-V_kU_k\|\leq\sqrt{2}\|\sin\Theta(V_k,V_*)\|.
\] 
According to Weyl's theorem~\cite[p.\,198]{demmel1997applied}, 
and the fact that $c_{1,k}\geq c_{2,k}$ are eigenvalues of $C_k$, we have
\begin{align*} 
|c_{i,*}-c_{i,k}| 
& \leq \|V_*^HCV_*-U_k^HV_k^HCV_kU_k\| \\ 
& \leq\|(V_*-V_kU_k)^HCV_*\|+
\|U_k^HV_k^HC(V_*-V_kU_k)\| \\
& \leq 2\sqrt{2}\|C\|\|\sin\Theta(V_k,V_*)\| 
\quad \mbox{for $i = 1, 2$}.  
\end{align*} 
Now by \Cref{lemmaRQItypeI}, there exists positive 
constants $\epsilon_t,\kappa_t$ depending on $(A,C,\mu_*,\lambda_*)$, 
such that if $\epsilon\leq\epsilon_t$, 
\[
\|\sin\Theta(V_k,V_*)\|\leq \kappa_t\epsilon. 
\] 
Therefore, if 
\[ 
\epsilon\leq\epsilon_T\equiv
\min\left\{\epsilon_t,\frac{\min\{|c_{1,*}|,|c_{2,*}|\}}{4\sqrt{2}\kappa_t\|C\|}\right\}, 		
\] 
we have 
\begin{align*}
\max\{|c_{1,*}-c_{1,k}|, |c_{2,*}-c_{2,k}|\}
&\leq 2\sqrt{2}\|C\|\|\sin\Theta(V_k,V_*)\|\\
&\leq \frac{\min\{|c_{1,*}|,|c_{2,*}|\}}{2}.
\end{align*}
Consequently, we have 
\begin{align*}
c_{2,k}&=c_{2,*}-(c_{2,*}-c_{2,k})\leq c_{2,*} + |c_{2,*}-c_{2,k}|
\leq  c_{2,*}+\frac{\min\{|c_{1,*}|,|c_{2,*}|\}}{2}\leq\frac{1}{2}c_{2,*};\\
c_{2,k}&=c_{2,*}-(c_{2,*}-c_{2,k})\geq c_{2,*}-|c_{2,*}-c_{2,k}|
\geq c_{2,*}-\frac{\min\{|c_{1,*}|,|c_{2,*}|\}}{2}\geq \frac{3}{2}c_{2,*}. 
\end{align*}
By similar argument,  
we have $\frac{1}{2}c_{1,*}\leq c_{1,k}\leq\frac{3}{2}c_{1,*}$. 
This completes the proof.
\end{proof} 

\subsection{Main result}
The following is the main result on 
the quadratic convergence of the 2DRQI to 
compute a nonsingular simple 2D-eigenvalue.

\begin{theorem} \label{Thm:convtype1}
Let $\epsilon = \max\{|\mu_k-\mu_*|, |\lambda_k-\lambda_*|, 
\dist(x_k,\mathcal{X}_*)\}$. There exists 
a constant $\epsilon_0>0$ depending on $(A,C,\mu_*,\lambda_*)$, 
such that if $\epsilon\leq\epsilon_0$, 
\begin{enumerate}[(i)] 
\item 
$k+1$ approximate 2D-eigentriplet 
$(\mu_{k+1},\lambda_{k+1},x_{k+1})$ is determined as in \eqref{eq:update1a}. 

\item The 2DRQI has locally quadratic convergence rate, i.e.,
\begin{equation} 
|\mu_{k+1}-\mu_*|\leq \kappa_1 \epsilon^2, \quad
|\lambda_{k+1}-\lambda_*| \leq \kappa_2 \epsilon^4 
\quad \mbox{and} \quad
\dist(x_{k+1},\mathcal{X}_*) \leq \kappa_3 \epsilon^2, 
\end{equation} 
where $\kappa_1, \kappa_2$ and $\kappa_3$ 
are constants depending on $(A, C, \mu_*, \lambda_*)$.   
\end{enumerate} 
\end{theorem}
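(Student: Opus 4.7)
The plan is to exploit a refined nullspace estimate: although the subspace $\Span(V_k)$ only approximates $\Span(V_*)$ to first order in $\epsilon$ by \Cref{lemmaRQItypeI}, it already contains a vector that approximates $x_*$ to \emph{second} order. This stronger approximation propagates through the $2\times 2$ 2DRQ problem to yield quadratic rates for $\mu_{k+1}$ and $x_{k+1}$, while \Cref{Thm:lsq} boosts the $\lambda_{k+1}$ rate to quartic.

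First, choose $\epsilon_0 \leq \min\{\epsilon_1,\epsilon_2,\epsilon_T\}$ so that \Cref{Thm:behavior,Thm:behavior2,thm:Ckindef} apply; in particular $\sigma_n(\widehat{J}_k) \geq \tfrac12 \sigma_n(\widehat{J}_*) > 0$, $C_k$ is indefinite with eigenvalues bounded away from $0$, and $|a_{12,k}| \geq \tfrac12 |a_{12,*}| > 0$, so the update is given by \eqref{eq:update1a}. Fix $x_* \in \mathcal{X}_*$ realizing $\dist(x_k,\mathcal{X}_*)$. The crucial step is the following residual identity. For the $(n+2)$-dimensional column vector
\[
v_k = \begin{bmatrix} x_* \\ \mu_*-\mu_k \\ \lambda_*-\lambda_k \end{bmatrix},
\]
a direct expansion using $(A-\mu_*C-\lambda_*I)x_*=0$ gives
\[
\widehat{J}_k v_k \;=\; (\mu_*-\mu_k)\,C(x_*-x_k) + (\lambda_*-\lambda_k)(x_*-x_k),
\]
whose norm is bounded by $(\|C\|+1)\epsilon^2$. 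Together with $\sigma_n(\widehat{J}_k)$ bounded below, the distance from $v_k$ to the nullspace of $\widehat{J}_k$ is $O(\epsilon^2)$, so there exists $w_k = [y_k;\,a_k;\,b_k]$ in that nullspace with $\|y_k-x_*\|=O(\epsilon^2)$ and $y_k \in \Span(V_k)$.

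Write $y_k = V_k t_k$. A short computation yields $t_k^H C_k t_k = y_k^H C y_k = O(\epsilon^2)$, $\|t_k\| = 1 + O(\epsilon^2)$, and
\[
(A_k - \mu_* C_k - \lambda_* I)\,t_k \;=\; V_k^H(A - \mu_* C - \lambda_* I)(y_k - x_*) \;=\; O(\epsilon^2),
\]
so $(\mu_*, \lambda_*, t_k/\|t_k\|)$ is an approximate 2D-eigentriplet of the 2DRQ $(A_k, C_k)$ with residual $O(\epsilon^2)$. Since $a_{12,k}\neq 0$, the 2DRQ possesses the two distinct simple 2D-eigentriplets described in \eqref{def:2deigsimple}; smooth (implicit-function-type) dependence of simple 2D-eigentriplets on the $2\times 2$ data then implies that exactly one of them, with index $j_*$, satisfies $|\nu(\alpha_{k,j_*}) - \mu_*|,\ |\theta(\alpha_{k,j_*}) - \lambda_*| = O(\epsilon^2)$ and $z(\alpha_{k,j_*})$ matches $t_k/\|t_k\|$ up to an $O(\epsilon^2)$ phase error. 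The minimum-distance selection rule in \eqref{eq:update1a} then picks $j = j_*$ once $\epsilon$ is small enough, since the other 2D-eigenvalue of $(A_k,C_k)$ is bounded away from $(\mu_k,\lambda_k)$ by the separation of the two distinct 2D-eigenvalues of $(A_*,C_*)$ (\Cref{lem:cstar}), while the correct one lies only $O(\epsilon)$ away. This yields $|\mu_{k+1}-\mu_*| = O(\epsilon^2)$, and after phase adjustment, $\dist(x_{k+1},\mathcal{X}_*) \leq \|x_{k+1} - \gamma y_k\| + \|y_k - x_*\| = O(\epsilon^2)$.

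Finally, since $\lambda_{k+1} = z(\alpha_{k,j_*})^H A_k z(\alpha_{k,j_*}) = x_{k+1}^H A x_{k+1}$ with $x_{k+1}^H C x_{k+1} = 0$ and $\|x_{k+1}\| = 1$, \Cref{Thm:lsq} applied with the phase-aligned representative of $\mathcal{X}_*$ gives $|\lambda_{k+1} - \lambda_*| \leq \|A-\mu_*C-\lambda_*I\| \cdot \dist(x_{k+1},\mathcal{X}_*)^2 = O(\epsilon^4)$. The hardest step is the residual cancellation $\widehat{J}_k v_k = O(\epsilon^2)$: it is this quadratic estimate — rather than the merely $O(\epsilon)$ subspace-angle bound of \Cref{lemmaRQItypeI} — that drives the quadratic convergence, and the bookkeeping showing the algorithm locks onto the correct index $j_*$ relies on a quantitative separation of the two simple 2D-eigenvalues of the limit 2DRQ $(A_*,C_*)$.
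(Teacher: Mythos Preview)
Your proposal shares the paper's key mechanism: the residual identity $\widehat{J}_k\,[x_*;\mu_*-\mu_k;\lambda_*-\lambda_k] = O(\epsilon^2)$, which produces a vector $y_k\in\Span\{V_k\}$ with $\|y_k-x_*\|=O(\epsilon^2)$ (the paper's \Cref{prop1}). The final step via \Cref{Thm:lsq} to get the $O(\epsilon^4)$ bound on $\lambda_{k+1}$ is also the same. Where you diverge from the paper, and where the argument is incomplete, is the passage from ``$(\mu_*,\lambda_*,t_k/\|t_k\|)$ is an $O(\epsilon^2)$-residual approximate 2D-eigentriplet of $(A_k,C_k)$'' to ``one of the two exact 2D-eigentriplets of $(A_k,C_k)$ lies within $O(\epsilon^2)$''.

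Invoking ``smooth (implicit-function-type) dependence of simple 2D-eigentriplets on the $2\times 2$ data'' does not do this job: smooth dependence on the \emph{data} $(A_k,C_k)$ is not the same as small residual implying proximity to a solution. For the latter you would need a Newton--Kantorovich-type statement, which in turn requires the Jacobian of the $2\times 2$ 2DEVP at its solutions to be nonsingular with uniformly bounded inverse --- equivalently, by \Cref{Thm:singularity}(i), that the reduced eigencurves of $(A_k,C_k)$ have $\lambda''\neq 0$ at the 2D-eigenvalues. This is not established anywhere, and is not automatic from $a_{12,k}\neq 0$ and $C_k$ indefinite.

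The paper bypasses this issue by working with the explicit parameterization $z(\alpha)$ rather than an abstract inverse-function argument. In \Cref{prop2} it projects the pre-optimal vector onto the constraint set $\{z^HC_kz=0,\,z^Hz=1\}$ to obtain $\widetilde{x}_{k+1}=V_kz(\widetilde\alpha)$ for some unit $\widetilde\alpha$, still $O(\epsilon^2)$ from $\mathcal{X}_*$. Then in \Cref{lemma:xkp} it uses the algebraic identity $\imag\nu(\alpha)=\imag(\alpha\,a_{12,k})/\sqrt{-c_{1,k}c_{2,k}}$: since the residual forces $|\imag\nu(\widetilde\alpha)|=O(\epsilon^2)$ and $|a_{12,k}|\ge\tfrac12|a_{12,*}|$, one gets $|\sin\widetilde\theta|=O(\epsilon^2)$ where $\widetilde\alpha=e^{i\widetilde\theta}\alpha_{k,1}$, hence $\widetilde\alpha$ is $O(\epsilon^2)$-close to one of the $\alpha_{k,j}$. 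This explicit phase-matching is what replaces your implicit-function step, and it is where the lower bound on $|a_{12,k}|$ from \Cref{Thm:behavior2} is actually used quantitatively. To repair your argument along its own lines you would need to supply and verify the Jacobian bound for the reduced problem; following the paper's explicit route is cleaner.
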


The proof of \Cref{Thm:convtype1} is split into the following three 
lemmas.  The first lemma shows that $\Span\{V_k\}$ contains 
a second-order approximate vector to $\mathcal{X}_*$. 

\begin{lemma}\label{prop1}
Let 
$\epsilon = \max\{|\mu_k-\mu_*|, |\lambda_k-\lambda_*|, 
\dist(x_k,\mathcal{X}_*)\}$. If $\epsilon\leq\epsilon_1$, 
where $\epsilon_1$ is defined in \Cref{Thm:behavior}, 
there exists ${x}_{k+1}^{\rm (p)}\in \Span\{V_k\}$, 
such that $\|{x}_{k+1}^{\rm (p)}\|=1$ and
\begin{equation}
\dist({x}_{k+1}^{\rm (p)},\mathcal{X}_*)\leq
\frac{4(\|C\|+1)}{\sigma_{n}(\widehat{J}_*)}\,\epsilon^2.
\end{equation}
\end{lemma}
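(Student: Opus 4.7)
The plan is to construct $x_{k+1}^{(p)}$ as the normalized first-$n$-component projection of a Newton-corrected nullspace vector of $\widehat{J}_k$. The key structural observation is that, by the definition \eqref{eq:vkdef2}, $\Span\{V_k\}$ coincides with the projection of $\mathcal{N}(\widehat{J}_k)$ onto the first $n$ coordinates; hence any vector in $\mathcal{N}(\widehat{J}_k)\subset\mathbb{C}^{n+2}$ automatically yields a vector in $\Span\{V_k\}$ through its first $n$ entries.

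First, I would consider the augmented ansatz $\widehat{w} = [\,x_*^T,\; \mu_*-\mu_k,\; \lambda_*-\lambda_k\,]^T\in\mathbb{C}^{n+2}$. Using the eigenvalue identity $(A-\mu_*C-\lambda_*I)x_*=0$, a direct computation collapses the residual to
\[
\widehat{J}_k\widehat{w} = (\mu_*-\mu_k)\,C(x_*-x_k) + (\lambda_*-\lambda_k)(x_*-x_k),
\]
which is bounded by $(\|C\|+1)\epsilon^2$ since each of $|\mu_*-\mu_k|$, $|\lambda_*-\lambda_k|$, and $\|x_*-x_k\|$ is at most $\epsilon$. This is the one decisive step where the quadratic factor arises: the last two entries of $\widehat{w}$ are chosen precisely so that the first-order contributions cancel through the eigenvalue equation, leaving a residual that is a product of two $O(\epsilon)$ factors.

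Next, by \Cref{Thm:behavior}(ii) we have $\sigma_n(\widehat{J}_k)\geq\frac{1}{2}\sigma_n(\widehat{J}_*)$, so $\widehat{J}_k$ is surjective and the equation $\widehat{J}_k\Delta w = \widehat{J}_k\widehat{w}$ admits a minimum-norm solution with
\[
\|\Delta w\|\leq \frac{\|\widehat{J}_k\widehat{w}\|}{\sigma_n(\widehat{J}_k)}\leq \frac{2(\|C\|+1)}{\sigma_n(\widehat{J}_*)}\epsilon^2.
\]
By construction, $\widehat{w}-\Delta w\in\mathcal{N}(\widehat{J}_k)$, so its first $n$ components $y = x_* - \Delta w(1:n)$ lie in $\Span\{V_k\}$.

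Finally I would set $x_{k+1}^{(p)} = y/\|y\|$. For $\epsilon$ small enough, $\|\Delta w\|\leq 1/2$, hence $\|y\|\geq 1/2$ and the normalization is well-defined. A standard inequality gives $\|x_{k+1}^{(p)}-x_*\|\leq 2\|y-x_*\|\leq 2\|\Delta w\|\leq 4(\|C\|+1)\epsilon^2/\sigma_n(\widehat{J}_*)$; since $x_*\in\mathcal{X}_*$, this implies the claimed bound on $\dist(x_{k+1}^{(p)},\mathcal{X}_*)$. The main subtlety of the proof is the choice of the last two entries of $\widehat{w}$: the naive ansatz $[x_*;0;0]$ only yields a first-order residual, whereas appending the parameter corrections $(\mu_*-\mu_k,\lambda_*-\lambda_k)$ is exactly what converts an extra factor of $\epsilon$ into a factor of $\|x_*-x_k\|$, producing the quadratic estimate.
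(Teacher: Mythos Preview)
Your proof is correct and follows essentially the same approach as the paper: both form the augmented vector $[x_*;\,\mu_*-\mu_k;\,\lambda_*-\lambda_k]$, exploit the eigenvalue identity to show its $\widehat{J}_k$-residual is $O(\epsilon^2)$, subtract the minimum-norm solution to land in $\mathcal{N}(\widehat{J}_k)$, and normalize the first $n$ components. The only cosmetic difference is that the paper writes the normalization bound via $\|x_*-\widehat{x}_{k+1}^{(p)}\|+\bigl|\|\widehat{x}_{k+1}^{(p)}\|-1\bigr|$, while you invoke the equivalent inequality $\|y/\|y\|-x_*\|\le 2\|y-x_*\|$; note that this inequality holds for any $y\neq 0$, so your remark ``for $\epsilon$ small enough'' can be replaced by the observation that $\|y\|\ge 1-\|\Delta w\|>0$ under $\epsilon\le\epsilon_1$.
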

\begin{proof} 
Denote $\Delta \mu_k=\mu_*-\mu_k$, 
$\Delta \lambda_k=\lambda_*-\lambda_k$ and 
$\Delta x_k=x_*-x_k$. Then 
\[
|\Delta \mu_k|\leq \epsilon, \quad  
|\Delta \lambda_k|\leq \epsilon, \quad 
\|\Delta x_k\|\leq \epsilon,
\] 
and 
\begin{equation} \label{eq:Deltak}
(A-\mu_kC-\lambda_kI)x_*-\Delta\mu_kC(x_k+\Delta x_k)
-\Delta\lambda_k(x_k+\Delta x_k)=0.
\end{equation} 
Write \eqref{eq:Deltak} as 
\begin{equation} \label{eq:jk1}
\widehat{J}_k 
\left[\begin{array}{c}x_*\\ \Delta\mu_k\\
\Delta\lambda_k\end{array}\right]
=\Delta\mu_kC\Delta x_k+\Delta\lambda_k\Delta x_k.
\end{equation} 
Then the minimum norm solution of \eqref{eq:jk1} is given by 
$$
\begin{bmatrix}
\Delta x^{\rm (p)}\\ \Delta \mu^{\rm (p)} \\ \Delta \lambda^{\rm (p)}
\end{bmatrix} = 
V_k^{(J)}(\Sigma_{k}^{(J)})^{-1}(U_k^{(J)})^H
\left(\Delta\mu_kC\Delta x_k + \Delta\lambda_k\Delta x_k\right), 
$$
where $\widehat{J}_k = U_k^{(J)}\Sigma_{k}^{(J)}(V_k^{(J)})^H$ is 
the SVD of $\widehat{J}_k$, 
$U_k^{(J)}\in\mathbb{C}^{n\times n}$, $\Sigma_{k}^{(J)}\in\mathbb{C}^{n\times n}$ and $V_k^{(J)}\in\mathbb{C}^{(n+2) \times n}$.  
Let 
\begin{equation}\label{eq:lepropeqs1}
\widehat{x}_{k+1}^{\rm (p)} = x_*-\Delta x^{\rm (p)},  \quad
\Delta\mu_{k+1}^{\rm (p)} = \Delta\mu_k-\Delta\mu^{\rm (p)}, \quad
\Delta\lambda_{k+1}^{\rm (p)} = \Delta\lambda_k-\Delta \lambda^{\rm (p)}.
\end{equation}
Then 
\begin{equation}\label{presolution2}
\widehat{J}_k \left[\begin{array}{c} \widehat{x}_{k+1}^{\rm (p)}\\
\Delta\mu_{k+1}^{\rm (p)} \\
\Delta\lambda_{k+1}^{\rm (p)}\end{array}\right]=0,
\end{equation}
and the vector $\begin{bmatrix}
\widehat{x}_{k+1}^{\rm (p)}\\
\Delta\mu_{k+1}^{\rm (p)} \\
\Delta\lambda_{k+1}^{\rm (p)}
\end{bmatrix}$ belongs to the nullspace of 
$\widehat{J}_k$ and thus $\widehat{x}_{k+1}^{\rm (p)}\in\Span\{V_k\}$.  
The desired vector $x_{k+1}^{\rm (p)}$ is then given by  
$$
x_{k+1}^{\rm (p)} 
= \widehat{x}_{k+1}^{\rm (p)}/\|\widehat{x}_{k+1}^{\rm (p)}\|.
$$ 
Obviously, ${x}_{k+1}^{\rm (p)}\in\Span\{V_k\}$ and $\|x_{k+1}^{\rm (p)}\|=1$. 
The approximation error satisfies:  
\begin{align}
\|x_*-x_{k+1}^{\rm (p)}\| 
& \leq \|x_*-\widehat{x}_{k+1}^{\rm (p)}\|+
       \|\widehat{x}_{k+1}^{\rm (p)}-x_{k+1}^{\rm (p)}\| \nonumber \\
& =\|\Delta x^{\rm (p)}\|+\left|\|\widehat{x}_{k+1}^{\rm (p)}\|-1\right| 
       \nonumber \\ 
& \leq 2\|\Delta x^{\rm (p)}\| \label{eq:leprop1eqs2} \\ 
& \leq\frac{4(\|C\|+1)}{\sigma_{n}(\widehat{J}_*)} \epsilon^2,\label{eq:leprop1eqs3}
\end{align}
where in \eqref{eq:leprop1eqs2}, we use the definition $\widehat{x}_{k+1}^{\rm (p)} = x_*-\Delta x^{\rm (p)}$ and the following fact: 
\[
1-\|\Delta x^{\rm (p)}\|\leq \|x_*-\Delta x^{\rm (p)}\| 
\leq 1+\|\Delta x^{\rm (p)}\|.
\] 
For the inequality \eqref{eq:leprop1eqs3}, we use the fact that 
the minimum norm solution of \eqref{eq:jk1} satisfies
\begin{equation} \label{eq:xpbd} 
\|\Delta x^{\rm (p)}\|
\leq \frac{(\|C\|+1)}{\sigma_{n}(\widehat{J}_k)} \epsilon^2
\leq \frac{2(\|C\|+1)}{\sigma_{n}(\widehat{J}_*)} \epsilon^2,
\end{equation} 
where the second inequality is due to \Cref{Thm:behavior}.
This completes the proof. 
\end{proof} 

We call the vector $x_{k+1}^{\rm (p)}$ in \Cref{prop1} a 
``pre-optimal'' solution since $x_{k+1}^{{\rm (p)}H}Cx_{k+1}^{\rm (p)}$ 
does not necessarily vanish and is thus generally not 
the $k+1$-th iterate $x_{k+1}$. 
The next lemma shows that based on this pre-optimal solution, 
we can construct a vector $\widetilde{x}_{k+1}$ in $\Span\{V_k\}$
satisfying \eqref{eq:1b}. 

\begin{lemma}\label{prop2}
Let $\epsilon = \max\{|\mu_k-\mu_*|, |\lambda_k-\lambda_*|, \dist(x_k,\mathcal{X}_*)\}$. If $\epsilon\leq\min\{\epsilon_1,\epsilon_T\}$, where 
$\epsilon_1, \epsilon_T$ are defined in \Cref{Thm:behavior,thm:Ckindef}, 
then there exists
\begin{equation} \label{eq:xkdef} 
\widetilde{x}_{k+1}\in
\mathcal{X}_k \equiv \{x \mid x \in \Span\{V_k\}, x^HCx = 0, x^Hx = 1 \},
\end{equation} 
such that
\begin{equation} \label{eq:xk1t} 
\dist(\widetilde{x}_{k+1}, \mathcal{X}_*) \leq 
\left(\frac{16\|C\|}{\sqrt{-c_{1,*}c_{2,*}}}+4\right)
\frac{\|C\|+1}{\sigma_{n}(\widehat{J}_*)}\, \epsilon^2,
\end{equation}
\end{lemma}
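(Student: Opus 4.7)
The plan is to start from the pre-optimal solution $x_{k+1}^{\rm (p)} \in \Span\{V_k\}$ produced by \Cref{prop1} and to project it onto the null cone $\{x \in \Span\{V_k\} : x^HCx=0,\ \|x\|=1\}$ without losing more than an $O(\epsilon^2)$ amount of approximation to $\mathcal{X}_*$. The key enabling facts are (a) $V_k^HCV_k = \Diag(c_{1,k},c_{2,k})$ is indefinite with $c_{1,k},|c_{2,k}|$ bounded away from $0$ by \Cref{thm:Ckindef}, and (b) the residual $x_{k+1}^{\rm (p)H}Cx_{k+1}^{\rm (p)}$ is already quadratically small since its closest $\mathcal{X}_*$-partner $x_*$ satisfies $x_*^HCx_*=0$.

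First I would write $x_{k+1}^{\rm (p)} = V_k z$ with $z = (z_1,z_2)^T\in\mathbb{C}^2$, $\|z\|=1$. Setting $\rho := z^HC_kz = x_{k+1}^{\rm (p)H}Cx_{k+1}^{\rm (p)}$ and expanding around the closest $x_*\in\mathcal{X}_*$ with $\delta x = x_{k+1}^{\rm (p)}-x_*$, one obtains
\[
|\rho| = |2\Real(x_*^HC\delta x)+\delta x^HC\delta x|
\le 2\|C\|\|\delta x\| + \|C\|\|\delta x\|^2
\le 3\|C\|\,\dist(x_{k+1}^{\rm (p)},\mathcal{X}_*),
\]
where the last step uses that $\dist(x_{k+1}^{\rm (p)},\mathcal{X}_*)\le 1$ for $\epsilon$ small; the bound of \Cref{prop1} then yields $|\rho|=O(\epsilon^2)$.

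Next I would construct the projected coordinates explicitly. Using the indefiniteness of $C_k$, define
\[
\tilde z = \bigl(t_k\,\mathrm{sign}(z_1),\ s_k\,\mathrm{sign}(z_2)\bigr)^T,
\qquad t_k = \sqrt{\tfrac{-c_{2,k}}{c_{1,k}-c_{2,k}}},\ \
s_k = \sqrt{\tfrac{c_{1,k}}{c_{1,k}-c_{2,k}}},
\]
with an arbitrary unit phase substituted if $z_i=0$. Then $\|\tilde z\|=1$ and $\tilde z^HC_k\tilde z=0$, so $\widetilde{x}_{k+1} := V_k\tilde z$ belongs to $\mathcal{X}_k$ defined in \eqref{eq:xkdef}. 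The constraint equations $c_{1,k}|z_1|^2+c_{2,k}|z_2|^2=\rho$ and $|z_1|^2+|z_2|^2=1$ force $|z_1|^2-t_k^2 = \rho/(c_{1,k}-c_{2,k})$ and $|z_2|^2-s_k^2 = -\rho/(c_{1,k}-c_{2,k})$, from which the identity $(|z_i|-r)^2 = (|z_i|^2-r^2)^2/(|z_i|+r)^2 \le (|z_i|^2-r^2)^2/r^2$ and summing the two contributions give
\[
\|z-\tilde z\|^2
\le \frac{\rho^2}{(c_{1,k}-c_{2,k})^2}\Bigl(\frac{1}{t_k^2}+\frac{1}{s_k^2}\Bigr)
= \frac{\rho^2}{-c_{1,k}c_{2,k}},
\]
so $\|x_{k+1}^{\rm (p)}-\widetilde{x}_{k+1}\|=\|z-\tilde z\|\le |\rho|/\sqrt{-c_{1,k}c_{2,k}}$.

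Finally, I would assemble the estimate by the triangle inequality
$\dist(\widetilde{x}_{k+1},\mathcal{X}_*)\le \|x_{k+1}^{\rm (p)}-\widetilde{x}_{k+1}\| + \dist(x_{k+1}^{\rm (p)},\mathcal{X}_*)$, substitute the $|\rho|$ bound above together with the $O(\epsilon^2)$ bound of \Cref{prop1}, and use \Cref{thm:Ckindef}(ii) to replace $\sqrt{-c_{1,k}c_{2,k}}$ by $\tfrac12\sqrt{-c_{1,*}c_{2,*}}$; this produces exactly the prefactor $\bigl(16\|C\|/\sqrt{-c_{1,*}c_{2,*}}+4\bigr)(\|C\|+1)/\sigma_n(\widehat{J}_*)$. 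The smallness threshold on $\epsilon$ is $\min\{\epsilon_1,\epsilon_T\}$ so that both \Cref{prop1} and \Cref{thm:Ckindef} apply. The only delicate point is the projection step bounding $\|z-\tilde z\|$: it relies on the fact that $t_k$ and $s_k$ are bounded below (which in turn needs the uniform lower bounds on $|c_{1,k}|,|c_{2,k}|$ from \Cref{thm:Ckindef}), otherwise the $1/r^2$ factor could blow up; everything else is bookkeeping.
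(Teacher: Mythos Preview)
Your approach is essentially the same as the paper's: start from the pre-optimal $x_{k+1}^{\rm (p)}\in\Span\{V_k\}$ of \Cref{prop1}, bound the residual $\rho=x_{k+1}^{{\rm (p)}H}Cx_{k+1}^{\rm (p)}$, rotate the phases of the coordinates onto $(t_k,s_k)$, and use $\sqrt{-c_{1,k}c_{2,k}}\ge\tfrac12\sqrt{-c_{1,*}c_{2,*}}$ from \Cref{thm:Ckindef}. The projection estimate $\|z-\tilde z\|\le|\rho|/\sqrt{-c_{1,k}c_{2,k}}$ is exactly the paper's computation \eqref{eq:lemmaprop2eqs1}.

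The one slip is in the constant. Your expansion $\rho=2\Real(x_*^HC\delta x)+\delta x^HC\delta x$ gives $|\rho|\le 2\|C\|\|\delta x\|+\|C\|\|\delta x\|^2\le 3\|C\|\|\delta x\|$, which after the remaining steps yields $24\|C\|/\sqrt{-c_{1,*}c_{2,*}}$, not $16$. The paper instead writes
\[
x_{k+1}^{{\rm (p)}H}Cx_{k+1}^{\rm (p)}
=(x_{k+1}^{\rm (p)}-\widetilde x_*)^HCx_{k+1}^{\rm (p)}+\widetilde x_*^HC(x_{k+1}^{\rm (p)}-\widetilde x_*),
\]
and since both $x_{k+1}^{\rm (p)}$ and $\widetilde x_*$ are unit vectors this gives $|\rho|\le 2\|C\|\|\delta x\|$ directly, hence the factor $16$. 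With that adjustment your argument matches the paper's exactly.
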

\begin{proof} We prove by construction. 
First, for $x_{k+1}^{\rm (p)}$ determined in \Cref{prop1}, we have 
$x^{\rm (p)}_{k+1} \in \Span\{V_k\}$. 
Write $x_{k+1}^{\rm (p)}=t_{p}v_1+s_{p}v_2$ with $|t_p|^2+|s_p|^2=1$, 
where $V_k = \begin{bmatrix} v_1&v_2 \end{bmatrix}$. 

Let $\widetilde{x}_*$ be the vector 
in $\mathcal{X}_*$ closest to $x_{k+1}^{\rm (p)}$. Then we have
\begin{equation} \label{eq:xcxk}
\begin{aligned}
|x_{k+1}^{{\rm (p)}H}Cx_{k+1}^{\rm (p)}|
&=|(x_{k+1}^{\rm (p)}-\widetilde{x}_*)^HCx_{k+1}^{\rm (p)}+\widetilde{x}_*^HC(x_{k+1}^{\rm (p)}-\widetilde{x}_*)+\widetilde{x}_*^HC\widetilde{x}_*|\\
&= |(x_{k+1}^{\rm (p)}-\widetilde{x}_*)^HCx_{k+1}^{\rm (p)}
    +\widetilde{x}_*^HC(x_{k+1}^{\rm (p)}-\widetilde{x}_*)|\\
&\leq 2\|C\|\|x_{k+1}^{\rm (p)}-\widetilde{x}_*\|.
\end{aligned}
\end{equation} 
Define
\begin{equation}\label{eq:tsdef}
\widetilde{x}_{k+1}=tv_1+sv_2,
\end{equation} 
where 
\begin{equation}\label{eq:definets}
t = \sign(t_p)\sqrt{\frac{-c_{2,k}}{c_{1,k}-c_{2,k}}},\quad 
s = \sign(s_p)\sqrt{\frac{c_{1,k}}{c_{1,k}-c_{2,k}}}.
\end{equation}
Obviously, $\|\widetilde{x}_{k+1}\|=1$ and 
$\widetilde{x}_{k+1}^HC\widetilde{x}_{k+1}=0$. 
This implies $\widetilde{x}_{k+1}\in\mathcal{X}_k$, and 
\begin{equation}\label{eq:xxpdiff}
\|\widetilde{x}_{k+1} - x_{k+1}^{\rm (p)}\| 
= \|(t-t_p)v_1 + (s-s_p)v_2\| 
=  \sqrt{||t|-|t_p||^2 + ||s|-|s_p||^2}.
\end{equation}
where the last equality results from $v_1\perp v_2$ 
and the sign of $t,s$.

Next we derive an upper bound of the right-hand-side of
equation~\eqref{eq:xxpdiff}.   Note that
\begin{align*}
x_{k+1}^{{\rm (p)}H}Cx_{k+1}^{\rm (p)} & = |t_p|^2c_{1,k}+(1-|t_p|^2)c_{2,k}, \\
0=\widetilde{x}_{k+1}^H C \widetilde{x}_{k+1} & = |t|^2c_{1,k}+(1-|t|^2)c_{2,k};
\end{align*}
Substracting the previous two equations, we have
\[
-x_{k+1}^{{\rm (p)}H}Cx_{k+1}^{\rm (p)} = 
(|t|^2-|t_p|^2)(c_{1,k}-c_{2,k}) =  
(|t|-|t_p|)(|t|+|t_p|)(c_{1,k}-c_{2,k}).
\]
Similarly, in terms of $s$, 
\[
x_{k+1}^{{\rm (p)}H}Cx_{k+1}^{\rm (p)} = 
(|s|-|s_p|)(|s|+|s_p|)(c_{1,k}-c_{2,k}).
\]
Therefore, 
\begin{equation}\label{eq:lemmaprop2eqs1}
\begin{aligned}
\sqrt{||t|-|t_p||^2 + ||s|-|s_p||^2}&=\sqrt{\left(\frac{1}{|t|+|t_p|}\right)^2+\left(\frac{1}{|s|+|s_p|}\right)^2}\frac{|x_{k+1}^{{\rm (p)}H}Cx_{k+1}^{\rm (p)}|}{c_{1,k}-c_{2,k}}\\
&\leq\sqrt{\frac{1}{|t|^2}+\frac{1}{|s|^2}}\frac{|x_{k+1}^{{\rm (p)}H}Cx_{k+1}^{\rm (p)}|}{c_{1,k}-c_{2,k}}\\
&=\sqrt{\frac{c_{1,k}-c_{2,k}}{-c_{2,k}}+\frac{c_{1,k}-c_{2,k}}{c_{1,k}}}\frac{|x_{k+1}^{{\rm (p)}H}Cx_{k+1}^{\rm (p)}|}{c_{1,k}-c_{2,k}}\\
&=\frac{|x_{k+1}^{{\rm (p)}H}Cx_{k+1}^{\rm (p)}|}{\sqrt{-c_{1,k}c_{2,k}}}
\end{aligned}
\end{equation}
By \Cref{thm:Ckindef}, 
\begin{equation}\label{eq:prop2:1}
\sqrt{-c_{1,k}c_{2,k}}\geq \frac{1}{2}\sqrt{-c_{1,*}c_{2,*}}.
\end{equation}
Thus from \eqref{eq:xcxk} and \eqref{eq:prop2:1}, we have
\begin{equation}\label{eq:prop2:2}
\sqrt{||t|-|t_p||^2 + ||s|-|s_p||^2} \leq 
\frac{4\|C\|\|x_{k+1}^{\rm (p)}-\widetilde{x}_*\|}{\sqrt{-c_{1,*}c_{2,*}}}.
\end{equation}
Finally, by \Cref{prop1}, we have 
\begin{equation}\label{eq:lemmaprop2eqs2}
\dist(\widetilde{x}_{k+1},\mathcal{X}_*)
\leq \|\widetilde{x}_{k+1}-x_{k+1}^{\rm (p)}\| +\dist(x_{k+1}^{\rm (p)},\mathcal{X}_*)
\leq \left(\frac{16\|C\|}{\sqrt{-c_{1,*}c_{2,*}}}+4\right)\frac{\|C\|+1}{\sigma_{n}(\widehat{J}_*)}\epsilon^2.
\end{equation}
This completes the proof.  
\end{proof} 

The next result shows there exists a 2D-Ritz triplet sufficiently 
close to the target 2D-eigentriplet.

\begin{lemma}\label{lemma:xkp}
Let $\epsilon = \max\{|\mu_k-\mu_*|, |\lambda_k-\lambda_*|,
\dist(x_k,\mathcal{X}_*)\}$. Then if 
$\epsilon\leq\min\{\epsilon_1,\epsilon_2,\epsilon_T\}$, 
where $\epsilon_1,\epsilon_2,\epsilon_T$ are defined in 
\Cref{Thm:behavior,Thm:behavior2,thm:Ckindef}, respectively, we have
\begin{enumerate}[(i)]  
\item 2DRQ $(A_k,C_k)$ has two 2D Ritz values 
$(\nu_{k,j},\theta_{k,j})$ for $j=1, 2$.

\item 
2DRQ $(A_k,C_k)$ has at least one of 2D Ritz triplets 
$(\nu_{k,j},\theta_{k,j},x_{k,j})$ for $j=1, 2$ 
satisfying 
\begin{equation} \label{eq:Ritzbd} 
|\nu_{k,j}-\mu_*|\leq \kappa_1\epsilon^2, \quad
|\theta_{k,j}-\lambda_*|\leq \kappa_2\epsilon^4,  \quad
\dist(x_{k,j},\mathcal{X}_*) \leq \kappa_3 \epsilon^2,
\end{equation} 
where $\kappa_1,\kappa_2,\kappa_3>0$ are constants depending on 
$(A,C,\mu_*,\lambda_*)$.  
\end{enumerate} 
\end{lemma}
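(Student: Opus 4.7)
My plan is to dispatch part (i) directly from the preceding lemmas and to prove part (ii) by showing that $(\mu_*,\lambda_*,V_k^H\hat{x}_*)$---with $\hat{x}_*$ the element of $\mathcal{X}_*$ supplied by \Cref{prop1}---is an $O(\epsilon^2)$ approximate 2D-eigentriplet of the $2\times 2$ 2DEVP on $(A_k,C_k)$, and then invoking a quantitative implicit-function-theorem estimate to extract a genuine 2D-Ritz triplet within $O(\epsilon^2)$. For part (i), since $\epsilon\le\epsilon_2\le\epsilon_T$, \Cref{Thm:behavior2} combined with \Cref{lem:cstar}(ii) gives $|a_{12,k}|\ge|a_{12,*}|/2>0$, while \Cref{thm:Ckindef} gives that $C_k$ is indefinite; by Section~3 of Part~I \cite{2DEVPI}, the $2\times 2$ 2DEVP $(A_k,C_k)$ therefore has exactly two distinct simple 2D-eigentriplets of the form \eqref{def:2deigsimple}.

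For part (ii), I would first apply \Cref{prop1} to obtain $\hat{x}_*\in\mathcal{X}_*$ and $x_{k+1}^{\rm (p)}\in\Span\{V_k\}$ with $\|x_{k+1}^{\rm (p)}-\hat{x}_*\|=O(\epsilon^2)$. Writing $P_k=V_kV_k^H$, this immediately yields $\|(I-P_k)\hat{x}_*\|\le\|\hat{x}_*-x_{k+1}^{\rm (p)}\|=O(\epsilon^2)$. Setting $z_*=V_k^H\hat{x}_*$ and using the identities $(A-\mu_*C-\lambda_*I)\hat{x}_*=0$, $\hat{x}_*^HC\hat{x}_*=0$, and $V_k^H(I-P_k)=0$, a direct expansion will give
\[
\|(A_k-\mu_*C_k-\lambda_*I)z_*\|=O(\epsilon^2),\quad |z_*^HC_kz_*|=O(\epsilon^2),\quad |1-z_*^Hz_*|=O(\epsilon^4),
\]
so $(\mu_*,\lambda_*,z_*)$ is an $O(\epsilon^2)$ approximate 2D-eigentriplet of the $2\times 2$ problem.

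Next, by part (i) together with \Cref{lem:cstar} and \Cref{Thm:singularity}, the two exact 2D-eigentriplets of $(A_*,C_*)$ are nonsingular; the closeness of $(A_k,C_k)$ to $(A_*,C_*)$ combined with a Weyl-type argument shows that the $2\times 2$ Jacobians at the nearby exact triplets of $(A_k,C_k)$ remain nonsingular with a uniform lower bound on their smallest singular value when $\epsilon$ is small. A quantitative Newton / implicit-function estimate then produces an exact 2D-eigentriplet $(\nu_{k,j},\theta_{k,j},z_{k,j})$ of $(A_k,C_k)$ within $O(\epsilon^2)$ of $(\mu_*,\lambda_*,z_*)$ in all three components. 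Setting $x_{k,j}=V_kz_{k,j}$, the triangle inequality yields $\|x_{k,j}-\hat{x}_*\|\le\|z_{k,j}-z_*\|+\|(I-P_k)\hat{x}_*\|=O(\epsilon^2)$, proving $\dist(x_{k,j},\mathcal{X}_*)\le\kappa_3\epsilon^2$, while the $\nu$-component of the estimate delivers $|\nu_{k,j}-\mu_*|\le\kappa_1\epsilon^2$. Finally, since $x_{k,j}^HCx_{k,j}=z_{k,j}^HC_kz_{k,j}=0$ and $\|x_{k,j}\|=1$, \Cref{Thm:lsq} applies (with $x_*$ replaced by $\hat{x}_*$, which is itself a 2D-eigenvector) to yield the quartic bound
\[
|\theta_{k,j}-\lambda_*|=|x_{k,j}^HAx_{k,j}-\lambda_*|\le\|A-\mu_*C-\lambda_*I\|\cdot\|x_{k,j}-\hat{x}_*\|^2=O(\epsilon^4).
\]

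The main obstacle I anticipate is the quantitative implicit-function step: one must bound the smallest singular value of the $2\times 2$ Jacobian from below and control its quadratic Taylor remainder uniformly in $\epsilon$, so that the resulting constants $\kappa_1,\kappa_2,\kappa_3$ depend only on $(A,C,\mu_*,\lambda_*)$. This can be handled either abstractly via the implicit function theorem applied to the nonlinear system $F(\nu,\theta,z)=0$ defining the $2\times 2$ 2DEVP, or more concretely by Taylor-expanding the closed-form 2D-eigenvalue formulas in \eqref{eq:mulamalpha} around the exact values and exploiting the indefinite and well-conditioned $C_k$ supplied by \Cref{thm:Ckindef}.
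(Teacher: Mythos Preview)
Your strategy for part (i) matches the paper's. For part (ii) the paper takes a different, more concrete route than your proposed implicit-function argument: it first uses \Cref{prop2} (not just \Cref{prop1}) to produce $\widetilde{x}_{k+1}\in\Span\{V_k\}$ that \emph{exactly} satisfies $\widetilde{x}_{k+1}^HC\widetilde{x}_{k+1}=0$ and $\|\widetilde{x}_{k+1}\|=1$, so that $\widetilde{x}_{k+1}=V_kz(\widetilde\alpha)$ for some $|\widetilde\alpha|=1$ in the explicit parametrization \eqref{eq:mulamalpha}. It then shows the complex quantity $\nu(\widetilde\alpha)$ has small imaginary part (because $\widetilde{x}_{k+1}$ is $O(\epsilon^2)$-close to $\mathcal X_*$), computes $\imag\nu(\widetilde\alpha)=\imag(\widetilde\alpha\,a_{12,k})/\sqrt{-c_{1,k}c_{2,k}}$, and deduces that $\widetilde\alpha$ is $O(\epsilon^2)$-close to one of $\alpha_{k,1},\alpha_{k,2}=\pm\,\overline{a}_{12,k}/|a_{12,k}|$. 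This directly yields $\|x_{k,j}-\widetilde{x}_{k+1}\|=O(\epsilon^2)$; the bounds on $\nu_{k,j}$ and $\theta_{k,j}$ then follow from projecting the residual equation and from \Cref{Thm:lsq}, exactly as in your final paragraph.

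Your abstract route has a genuine technical obstacle you should be aware of. The map $F(\nu,\theta,z)$ defining the $2\times2$ 2DEVP is not holomorphic in $z$ (the constraints involve $z^H$), so nonsingularity of the \emph{complex} Jacobian in \eqref{eq:jacobi} does not by itself license a Newton--Kantorovich step. Viewed over $\mathbb R$, the phase gauge $z\mapsto e^{i\phi}z$ sends solutions to solutions, hence the real Jacobian at any solution has $(iz,0,0)$ in its kernel and is singular; to run an IFT argument you must first fix the gauge (e.g.\ force one component of $z$ real) and then verify uniform invertibility of the \emph{reduced} Jacobian. Relatedly, \Cref{lem:cstar} and \Cref{Thm:singularity} do not by themselves assert that the two 2D-eigentriplets of $(A_*,C_*)$ are nonsingular: \Cref{lem:cstar} gives simplicity, but \Cref{Thm:singularity}(i) still requires $\hat\lambda''(\mu_*)\neq0$ for the $2\times2$ eigencurve, which you would have to check separately (it is true, and follows from $a_{12,*}\neq0$ by a short direct computation of the $2\times2$ eigencurves, but it is not one of the cited statements). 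Your alternative of Taylor-expanding the closed-form formulas \eqref{eq:mulamalpha} is essentially what the paper does, and is the cleanest way to close the argument; it avoids the gauge issue entirely because the $\alpha$-parametrization has already quotiented out the global phase.
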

\begin{proof} 
Let $\epsilon\leq\min\{\epsilon_1,\epsilon_2,\epsilon_T\}$. 
By \Cref{Thm:behavior2}, $a_{12,k} \equiv (V^H_k A V_k)_{1,2} \neq 0$. 
Thus the $2\times2$ 2DEVP~\eqref{projectEVP} has two 2D-eigentriplets 
(see \Cref{sec:rqi}): 	
\[
(\nu(\alpha_{k,j}), \theta(\alpha_{k,j}), z(\alpha_{k,j})), 
\quad j = 1,2,
\]
where $\nu(\cdot),\theta(\cdot), z(\cdot)$ 
are defined in \eqref{eq:mulamalpha}, and
\[
\alpha_{k,1} = \frac{\overline{a}_{12,k}}{|a_{12,k}|},\quad 
\alpha_{k,2} = -\frac{\overline{a}_{12,k}}{|a_{12,k}|}.
\]
This proves the result(i). 

We note consider the result (ii). 
First, since $\epsilon\leq\min\{\epsilon_1, \epsilon_T\}$, 
by \Cref{prop2},  
$\widetilde{x}_{k+1}=tv_1+sv_2\in \mathcal{X}_k$ 
is well defined with $t,s$ defined by \eqref{eq:tsdef} and
$V_k = \begin{bmatrix} v_1&v_2 \end{bmatrix}$.
For brevity, let $\widetilde{x}_{k+1} := \overline{\sign(t)}\widetilde{x}_{k+1}$. 
Then $\dist(\widetilde{x}_{k+1},\mathcal{X}_*)$ is invariant. 
By equations \eqref{eq:definets} and \eqref{eq:mulamalpha}, 
$\widetilde{x}_{k+1}$ can be further written as
\begin{equation} \label{eq:xvzexpress} 
\widetilde{x}_{k+1} = V_kz(\widetilde{\alpha}), \quad |\widetilde{\alpha}|=1.
\end{equation}
Note that the 2D Ritz triplets are given by 
\[
(\nu_{k,i}, \theta_{k,i}, x_{k,i}) =   
(\nu(\alpha_{k,i}), \theta(\alpha_{k,i}), V_kz(\alpha_{k,i})) 
\quad \mbox{for $i=1,2$}.
\]

We next prove that one of $\alpha_{k,i}$ is a good approximation 
to $\widetilde{\alpha}$. 
We begin by showing that the imaginary part of $\nu(\widetilde{\alpha})$ 
is small by utilizing $\widetilde{x}_{k+1}$ is close to $\mathcal{X}_*$.
Let $\Delta x = \widetilde{x}_*-\widetilde{x}_{k+1}$, where 
$\widetilde{x}_*$ is the vector in $\mathcal{X}_*$ closest 
to $\widetilde{x}_{k+1}$. 
Starting with \eqref{eq:mulamalpha}, we have
\begin{align}
\nu(\widetilde{\alpha})
&=\widetilde{x}_{k+1}^{H}CV_kV_k^HA\widetilde{x}_{k+1}/
\|C_kz(\widetilde{\alpha})\|^2 \quad \mbox{(due to \eqref{eq:xvzexpress})} 
\nonumber \\
&=\widetilde{x}_{k+1}^{H}CV_kV_k^HA(\widetilde{x}_*-\Delta x)/
\|C_kz(\widetilde{\alpha})\|^2  \nonumber \\
&=\widetilde{x}_{k+1}^{H}CV_kV_k^HA\widetilde{x}_*/
\|C_kz(\widetilde{\alpha})\|^2+h_1 \quad \mbox{($h_1$ denotes remaining terms)} 
\nonumber \\
&=\mu_*\widetilde{x}_{k+1}^{H}CV_kV_k^HC\widetilde{x}_*/
\|C_kz(\widetilde{\alpha})\|^2+\lambda_*\widetilde{x}_{k+1}^{H}CV_kV_k^H\widetilde{x}_*/
\|C_kz(\widetilde{\alpha})\|^2+h_1 \nonumber \\
&=\mu_*\widetilde{x}_{k+1}^{H}CV_kV_k^HC\widetilde{x}_{k+1}/
\|C_kz(\widetilde{\alpha})\|^2+\lambda_*\widetilde{x}_{k+1}^{H}CV_kV_k^H\widetilde{x}_{k+1}/
\|C_kz(\widetilde{\alpha})\|^2+h_2 \nonumber \\
&= \mu_*\widetilde{x}_{k+1}^{H}CV_kV_k^HC\widetilde{x}_{k+1}/
\|C_kz(\widetilde{\alpha})\|^2+\lambda_*\widetilde{x}_{k+1}^{H}C\widetilde{x}_{k+1}/
\|C_kz(\widetilde{\alpha})\|^2+h_2 \nonumber \\
&=\mu_* \widetilde{x}_{k+1}^{H}CV_kV_k^HC\widetilde{x}_{k+1}/
\|C_kz(\widetilde{\alpha})\|^2+h_2, \label{eq:mutildealpha}
\end{align}
where
\[
h_2 = -\frac{\widetilde{x}_{k+1}^HCV_kV_k^HA\Delta x}{\|C_kz(\widetilde{\alpha})\|^2}+\frac{\mu_*\widetilde{x}_{k+1}^HCV_kV_k^HC\Delta x}{\|C_kz(\widetilde{\alpha})\|^2}+\frac{\lambda_*\widetilde{x}_{k+1}^HCV_kV_k^H\Delta x}{\|C_kz(\widetilde{\alpha})\|^2}
\] 
and the 6-th equality comes from $\widetilde{x}_{k+1}\in\Span \{V_k\}$. 
To estimate $h_2$, note that
\begin{equation}\label{eq:severalIdentities}
\begin{aligned} 
\|\widetilde{x}_{k+1}^HCV_k\| &=\|z(\widetilde{\alpha})^HV_k^HCV_k\|
=\|z(\widetilde{\alpha})^HC_k\| \\
\|C_kz(\alpha)\|& =\sqrt{-c_{1,k}c_{2,k}},\quad \forall \alpha: |\alpha|=1. 
\end{aligned} 
\end{equation}  
Then
\[
\begin{aligned}
|h_2| &=\left|-\frac{\widetilde{x}_{k+1}^HCV_kV_k^H(A-\mu_*C-\lambda_*I)\Delta x}{\|C_kz(\widetilde{\alpha})\|^2}\right|\\
&\leq \frac{\|A-\mu_*C-\lambda_*I\|}{\|C_kz(\widetilde{\alpha})\|}\|\Delta x\|
= \frac{\|A-\mu_*C-\lambda_*I\|}{\sqrt{-c_{1,k}c_{2,k}}}\|\Delta x\|.
\end{aligned}
\]
Since $\mu_* \widetilde{x}_{k+1}^{H}CV_kV_k^HC\widetilde{x}_{k+1}/\|C_kz(\alpha)\|^2$ is real, \eqref{eq:mutildealpha} implies
	\begin{equation}\label{eq:imagmutilde}
		|\imag\nu(\widetilde{\alpha})|\leq \frac{\|A-\mu_*C-\lambda_*I\|}{\sqrt{-c_{1,k}c_{2,k}}}\|\Delta x\|.
	\end{equation}
	On the other hand, by \eqref{eq:mulamalpha}, straight calculation shows
	\begin{equation}\label{eq:imagmu}
		\imag\nu(\widetilde{\alpha}) = \imag\frac{c_{1,k}\widetilde{\alpha} a_{12,k}+c_{2,k}\overline{\widetilde{\alpha} a_{12,k}}}{(c_{1,k}-c_{2,k})\sqrt{-c_{1,k}c_{2,k}}}=\imag\frac{\widetilde{\alpha} a_{12,k}}{\sqrt{-c_{1,k}c_{2,k}}}.
	\end{equation}		
By \eqref{eq:imagmutilde} and \eqref{eq:imagmu}, $\widetilde{\alpha}$ satisfies
\begin{equation}\label{eq:alphaa12}
|\imag (\widetilde{\alpha}a_{12,k})|\leq\|A-\mu_*C-\lambda_*I\|\|\Delta x\|.
\end{equation}
Let $e^{\ii\theta_0}=\frac{\overline{a}_{12,k}}{|a_{12,k}|}$ and $\widetilde{\alpha}=e^{\ii\widetilde{\theta}}e^{\ii\theta_0}$, where $\theta_0,\widetilde{\theta}\in(-\pi,\pi]$. Then \eqref{eq:alphaa12} turns to 
	\[
	|\sin\widetilde{\theta}||a_{12,k}|\leq \|A-\mu_*C-\lambda_*I\|\|\Delta x\|.
	\]
Therefore,
\begin{equation}\label{eq:sinTildeTheta}
|\sin\widetilde{\theta}|\leq 
\frac{\|A-\mu_*C-\lambda_*I\|}{|a_{12,k}|}\|\Delta x\|.
\end{equation}
When $\cos\widetilde{\theta}>0$, we let $j=1$; otherwise we let $j=2$. 
Utilizing expressions $\alpha_{k,i}=\pm e^{\ii \theta_0}, i=1,2$, 
$\widetilde{\alpha}=e^{\ii\widetilde{\theta}}e^{\ii \theta_0}$ 
and \eqref{eq:sinTildeTheta}, for the chosen $j$, we have
\begin{align}
|\alpha_{k,j}-\widetilde{\alpha}|
&=\left||\cos\widetilde{\theta}|-1+\ii \sin\widetilde{\theta}\right| \\ 
& \leq\sqrt{2-2|\cos{\widetilde{\theta}}|}
\leq \sqrt{2-2\cos^2{\widetilde{\theta}}}\\
& = \sqrt{2}|\sin{\widetilde{\theta}}|
\leq \frac{\sqrt{2}\|A-\mu_*C-\lambda_*I\|}{|a_{12,k}|}\|\Delta x\|.
\label{eq:lemmaxkpeqs1}
\end{align}
This implies $\alpha_{k,j}$ is a good approximation to $\widetilde{\alpha}$. 
	
We now consider the distance between $(\nu_{k,j},\theta_{k,j},x_{k,j})$
and $(\mu_*,\lambda_*,\mathcal{X}_*)$  for the chosen $j$.
Straight calculation shows
\begin{equation}\label{eq:xkiminusxkplus}
\begin{aligned}
\|x_{k,j}-\widetilde{x}_{k+1}\|&=\|V_kz(\alpha_{k,j})-V_kz(\widetilde{\alpha})\|\\
&=\|z(\alpha_{k,j})-z(\widetilde{\alpha})\|\\
&=\frac{\sqrt{c_{1,k}}}{\sqrt{c_{1,k}-c_{2,k}}}|\alpha_{k,j}-\widetilde{\alpha}|\\
&\leq |\alpha_{k,j}-\widetilde{\alpha}|\\
&\leq \sqrt{2}\frac{\|A-\mu_*C-\lambda_*I\|}{|a_{12,k}|}\|\Delta x\|.
\end{aligned}
\end{equation}
Then according to \eqref{eq:xkiminusxkplus}, \Cref{thm:Ckindef,prop2}, 
we immediately have
\begin{align}
\dist(x_{k,j},\mathcal{X}_*)&\leq \|x_{k,j}-\widetilde{x}_{k+1}\|+\dist(\widetilde{x}_{k+1},\mathcal{X}_*) \nonumber \\
&\leq \sqrt{2}\frac{\|A-\mu_*C-\lambda_*I\|}{|a_{12,k}|}\|\Delta x\|+\|\Delta x\| \label{eq:lemmaxkpeqs3}\\
&\leq 2\sqrt{2}\frac{\|A-\mu_*C-\lambda_*I\|}{|a_{12,*}|}\|\Delta x\|+\|\Delta x\|\nonumber \\
&=\left(2\sqrt{2}\frac{\|A-\mu_*C-\lambda_*I\|}{|a_{12,*}|}+1\right)\left(\frac{16\|C\|}{\sqrt{-c_{1,*}c_{2,*}}}+4\right)\frac{\|C\|+1}{\sigma_{n}(\widehat{J}_*)}\epsilon^2 := \kappa_3 \epsilon^2. \label{eq:lemmaxkpeqs2}
\end{align}
Thus we have proved the result for approximate 2D-eigenvector $x_{k,j}$.
	
Now denote $\widehat{x}_*$ as the vector in $\mathcal{X}_*$ closest to $x_{k,j}$. 
The equation $(A-\mu_*C-\lambda_*I)\widehat{x}_*=0$ can be rewritten as    
\[ 
(A-\mu_*C-\lambda_*I)x_{k,j}= (A-\mu_*C-\lambda_*I)(x_{k,j}-\widehat{x}_*). 	
\] 
Multiplying $V_k^H$ on the left and utilizing 
$x_{k,j} = V_k z(\alpha_{k,j})$, we have		
\begin{equation}\label{eq:subprobres}
(A_k-\mu_*C_k-\lambda_*I)z(\alpha_{k,j})
=V_k^H(A-\mu_*C-\lambda_*I)(x_{k,j}-\widehat{x}_*) \equiv r.
\end{equation}
and 
\begin{equation}\label{eq:muproject}
\|r\| \leq \|A-\mu_*C-\lambda_*I\|\|x_{k,j}-\widehat{x}_*\|
\leq \|A-\mu_*C-\lambda_*I\|\kappa_3\epsilon^2.
\end{equation}
	
We now show that $\nu_{k,j}$ is a good approximation to $\mu_*$. 
Multiplying $z(\alpha_{k,j})^HC_k$ on the left of \eqref{eq:subprobres} 
and using $z(\alpha_{k,j})^HC_kz(\alpha_{k,j})=0$, we have
\begin{equation}\label{eq:lemmaxkpseqs5}
\mu_*=\frac{z(\alpha_{k,j})^HC_kA_kz(\alpha_{k,j})-z(\alpha_{k,j})^HC_kr}
           {\|C_kz(\alpha_{k,j})\|^2}
=\nu_{k,j} -\frac{z(\alpha_{k,j})^HC_kr}{\|C_kz(\alpha_{k,j})\|^2}.
\end{equation}
By \Cref{thm:Ckindef}, \eqref{eq:severalIdentities} and \eqref{eq:muproject}, 
\begin{align}
\frac{\|z(\alpha_{k,j})^HC_kr\|}{\|C_kz(\alpha_{k,j})\|^2}
&\leq \frac{\|r\|}{\|C_kz(\alpha_{k,j})\|} 
=\frac{\|r\|}{\sqrt{-c_{1,k}c_{2,k}}} \nonumber \\
&\leq\frac{2\|r\|}{\sqrt{-c_{1,*}c_{2,*}}} 
\leq\frac{2\|A-\mu_*C-\lambda_*I\|\kappa_3\epsilon^2}{\sqrt{-c_{1,*}c_{2,*}}} \
\equiv \kappa_1\epsilon^2. \label{eq:lemmaxkpseqs6}
\end{align}
Thus we have proved the desired error bound for $\nu_{k,j}$: 
\begin{equation} \label{eq:nubd} 
|\nu_{k,j}-\mu_*|\leq \kappa_1\epsilon^2.
\end{equation} 
	
Finally, we show $\theta_{k,j} = \theta(\alpha_{k,j})$ is a good 
approximation to $\lambda_*$. 
By the definition of $\theta(\alpha)$, 
\[  
\theta_{k,j}  = \theta(\alpha_{k,j})  
=z^H(\alpha_{k,j})A_kz(\alpha_{k,j}) 
 = x_{k,j}^HAx_{k,j}.
\]
Since $x^H_{k,j} C x_{k,j} = 0$ and $x_{k,j}^Hx_{k,j}=1$, 
by \Cref{Thm:lsq},  we have
\begin{equation}\label{eq:lamboundstype1}
|\theta_{k,j} - \lambda_*| \leq 
\|A-\mu_*C-\lambda_*I\|\|\widehat{x}_*-x_{k,j}\|^2 
\leq \|A-\mu_*C-\lambda_*I\|\kappa_3^2\epsilon^4.
\end{equation}
In summary, by \eqref{eq:lemmaxkpeqs2}, \eqref{eq:nubd} 
and \eqref{eq:lamboundstype1}, we have the bounds \eqref{eq:Ritzbd} 
when 
$\epsilon\leq\min\{\epsilon_1,\epsilon_2,\epsilon_T\}$. 
\end{proof} 

\paragraph{Proof of \Cref{Thm:convtype1}.}  
With \Cref{lemma:xkp}, {the $k+1$ approximate 2D-eigentriplet 
$(\mu_{k+1},\lambda_{k+1},x_{k+1})$ will be determined 
as in \eqref{eq:update1a}}. We only need to prove we will choose 
the desired 2D-Ritz triplet. Without loss of generality, 
we assume $j=1$ in \Cref{lemma:xkp}. 
	
Let $\epsilon\leq\min\{\epsilon_1,\epsilon_2,\epsilon_T\}$, 
where $\epsilon_1,\epsilon_2,\epsilon_T$ are defined in 
\Cref{Thm:behavior,Thm:behavior2,thm:Ckindef}, respectively. Then we have
\begin{equation}
\begin{aligned}
|\nu_{k,1}-\mu_{k}|&\leq |\nu_{k,1}-\mu_*|+|\mu_*-\mu_{k}|\leq \epsilon+\kappa_1\epsilon^2,\\
|\theta_{k,1}-\lambda_{k}|&\leq |\theta_{k,1}-\lambda_*|+|\lambda_*-\lambda_{k}|\leq \epsilon+\kappa_2\epsilon^4,
\end{aligned}
\end{equation}
On the other hand, straight calculation shows 
\begin{equation}
\begin{aligned}
|\theta_{k,1}-\theta_{k,2}| 
&= |z(\alpha_{k,1})^HA_kz(\alpha_{k,1})-z(\alpha_{k,2})^HA_kz(\alpha_{k,2})|\\
& = \frac{4|a_{12,k}|\sqrt{-c_{1,k}c_{2,k}}}{c_{1,k}-c_{2,k}}
\geq \frac{2|a_{12,*}|\sqrt{-c_{1,*}c_{2,*}}}{3(c_{1,*}-c_{2,*})}.
\end{aligned}
\end{equation}
Therefore,
\[
|\nu_{k,1}-\mu_{k}|+|\theta_{k,1}-\lambda_{k}|
\leq  2\epsilon+(\kappa_1+\kappa_2\epsilon^2)\epsilon^2
\]
and
\[
|\nu_{k,2}-\mu_{k}|+|\theta_{k,2}-\lambda_{k}|
\geq |\theta_{k,2}-\theta_{k,1}|-|\theta_{k,1}-\lambda_{k}|
\geq \frac{2|a_{12,*}|\sqrt{-c_{1,*}c_{2,*}}}
          {3(c_{1,*}-c_{2,*})}-\epsilon-\kappa_2\epsilon^4.
\]
Denote $T\equiv\frac{2|a_{12,*}|\sqrt{-c_{1,*}c_{2,*}}}{3(c_{1,*}-c_{2,*})}$. 
When 
\[
2\epsilon+(\kappa_1+\kappa_2\epsilon^2)\epsilon^2
\leq \frac{T}{2}-\epsilon-\kappa_2\epsilon^4,
\]
for example, let $\epsilon\leq\min\left\{\frac{T}{8},
\sqrt{\frac{T}{8(\kappa_1+2\kappa_2)}},1\right\}$, the 
following strict inequality holds 
\[
|\nu_{k,1}-\mu_{k}|+|\theta_{k,1}-\lambda_{k}|
< |\nu_{k,2}-\mu_{k}|+|\theta_{k,2}-\lambda_{k}|. 
\]	
Thus we will choose $(\nu_{k,1},\theta_{k,1},x_{k,1})$ as 
the next update $(\mu_{k+1},\lambda_{k+1},x_{k+1})$. 
	
Combining with \Cref{lemma:xkp}, the theorem is proven by 
letting $\epsilon_0=\min\left\{\epsilon_1,\epsilon_2,\epsilon_T,\frac{T}{8},
\sqrt{\frac{T}{8(\kappa_1+2\kappa_2)}},1\right\}$. 
\hfill $\Box$


\section{Convergence analysis for nonsingular multiple 2D-eigentriplets}
\label{sec:converge_analysis_II}

In this section, we prove that the 2DRQI is locally quadratically convergent for computing a nonsingular multiple 2D-eigentriplet.

\subsection{Properties of $J_k$}   

Let $(\mu_k, \lambda_k, x_k)$ be the $k$-th iterate to a nonsingular multiple 2D-eigentriplet $(\mu_*,\lambda_*,x_*)$, where $x_*$ is the vector in $\mathcal{X}_*$ closest to $x_k$ and $\mathcal{X}_*$ is the set of 2D eigenvectors defined in \eqref{eq:XstarStructureII}. The following lemma shows that when $(\mu_k,\lambda_k,x_k)$ is sufficiently close to 
$(\mu_*,\lambda_*,\mathcal{X}_*)$, Jacobian $J_k = J(\mu_k, \lambda_k, x_k)$ is nonsingular.

\begin{lemma}\label{Thm:behaviorII}
Assume $|\mu_k-\mu_*|\leq\epsilon^2$, $|\lambda_k-\lambda_*|\leq\epsilon^2$, 
and $\dist(x_k,\mathcal{X}_*)\leq\epsilon$ for some $\epsilon\leq1$.
There exists $\epsilon_1>0$ depending on $(A,C,\mu_*,\lambda_*)$, 
such that if $\epsilon\leq\epsilon_1$,   
\begin{enumerate}[(i)] 
\item  $\sigma_{\min}(J_k) \geq\frac{1}{2} \sigma_{\min,J_*}$,

\item  $\sigma_{n}(\widehat{J}_k) \geq\frac{1}{2} \sigma_{n,\widehat{J}_*}$, 

\item $\sigma_{n}(\widehat{J}(\mu_*,\lambda_*,x_k)) \geq
\frac{1}{2} \sigma_{n,\widehat{J}_*}$,
\end{enumerate} 
where $\sigma_{\min,J_*}$ and $\sigma_{n,\widehat{J}_*}$ are defined 
in \Cref{sec:propmulti2D}.
\end{lemma}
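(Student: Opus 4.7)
The plan is to mimic the strategy used for the simple-case counterpart \Cref{Thm:behavior}, namely reduce everything to a singular-value perturbation argument via Weyl's theorem, but with one extra subtlety: because the multiple-eigenvalue set $\mathcal{X}_*$ is a two-parameter family of unit 2D-eigenvectors, I cannot compare $J_k$ with a single fixed $J_*$; instead I will compare it to $J(\mu_*,\lambda_*,x_*)$ where $x_*\in\mathcal{X}_*$ is the vector realising $\dist(x_k,\mathcal{X}_*)$, and then invoke the positive infima $\sigma_{\min,J_*}$ and $\sigma_{n,\widehat{J}_*}$ introduced in \Cref{sec:propmulti2D} as the uniform lower bounds over $\mathcal{X}_*$.

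First, I would fix $x_*\in\mathcal{X}_*$ with $\|x_k-x_*\|=\dist(x_k,\mathcal{X}_*)\le\epsilon$, and use $\epsilon\le 1$ to absorb the $\epsilon^2$ bounds on $|\mu_k-\mu_*|$ and $|\lambda_k-\lambda_*|$ into $\epsilon$. Writing out the blocks of $J_k-J(\mu_*,\lambda_*,x_*)$ and applying the same Higham matrix-norm inequality used in \eqref{eq:Thmbehavioreqs1}--\eqref{eq:Thmbehavioreqs2}, I get
\begin{equation*}
\|J_k-J(\mu_*,\lambda_*,x_*)\|\le 3(\|C\|+1)\epsilon,\qquad
\|\widehat{J}_k-\widehat{J}(\mu_*,\lambda_*,x_*)\|\le\sqrt{3}(\|C\|+1)\epsilon.
\end{equation*}
By Weyl's theorem (\cite[p.\,198]{demmel1997applied}) applied to the singular values,
\[
\sigma_{\min}(J_k)\ge\sigma_{\min}(J(\mu_*,\lambda_*,x_*))-3(\|C\|+1)\epsilon
\ge\sigma_{\min,J_*}-3(\|C\|+1)\epsilon,
\]
and analogously for $\sigma_n(\widehat{J}_k)$. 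Choosing
\[
\epsilon_1=\frac{\min\{\sigma_{\min,J_*},\sigma_{n,\widehat{J}_*}\}}{6(\|C\|+1)}
\]
then gives (i) and (ii) exactly as in the simple-case proof.

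For (iii) the argument is even cleaner: $\widehat{J}(\mu_*,\lambda_*,x_k)$ differs from $\widehat{J}(\mu_*,\lambda_*,x_*)$ only in its last two block columns, and the difference is bounded by $\sqrt{2}(\|C\|+1)\|x_k-x_*\|\le\sqrt{2}(\|C\|+1)\epsilon$, so the same Weyl estimate yields $\sigma_n(\widehat{J}(\mu_*,\lambda_*,x_k))\ge\sigma_{n,\widehat{J}_*}-\sqrt{2}(\|C\|+1)\epsilon$, and the choice of $\epsilon_1$ above is more than enough to conclude the $\tfrac12$ bound.

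The only conceptual obstacle is justifying that the two infima $\sigma_{\min,J_*}$ and $\sigma_{n,\widehat{J}_*}$ are attained as strictly positive quantities (so that the threshold $\epsilon_1$ is nonzero and depends only on $(A,C,\mu_*,\lambda_*)$); but this is precisely what was established in \Cref{sec:propmulti2D} from the nonsingularity of $J(\mu_*,\lambda_*,x_*)$ for every $x_*\in\mathcal{X}_*$ together with the compactness of $\mathcal{X}_*$ and Weyl continuity of singular values. Everything else is a routine singular-value perturbation calculation, and no new ideas beyond those in \Cref{Thm:behavior} are needed.
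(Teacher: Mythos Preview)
Your proposal is correct and follows essentially the same approach as the paper: fix $x_*\in\mathcal{X}_*$ realising the distance, bound $\|J_k-J(\mu_*,\lambda_*,x_*)\|$ by $3(\|C\|+1)\epsilon$ via the block-norm inequality, apply Weyl's theorem, and choose $\epsilon_1=\min\{\sigma_{\min,J_*},\sigma_{n,\widehat{J}_*}\}/(6(\|C\|+1))$. The paper is slightly terser---it chains the three norm differences in a single inequality $\|\widehat{J}(\mu_*,\lambda_*,x_k)-\widehat{J}_*\|\le\|\widehat{J}_k-\widehat{J}_*\|\le\|J_k-J(\mu_*,\lambda_*,x_*)\|\le 3(\|C\|+1)\epsilon$ rather than treating (iii) separately---but the argument and the resulting $\epsilon_1$ are identical.
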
  
\begin{proof} 
The proof is similar to the proof of \Cref{Thm:behavior}.  
Note that $x_*$ is the vector in $\mathcal{X}_*$ closest to $x_k$. 
Denote $\widehat{J}_*=\widehat{J}(\mu_*,\lambda_*,x_*)$. Then we have
\[
\|\widehat{J}(\mu_*,\lambda_*,x_k)-\widehat{J}_*\|\leq\|\widehat{J}_k-\widehat{J}_*\|\leq\|J_k-J(\mu_*,\lambda_*,x_*)\|\leq 3(\|C\|+1)\epsilon,
\]
Let
\[
\epsilon_1=\frac{\min\{\sigma_{\min,J_*}, \sigma_{n,\widehat{J}_*}\}}{6(\|C\|+1)}.
\]
Then if $\epsilon\leq\epsilon_1$, 
by Weyl's theorem \cite[p.\,198]{demmel1997applied},
we have 
\begin{align*}
|\sigma_{\min}(J_k)-\sigma_{\min}(J(\mu_*,\lambda_*,x_*))|
&\leq\frac{\sigma_{\min,J_*}}{2},\\						
|\sigma_{n}(\widehat{J}_k)-\sigma_{n}(\widehat{J}_*)|
&\leq\frac{\sigma_{n,\widehat{J}_*}}{2},\\
|\sigma_{n}(\widehat{J}(\mu_*,\lambda_*,x_k))-\sigma_{n}(\widehat{J}_*)| 
&\leq\frac{\sigma_{n,\widehat{J}_*}}{2}.	
\end{align*}  
Thus
\[
\sigma_{\min}(J_k)
\geq \sigma_{\min}(J(\mu_*,\lambda_*,x_*))-\frac{\sigma_{\min,J_*}}{2}
\geq\frac{\sigma_{\min,J_*}}{2}>0,
\]
\[
\sigma_{n}(\widehat{J}_k)
\geq \sigma_{n}(\widehat{J}_*)-\frac{\sigma_{n,\widehat{J}_*}}{2}
\geq\frac{\sigma_{n,\widehat{J}_*}}{2}>0,
\]
and 
\[
\sigma_{n}(\widehat{J}(\mu_*,\lambda_*,x_k))
\geq \sigma_{n}(\widehat{J}_*)-\frac{\sigma_{n,\widehat{J}_*}}{2}
\geq\frac{\sigma_{n,\widehat{J}_*}}{2}>0.
\]
This completes the proof. 
\end{proof}

\subsection{Approximation of $V_k$ to $V_*$ and 
nonsingularity of $C_k$}

Next we show the approximation of $V_k$ to $V_*$ and nonsingularity of 
$C_k = V^H_k C V_k$,
where $V_k$ is defined in \eqref{eq:vkdef2} and \eqref{eq:determineVk} 
and $V_*$ is defined in \Cref{sec:propmulti2D}.
In this section, for brevity, we write 
$V_k=[v_{1}, \, v_2]$ and $V_*= [v_{1,*},\, v_{2,*}]$.

\begin{lemma}\label{lemmaRQItypeII}
Let $|\mu_k-\mu_*|\leq\epsilon^2$, $|\lambda_k-\lambda_*|\leq\epsilon^2$, 
and $\dist(x_k,\mathcal{X}_*)\leq\epsilon$ for some $\epsilon\leq1$. 
Then there exist $\epsilon_2>0$ and $\kappa_4>0$ depending on 
$(A,C,\mu_*,\lambda_*)$, such that if $\epsilon\leq\epsilon_2$, 
we have
\begin{equation}\label{eq:thmbehavior2eqs1}
\left\|V_*-V_k\begin{bmatrix}
\gamma_1&\\
&\gamma_2
\end{bmatrix}\right\|\leq \kappa_4\epsilon^2,
\end{equation}
where $\gamma_i=\sign(v_i^Hv_{i,*})$.
\end{lemma}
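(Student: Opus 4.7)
\emph{Proof proposal.} The plan is to exploit a structural observation about an intermediate Jacobian to upgrade the first-order nullspace comparison used in Lemma~\ref{lemmaRQItypeI} to second order, and then transfer the resulting closeness to the projection matrices via the tools of Section~\ref{sec:perturbation}.

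The central observation is that $\widehat{J}(\mu_*, \lambda_*, x_k) = [A-\mu_*C-\lambda_*I,\ -Cx_k,\ -x_k]$ has the same nullspace as $\widehat{J}_*$, namely $\Span\{[\widetilde{V}_*^T,\, 0,\, 0]^T\}$, \emph{independently of $x_k$}: the identity $(A-\mu_*C-\lambda_*I)\widetilde{V}_* = 0$ places the columns of $[\widetilde{V}_*^T,\, 0,\, 0]^T$ into this nullspace regardless of the last two columns, while Lemma~\ref{Thm:behaviorII}(iii) certifies that the nullspace has dimension exactly $2$. Denoting this orthonormal nullspace basis by $\widehat{V}_*^{\rm ext}$, I will then apply Theorem~\ref{Thm:null} to the pair $\widehat{J}_k$ and $\widehat{J}(\mu_*, \lambda_*, x_k)$: their difference is $[(\mu_k-\mu_*)C + (\lambda_k-\lambda_*)I,\ 0,\ 0]$, with norm of order $(\|C\|+1)\epsilon^2$ by hypothesis, and the lower bound on $\sigma_n$ required by the theorem is supplied by Lemma~\ref{Thm:behaviorII}(iii). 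This yields $\|\sin\Theta(\widehat{V}_k, \widehat{V}_*^{\rm ext})\| = O(\epsilon^2)$.

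To descend from the nullspace bases to the target projection matrices, Lemma~\ref{lemma:Zhang} supplies a $2\times 2$ unitary $Z$ with $\|\widehat{V}_*^{\rm ext} - \widehat{V}_k Z\| = O(\epsilon^2)$; keeping only the top $n$ rows gives $\|\widetilde{V}_* - \widetilde{V}_k Z\| = O(\epsilon^2)$, and since $\widetilde{V}_*$ has orthonormal columns, Theorem~\ref{Thm:lu} converts this (for $\epsilon$ small) into $\|\sin\Theta(\widetilde{V}_*, \widetilde{V}_k)\| = O(\epsilon^2)$, using that $\sin\Theta$ depends only on the spanned subspaces. Because $\Span V_k = \Span\widetilde{V}_k$ and $\Span V_* = \Span\widetilde{V}_*$, we get $\|\sin\Theta(V_*, V_k)\| = O(\epsilon^2)$. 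Finally, $V_*$ and $V_k$ both have orthonormal columns and simultaneously diagonalize $C$ with $c_{1,*} > c_{2,*}$ strict (by the indefiniteness in Lemma~\ref{lem:cstarII}(i)) and $c_{1,k} \geq c_{2,k}$, so Theorem~\ref{Thm:doubleV} delivers unit-modulus scalars with $\gamma_i = \sign(v_i^H v_{i,*})$ and the column-scaled bound $\kappa_4\epsilon^2$, once $\epsilon_2$ is chosen as the minimum of all thresholds imposed along the way.

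The main obstacle is the central observation itself: a naive comparison of $\widehat{J}_k$ with $\widehat{J}_*$ gives only a first-order bound, because $\widehat{J}_k - \widehat{J}_* = O(\epsilon)$ carries the full error $\|x_k - x_*\| = O(\epsilon)$. This is exactly what forced the simple-eigenvalue Lemma~\ref{lemmaRQItypeI} to settle for an $O(\epsilon)$ bound; the multiple-eigenvalue case instead exploits that the nullspace of $\widehat{J}(\mu_*, \lambda_*, \cdot)$ is insensitive to its last two block columns, so routing the perturbation argument through $\widehat{J}(\mu_*, \lambda_*, x_k)$ charges only the $O(\epsilon^2)$ perturbation in $(\mu,\lambda)$.
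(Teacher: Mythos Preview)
Your proposal is correct and follows essentially the same approach as the paper: route the nullspace comparison through the intermediate matrix $\widehat{J}(\mu_*,\lambda_*,x_k)$, whose nullspace is still $[\,V_*^T,\,0,\,0\,]^T$ and which differs from $\widehat{J}_k$ only by the $O(\epsilon^2)$ shift in $(\mu,\lambda)$, then chain Theorem~\ref{Thm:null}, Lemma~\ref{lemma:Zhang}, Theorem~\ref{Thm:lu}, and Theorem~\ref{Thm:doubleV}. Your closing paragraph correctly isolates the reason the multiple case upgrades to $O(\epsilon^2)$ where the simple case (Lemma~\ref{lemmaRQItypeI}) only gets $O(\epsilon)$.
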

\begin{proof} 
Consider the nullspace of $\widehat{J}_k$ and 
$\widehat{J}(\mu_*,\lambda_*,x_k)$. 
If $\epsilon\leq\epsilon_1$, where $\epsilon_1$ is defined 
in \Cref{Thm:behaviorII}, we have
\begin{equation}\label{eq:sigmaWidehatJ}
\begin{aligned}
\sigma_{n}(\widehat{J}_k)&\geq\frac{1}{2}\sigma_{n,\widehat{J}_*}>0, \\
\sigma_{n}(\widehat{J}(\mu_*,\lambda_*,x_k))&\geq\frac{1}{2}\sigma_{n,\widehat{J}_*}>0.
\end{aligned}
\end{equation}
Thus their nullspace has dimension 2. 
The nullspace of $\widehat{J}(\mu_*,\lambda_*,x_k)$ is spanned 
by $\widehat{V}_*=\left[\begin{smallmatrix}
V_*\\0\\0
\end{smallmatrix}\right]$. 

Now denote $\widehat{V}_k$ as an orthonormal basis of the nullspace of 
$\widehat{J}_k$. According to \Cref{Thm:null}, 
if $\|\widehat{J}_k-\widehat{J}(\mu_*,\lambda_*,x_k)\|
\leq\frac{1}{2}\sigma_{n}(\widehat{J}(\mu_*,\lambda_*,x_k))$, we have
\begin{equation}\label{eq:lemmaRQItypeIIeqs1}
\|\sin\Theta(\widehat{V}_k,\widehat{V}_*)\|\leq \frac{8\|\widehat{J}(\mu_*,\lambda_*,x_k)\|}{\sigma_{n}^2(\widehat{J}(\mu_*,\lambda_*,x_k))}\|\widehat{J}_k-\widehat{J}(\mu_*,\lambda_*,x_k)\|.
\end{equation}
Straight calculation shows 
\begin{align*}
&\|\widehat{J}_k-\widehat{J}(\mu_*,\lambda_*,x_k)\|\leq (\|C\|+1)\epsilon^2\\
&\|\widehat{J}(\mu_*,\lambda_*,x_k)\|\leq \|A-\mu_*C-\lambda_*I\|+\|C\|+1.
\end{align*}
Therefore, if $\epsilon\leq\widetilde{\epsilon_1}\equiv\sqrt{\frac{\sigma_{n,\widehat{J}_*}}{4(\|C\|+1)}}$, 
\[\|\widehat{J}_k-\widehat{J}(\mu_*,\lambda_*,x_k)\|\leq(\|C\|+1)\epsilon^2\leq\frac{1}{2}\sigma_{n}(\widehat{J}(\mu_*,\lambda_*,x_k)),\]
where we use \eqref{eq:sigmaWidehatJ}. 
Thus the inequality \eqref{eq:lemmaRQItypeIIeqs1} holds. 
Furthermore, we have
\begin{equation}
\|\sin\Theta(\widehat{V}_k,\widehat{V}_*)\|
\leq \frac{32(\|A-\mu_*C-\lambda_*I\|+\|C\|+1)}
{\sigma_{n,\widehat{J}_*}^2}(\|C\|+1)\epsilon^2
\equiv T\epsilon^2.
\end{equation}
According to \Cref{lemma:Zhang}, there exists unitary matrix $\widehat{Z}$, 
such that
\[
\|\widehat{V}_*-\widehat{V}_k\widehat{Z}\|\leq\sqrt{2}\|\sin\Theta(\widehat{V}_k, 
\widehat{V}_*)\|\leq\sqrt{2}T\epsilon^2.
\]
Comparing the first $n$ rows of $\widehat{V}_*-\widehat{V}_k\widehat{Z}$, 
we obtain
\[
\|V_*-\widehat{V}_k(1:n,:)\widehat{Z}\|\leq\sqrt{2}T\epsilon^2.
\]
According to \Cref{Thm:lu}, if $\sqrt{2}T\epsilon^2\leq\frac{1}{2}$, 
i.e., $\epsilon\leq\widetilde{\epsilon}_2\equiv 
\frac{1}{8^{\frac{1}{4}}\sqrt{T}}$, we have 
\[
\|\sin\Theta(V_*, V_k)\| 
= \|\sin\Theta(V_*, \widehat{V}_k(1:n,:)\widehat{Z})\|\leq2\sqrt{2}T\epsilon^2.
\]
	
Note that $V_*^HCV_*=\Diag(c_{1,*},c_{2,*})$, $V_k^HCV_k=\Diag(c_{1,k},c_{2,k})$. 
Thus according to \Cref{Thm:doubleV}, there exist positive constants $t_0,\kappa_0$ depending on $C,c_{1,*},c_{2,*}$, such that if $2\sqrt{2}T\epsilon^2\leq t_0$, i.e., $\epsilon\leq\widetilde{\epsilon}_3\equiv\sqrt{\frac{t_0}{2\sqrt{2}T}}$, for $\gamma_i=\sign(v_i^Hv_{i,*})$, we have 
\begin{equation*}
\|V_*-V_k\Diag(\gamma_1,\gamma_2)\|\leq \kappa_0\|\sin\Theta(V_*,V_k)\|\leq 2\sqrt{2}\kappa_0T\epsilon^2.
\end{equation*}
Let $\kappa_4=2\sqrt{2}\kappa_0T$ and 
$\epsilon_2=\min\{1,\epsilon_1,\widetilde{\epsilon}_1,\widetilde{\epsilon}_2,\widetilde{\epsilon}_3\}$. Then we reach the conclusion. 	
\end{proof}

The next lemma shows the the matrix $C_k$ is nonsingular. 

\begin{lemma}\label{lem:CkindefII}
Let $|\mu_k-\mu_*|\leq\epsilon^2$, $|\lambda_k-\lambda_*|\leq\epsilon^2$, 
and $\dist(x_k,\mathcal{X}_*)\leq\epsilon$ for some $\epsilon\leq1$. 
There exists $\epsilon_3>0$ depending on 
$(A,C,\mu_*,\lambda_*)$, such that if $\epsilon\leq\epsilon_3$, 
then $C_k$ is indefinite and 
\begin{equation}\label{eq:CkindefIIineqs}
c_{1,k}\geq\frac{c_{1,*}}{2}>0
\quad \mbox{and}\quad 
c_{2,k}\leq\frac{c_{2,*}}{2}<0.
\end{equation}
\end{lemma}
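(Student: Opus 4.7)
The plan is to leverage \Cref{lemmaRQItypeII} together with Weyl's theorem to bound the deviation of the diagonal entries $c_{i,k}$ from $c_{i,*}$, and then fix $\epsilon_3$ so that the deviation is smaller than half of $\min\{c_{1,*},-c_{2,*}\}$. Recall from \Cref{lem:cstarII} that $C_* = V_*^HCV_* = \Diag(c_{1,*},c_{2,*})$ is indefinite with $c_{1,*}>0>c_{2,*}$, so showing $|c_{i,k}-c_{i,*}| = O(\epsilon^2)$ immediately yields both the sign conditions and the quantitative bounds \eqref{eq:CkindefIIineqs}.

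First I would invoke \Cref{lemmaRQItypeII} to obtain, for $\epsilon \le \epsilon_2$, the column-scaled approximation
\[
\|E\| \le \kappa_4 \epsilon^2, \quad E := V_* - V_k D, \quad D := \Diag(\gamma_1,\gamma_2),
\]
where $D$ is a unitary diagonal matrix. Since $D$ is unitary and diagonal, conjugation by $D$ preserves the diagonal structure: $D^H V_k^H C V_k D = D^H \Diag(c_{1,k},c_{2,k}) D = \Diag(c_{1,k},c_{2,k})$. Expanding $V_* = V_kD + E$, I would compute
\[
V_*^H C V_* - D^H V_k^H C V_k D = D^H V_k^H C E + E^H C V_k D + E^H C E,
\]
whose 2-norm is at most $(2\|C\| + \|C\|\|E\|)\|E\| \le 3\|C\|\kappa_4 \epsilon^2$ once $\epsilon \le \min\{\epsilon_2, 1/\kappa_4\}$.

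Next I would read off the diagonal entries of both sides: the left-hand side has diagonal $(c_{1,*}-c_{1,k},\, c_{2,*}-c_{2,k})$, while the $(i,i)$-entry of a matrix is bounded in magnitude by its 2-norm. Hence
\[
|c_{i,*}-c_{i,k}| \le 3\|C\|\kappa_4 \epsilon^2, \quad i=1,2.
\]
(Weyl's theorem would give the same bound directly from the fact that $c_{1,k},c_{2,k}$ are the eigenvalues of $D^HV_k^HCV_kD$ and $c_{1,*},c_{2,*}$ are the eigenvalues of $V_*^HCV_*$, with matching ordering.)

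Finally, setting
\[
\epsilon_3 := \min\!\left\{\epsilon_2,\ \tfrac{1}{\kappa_4},\ \sqrt{\tfrac{\min\{c_{1,*},-c_{2,*}\}}{6\|C\|\kappa_4}}\right\},
\]
we obtain $3\|C\|\kappa_4\epsilon^2 \le \min\{c_{1,*},-c_{2,*}\}/2$, which gives $c_{1,k} \ge c_{1,*}/2 > 0$ and $c_{2,k} \le c_{2,*}/2 < 0$, so $C_k$ is indefinite. There is no serious obstacle here; the only mild subtlety is ensuring the column-alignment provided by $D$ so that the diagonal entries of the two projected matrices correspond in the correct order, which is exactly what the sign choice $\gamma_i = \sign(v_i^Hv_{i,*})$ in \Cref{lemmaRQItypeII} is designed to give.
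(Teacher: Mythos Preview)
Your proposal is correct and follows essentially the same approach as the paper: invoke \Cref{lemmaRQItypeII} to obtain the column-aligned estimate $\|V_*-V_kD\|\le\kappa_4\epsilon^2$, then compare the diagonal entries $c_{i,*}=v_{i,*}^HCv_{i,*}$ and $c_{i,k}=(\gamma_iv_i)^HC(\gamma_iv_i)$ to conclude. The paper's proof merely says ``with simple analysis'' at this last step, whereas you have carried out that analysis explicitly with the matrix-difference bound and a concrete choice of $\epsilon_3$.
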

\begin{proof}
Let $\epsilon\leq\epsilon_2$, where $\epsilon_2$ is defined in \Cref{lemmaRQItypeII}.
Then \eqref{eq:thmbehavior2eqs1} holds. Note that 
\[
c_{i,*}=v_{i,*}^HCv_{i,*},\quad c_{i,k}=(v_{i}\gamma_i)^HC(v_{i}\gamma_i).
\] 
Therefore with simple analysis we can find $\epsilon_3$ depending on 
$A,C,\mu_*,\lambda_*$, such that if $\epsilon\leq\epsilon_3$, 
the inequality \eqref{eq:CkindefIIineqs} holds.	
\end{proof}

\subsection{Main result}
We now present the main result on
the quadratic convergence rate of the 2DRQI 
to compute a nonsingular multiple 2D eigenvalue.

\begin{theorem}\label{Thm:RQItype2}
Assume $|\mu_k-\mu_*|\leq\epsilon^2$, $|\lambda_k-\lambda_*|\leq\epsilon^2$, 
and $\dist(x_k,\mathcal{X}_*)\leq\epsilon$ for some $\epsilon\leq1$.
There exist positive constants $\epsilon_0,\kappa_1,\kappa_2,\kappa_3$ 
depending on $(A,C,\mu_*,\lambda_*)$, such that
\[
|\mu_{k+1}-\mu_*|\leq \kappa_1\epsilon^4, \quad
|\lambda_{k+1}-\lambda_*|=\kappa_2\epsilon^4 \quad \mbox{and} \quad
\dist(x_{k+1}, \mathcal{X}_*)\leq \kappa_3\epsilon^2.
\] 
\end{theorem}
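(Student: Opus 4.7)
The plan is to parallel the structure of the simple-case argument used for \Cref{Thm:convtype1}, but exploit a cancellation unique to the multiple case. Let $M := A - \mu_* C - \lambda_* I$. Since $\Span\{V_*\}$ is the eigenspace of $A - \mu_* C$ for $\lambda_*$, one has $M V_* = 0$ and, by Hermiticity, $V_*^H M = 0$. This yields a second-order (rather than first-order) reduction in the projected quantity $V_k^H M V_k$, and is the mechanism behind the claimed $O(\epsilon^4)$ bounds for $\mu_{k+1}$ and $\lambda_{k+1}$.

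First I would apply \Cref{Thm:behaviorII,lemmaRQItypeII,lem:CkindefII} to ensure, for $\epsilon$ sufficiently small, that $J_k$ is nonsingular, $C_k$ is indefinite with its eigenvalues $c_{i,k}$ bounded away from $0$, and $\|V_* - V_k \Diag(\gamma_1,\gamma_2)\| \leq \kappa_4\,\epsilon^2$ for appropriate phases $\gamma_1,\gamma_2$ with $|\gamma_i|=1$. Writing $E := V_* - V_k \Diag(\gamma_1,\gamma_2)$, the key identity then reads
\[
V_k^H M V_k = \Diag(\gamma_i)\,(V_*^H M V_* - V_*^H M E - E^H M V_* + E^H M E)\,\Diag(\bar\gamma_i) = \Diag(\gamma_i)\,E^H M E\,\Diag(\bar\gamma_i),
\]
so $\|V_k^H M V_k\| \leq \|M\|\|E\|^2 = O(\epsilon^4)$.

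Next I would analyze the update. In both branches of the 2DRQI (whether $a_{12,k}=0$ or not; see \eqref{eq:update1a}--\eqref{eq:update1b}), the $(k{+}1)$st iterate has the unified form $x_{k+1} = V_k z(\alpha)$, $\mu_{k+1} = z(\alpha)^H C_k A_k z(\alpha)/\|C_k z(\alpha)\|^2$, $\lambda_{k+1} = z(\alpha)^H A_k z(\alpha)$, where $z(\alpha) = [t_k,\alpha s_k]^T$ is a unit vector satisfying $z(\alpha)^H C_k z(\alpha) = 0$. Using $z^H C_k z = 0$ to subtract a vanishing $\lambda_*$ contribution gives
\[
\mu_{k+1} - \mu_* = \frac{z^H C_k (A_k - \mu_* C_k - \lambda_* I) z}{\|C_k z\|^2} = \frac{z^H C_k (V_k^H M V_k) z}{-c_{1,k}c_{2,k}},
\]
and combining $\|V_k^H M V_k\| = O(\epsilon^4)$ with $\|C_k z\| = \sqrt{-c_{1,k}c_{2,k}}$ bounded below yields $|\mu_{k+1}-\mu_*| = O(\epsilon^4)$. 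For the eigenvector, expand $x_{k+1} = t_k v_1 + \alpha s_k v_2$ (with $V_k=[v_1,v_2]$) and use the smoothness of $(c_1,c_2)\mapsto(t,s)$ (so $|t_k - t_*|, |s_k - s_*| = O(\epsilon^2)$ by \Cref{lem:CkindefII}) together with $\|v_i - \bar\gamma_i v_{i,*}\| = O(\epsilon^2)$ to obtain $\|x_{k+1} - (\bar\gamma_1 t_* v_{1,*} + \alpha\bar\gamma_2 s_* v_{2,*})\| = O(\epsilon^2)$; the target vector lies in $\mathcal{X}_*$ by \eqref{eq:XstarStructureII}, so $\dist(x_{k+1},\mathcal{X}_*) = O(\epsilon^2)$. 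Finally, since $x_{k+1}^H C x_{k+1} = 0$ and $\|x_{k+1}\|=1$, \Cref{Thm:lsq} gives $|\lambda_{k+1} - \lambda_*| \leq \|M\|\dist(x_{k+1},\mathcal{X}_*)^2 = O(\epsilon^4)$.

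The hardest part is setting up the invariant-subspace cancellation cleanly and carrying the $O(\epsilon^2)$ basis bound from \Cref{lemmaRQItypeII} through the subsequent algebra; the rest is bookkeeping of constants. Unlike the simple case in \Cref{Thm:convtype1}, no final index-selection step is required, since here both potential 2D Ritz triplets of $(A_k,C_k)$ converge to $(\mu_*,\lambda_*)$ (they merely approximate possibly different elements of $\mathcal{X}_*$), and the claimed bounds therefore hold uniformly in the choice of $\alpha$.
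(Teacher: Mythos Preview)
Your argument is correct and considerably more direct than the paper's. The eigenvector bound and the $\lambda_{k+1}$ bound follow the same lines as the paper's \Cref{lemma2RQItype2} and the use of \Cref{Thm:lsq} in \Cref{Thm:verylong}. The real difference is in the $\mu_{k+1}$ estimate: the paper expands $\mu_{k+1}=\Real(x_{k+1}^H C V_k V_k^H A x_{k+1})/\|V_k^H C x_{k+1}\|^2$ around $(\widetilde x_*,V_*)$, organizes the first-order terms into $T_{11},T_{12},T_{21},T_{22}$, and then spends most of \Cref{Thm:verylong} showing $\Real(T_{12})=O(\epsilon^4)$ via a coordinate decomposition $v_i=E_{i1}v_{1,*}+E_{i2}v_{2,*}+r_i$ and repeated applications of \Cref{le:basiclemma}. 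You bypass all of this with the single observation that $M V_*=V_*^H M=0$ forces $V_k^H M V_k=\Diag(\gamma_i)\,E^H M E\,\Diag(\bar\gamma_i)$, so $\|A_k-\mu_*C_k-\lambda_*I\|=O(\epsilon^4)$ immediately, and then $\mu_{k+1}-\mu_*=z^H C_k(V_k^H M V_k)z/\|C_k z\|^2$ is bounded by $\|V_k^H M V_k\|/\sqrt{-c_{1,k}c_{2,k}}$. Your remark that no index-selection step is needed is also correct: the bound is uniform in $\alpha$, so whichever 2D Ritz triplet the algorithm selects in \eqref{eq:update1a} (or the single one in \eqref{eq:update1b}) satisfies the estimates. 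The paper's route has the minor advantage of tracking the first-order structure explicitly, but your approach is cleaner and isolates the essential mechanism (both $MV_*$ and $V_*^H M$ vanish in the multiple case) more transparently.
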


For brevity, we assume in the below that we have multiplied 
the diagonal scaling matrix $\begin{bmatrix}
\gamma_1&\\
&\gamma_2
\end{bmatrix}$ on the right of $V_k$ defined in \Cref{lemmaRQItypeII}
such that
\[ 
\left\|V_*-V_k\right\|\leq \kappa_4\epsilon^2, 
\]
and 
\begin{equation}\label{eq:vstarvisreal} 
v_{i,*}^Hv_i = |v_{i,*}^Hv_i|, \quad i=1,2, 
\end{equation}
where $\kappa_4$ is defined in \Cref{lemmaRQItypeII}. 

We first prove the convergence of the 2D eigenvector.

\begin{lemma}\label{lemma2RQItype2}
Under the assumption of 
\Cref{Thm:RQItype2}, there exists positive constants 
$\kappa_3,\epsilon_4>0$ depending on $(A,C,\mu_*,\lambda_*)$, 
such that if $\epsilon\leq\epsilon_4$, then 
\begin{enumerate}[(i)]  
\item $x_{k+1}$ will be determined as in $\eqref{eq:update1a}$ 
           or $\eqref{eq:update1b}$.
\item $\dist(x_{k+1}, \mathcal{X}_*)\leq \kappa_3\epsilon^2$.
\end{enumerate} 
\end{lemma}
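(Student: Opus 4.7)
The plan is to observe that both update rules \eqref{eq:update1a} and \eqref{eq:update1b} produce an iterate of the form $x_{k+1}=V_k z(\alpha_{k+1})$ for some unimodular scalar $\alpha_{k+1}$; then the two cases can be analyzed uniformly by comparing $V_k z(\alpha_{k+1})$ against the element $t_* v_{1,*} + \alpha_{k+1} s_* v_{2,*}$ of $\mathcal{X}_*$ from \eqref{eq:XstarStructureII}. This bypasses the case distinction $a_{12,k}\neq 0$ vs.\ $a_{12,k}=0$ entirely, since the candidate target in $\mathcal{X}_*$ can be chosen to depend on whatever $\alpha_{k+1}$ the algorithm emits.

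For part (i), I would invoke \Cref{lem:CkindefII} to guarantee that $C_k$ is indefinite and that $c_{1,k}$, $-c_{2,k}$, and $c_{1,k}-c_{2,k}$ are all bounded below by positive constants. This ensures $z(\alpha)$ in \eqref{eq:mulamalpha} is well-defined for every $|\alpha|=1$, and together with the discussion in \Cref{sec:rqi} it shows that the $2\times 2$ 2DEVP on $(A_k,C_k)$ can always be solved, so one of \eqref{eq:update1a}, \eqref{eq:update1b} is invoked according to whether $a_{12,k}=0$, producing $x_{k+1}\in\Span\{V_k\}$ with $x_{k+1}^H C x_{k+1}=0$ and $\|x_{k+1}\|=1$.

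For part (ii), write $x_{k+1}=t_k v_1+\alpha_{k+1} s_k v_2$ with $t_k=\sqrt{-c_{2,k}/(c_{1,k}-c_{2,k})}$, $s_k=\sqrt{c_{1,k}/(c_{1,k}-c_{2,k})}$, and set $y_*:=t_* v_{1,*}+\alpha_{k+1} s_* v_{2,*}\in\mathcal{X}_*$. A triangle inequality gives
\[
\|x_{k+1}-y_*\|\leq |t_k-t_*|+|s_k-s_*|+t_*\|v_1-v_{1,*}\|+s_*\|v_2-v_{2,*}\|.
\]
The last two terms are $O(\epsilon^2)$ by the bound $\|V_k-V_*\|\leq \kappa_4\epsilon^2$ from \Cref{lemmaRQItypeII}. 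For the first two, I would use the Weyl-type estimate $|c_{i,k}-c_{i,*}|\leq 2\|C\|\,\|V_k-V_*\|=O(\epsilon^2)$ combined with the Lipschitz continuity of the square-root functions defining $t$ and $s$ near $(c_{1,*},c_{2,*})$; this Lipschitz property is legitimate precisely because \Cref{lem:CkindefII} keeps all denominators and radicands uniformly positive.

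The main obstacle is really just ensuring that the argument of every square root stays in a region where it is Lipschitz continuous, which is handled by \Cref{lem:CkindefII}, and that the choice of target in $\mathcal{X}_*$ can be adapted to the algorithm's output $\alpha_{k+1}$ rather than the other way around. After these two observations the estimate reduces to a triangle-inequality computation. The resulting $\kappa_3$ is assembled from $\kappa_4$, $\|C\|$, and the spectral data $c_{1,*},c_{2,*}$ of $C_*$, and $\epsilon_4$ is taken as the minimum of the thresholds in \Cref{Thm:behaviorII,lemmaRQItypeII,lem:CkindefII}.
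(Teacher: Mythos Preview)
Your proposal is correct and follows essentially the same route as the paper: invoke \Cref{lem:CkindefII} for part (i), then for part (ii) pick a target in $\mathcal{X}_*$ matching the unimodular phase(s) in $x_{k+1}$, apply the triangle inequality, and control $|t_k-t_*|,|s_k-s_*|$ via the $O(\epsilon^2)$ perturbation of $c_{1,k},c_{2,k}$. The only cosmetic differences are that the paper parametrizes $x_{k+1}$ with two free unimodular scalars $\gamma_1^{(k+1)},\gamma_2^{(k+1)}$ (your single $\alpha_{k+1}$ suffices once you absorb the diagonal rescaling from \Cref{lemmaRQItypeII} into the choice of target in $\mathcal{X}_*$) and invokes \Cref{le:basiclemma}(i) in place of your direct Lipschitz argument for the square roots.
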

\begin{proof}
We first assume $\epsilon\leq\min\{\epsilon_1,\epsilon_2,\epsilon_3\}$, 
where $\epsilon_1, \epsilon_2$ and $\epsilon_3$ are defined in 
\Cref{Thm:behaviorII,lemmaRQItypeII,lem:CkindefII}. Then 
$C_k$ is indefinite. Therefore, $x_{k+1}$ is
computed by $\eqref{eq:update1a}$ or $\eqref{eq:update1b}$, 
and satisfies
\begin{equation} \label{eq:xk1exp}
x_{k+1}\in\Span\{V_k\}, \quad 
x_{k+1}^HCx_{k+1}=0,\quad \mbox{and} \quad
x_{k+1}^Hx_{k+1}=1.
\end{equation} 
This reaches the result (i). 

For the result (ii), first note that the equation \eqref{eq:xk1exp}
implies there exist $\gamma_i^{(k+1)}$ with 
$|\gamma_i^{(k+1)}|=1$ for $i = 1, 2$ such that
\begin{equation}\label{eq:definexplusII}
x_{k+1}=\gamma_{1}^{(k+1)}tv_{1}+\gamma_{2}^{(k+1)}sv_{2},
\end{equation}
where 
\[
t = \sqrt{\frac{-v_{2}^HCv_{2}}{v_{1}^HCv_{1}-v_{2}^HCv_{2}}} 
\quad \mbox{and} \quad
s = \sqrt{\frac{v_{1}^HCv_{1}}{v_{1}^HCv_{1}-v_{2}^HCv_{2}}}.
\] 
On the other hand, {by the definition of $\mathcal{X}_*$ in 
\eqref{eq:XstarStructureII}}, the vector 
\[ 
\gamma_1^{(k+1)}t_*v_{1,*}+\gamma_2^{(k+1)}s_*v_{2,*}\in\mathcal{X}_*,
\]
where
\begin{align*}    
t_{*}  = \sqrt{\frac{-c_{2,*}}{c_{1,*}-c_{2,*}}} 
        = \sqrt{\frac{-v_{2,*}^HCv_{2,*}}{v_{1,*}^HCv_{1,*}-v_{2,*}^HCv_{2,*}}},
\quad 
s_*  =\sqrt{\frac{c_{1,*}}{c_{1,*}-c_{2,*}}} 
      = \sqrt{\frac{v_{1,*}^HCv_{1,*}}{v_{1,*}^HCv_{1,*}-v_{2,*}^HCv_{2,*}}}.
\end{align*}
Consequently, 
\begin{align*}
\|x_{k+1}-(t_*v_{1,*}\gamma_1^{(k+1)}+s_*v_{2,*}\gamma_2^{(k+1)})\| 
& =\|(\gamma_1^{(k+1)}tv_{1}+\gamma_2^{(k+1)}sv_{2})-
     (\gamma_1^{(k+1)}t_*v_{1,*}+\gamma_2^{(k+1)}s_*v_{2,*})\| \\
& \leq\|(t-t_*)v_1\|+|t_*|\|v_1-v_{1,*}\|+\|(s-s_*)v_2\|+|s_*|\|v_2-v_{2,*}\|\\
& \leq |t-t_*|+|s-s_*|+2\kappa_4\epsilon^2.
\end{align*}
We next estimate $|t-t_*|$ and $|s-s_*|$. Let $\Delta v_i = v_{i,*}-v_i$, 
we have
\begin{align*}
t = \sqrt{\frac{-v_{2}^HCv_{2}}{v_{1}^HCv_{1}-v_{2}^HCv_{2}}}
=\sqrt{\frac{-c_{2,*}+\Delta v_{2}^HCv_{2}+v_{2,*}^HC\Delta v_{2}}{c_{1,*}-c_{2,*}-\Delta v_{1}^HCv_{1}-v_{1,*}^HC\Delta v_{1}+\Delta v_{2}^HCv_{2}+v_{2,*}^HC\Delta v_{2}}}
\end{align*}
and
\begin{align*} 
s = \sqrt{\frac{v_{1}^HCv_{1}}{v_{1}^HCv_{1}-v_{2}^HCv_{2}}}
=\sqrt{\frac{c_{1,*}-\Delta v_{1}^HCv_{1}-v_{1,*}^HC\Delta v_{1}}{c_{1,*}-c_{2,*}-\Delta v_{1}^HCv_{1}-v_{1,*}^HC\Delta v_{1}+\Delta v_{2}^HCv_{2}+v_{2,*}^HC\Delta v_{2}}}.	
\end{align*}
Utilizing \Cref{le:basiclemma}(i), there exist positive constants 
$\widetilde{\kappa}_t,\widetilde{\kappa}_s,\widetilde{\epsilon}_0>0$ 
depending on $(A,C,\mu_*,\lambda_*,c_{1,*},c_{2,*})$, 
such that when $\epsilon\leq\widetilde{\epsilon}_0$, we have
\[
|t-t_*| \leq \widetilde{\kappa}_t\epsilon^2 
\quad \mbox{and} \quad
|s-s_*| \leq \widetilde{\kappa}_s\epsilon^2.
\]
Therefore
\[
\|x_{k+1}-(t_*v_{1,*}\gamma_1^{(k+1)}+s_*v_{2,*}\gamma_2^{(k+1)})\|\leq (\widetilde{\kappa}_t+\widetilde{\kappa}_s+2\kappa_4)\epsilon^2.
\]
Since $c_{1,*},c_{2,*}$ are uniquely determined by 
$(A,C,\mu_*,\lambda_*)$, 
$\widetilde{\kappa}_t,\widetilde{\kappa}_s,
\widetilde{\epsilon}_0$ only depend on $(A,C,\mu_*,\lambda_*)$. 
Let $\kappa_3=\widetilde{\kappa}_t+\widetilde{\kappa}_s+2\kappa_4$, 
$\epsilon_4=\min\{\epsilon_1,\epsilon_2,\epsilon_3,\widetilde{\epsilon}_0\}$,
then we have the result (ii).  
\end{proof}

In the following, for brevity,  we assume 
$\gamma_1^{(k+1)}=\gamma_2^{(k+1)}=1$ in \eqref{eq:definexplusII}, and thus
\begin{equation}\label{eq:defxkplus1New}
x_{k+1}=tv_{1}+sv_{2}.
\end{equation}
Recall that from the proof of \Cref{lemma2RQItype2},
\begin{equation}\label{eq:deftildexstarNew}
\widetilde{x}_*\equiv t_*v_{1,*}+s_*v_{2,*}
\end{equation}
satisfies
\[
\|x_{k+1}-\widetilde{x}_*\| \leq \kappa_3\epsilon^2. 
\] 
The following lemma gives out the approximation property of 2D Ritz values. 

\begin{lemma}\label{Thm:verylong}
Under the assumption of 
\Cref{Thm:RQItype2}, there exist positive constants 
$\kappa_1,\kappa_2,\epsilon_0$ depending on $(A,C,\mu_*,\lambda_*)$, 
where $\epsilon_0$ is no larger than $\epsilon_3$ 
defined in \Cref{lemma2RQItype2}, such that if 
$\epsilon\leq\epsilon_0$, then
\begin{enumerate}[(i)] 
\item $(\mu_{k+1},\lambda_{k+1},x_{k+1})$ 
will be determined as in \eqref{eq:update1a} or \eqref{eq:update1b}, and 

\item $|\mu_{k+1}-\mu_*|\leq \kappa_1\epsilon^4$ and
     $|\lambda_{k+1}-\lambda_*|\leq \kappa_2\epsilon^4$.
\end{enumerate} 
\end{lemma}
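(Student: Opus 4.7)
The plan is to prove the two parts of the lemma separately, with the analysis of $\mu_{k+1}$ resting on one key algebraic observation. For part~(i), since \Cref{lem:CkindefII} guarantees that $C_k$ is indefinite for $\epsilon$ sufficiently small, the $2\times 2$ 2DEVP~\eqref{projectEVP} associated with the 2DRQ $(A_k,C_k)$ admits well-defined 2D-eigentriplets, so the next iterate is produced by one of \eqref{eq:update1a} or \eqref{eq:update1b} depending on whether $a_{12,k}$ vanishes. In either case the chosen 2D-Ritz triplet satisfies $\lambda_{k+1}=z^H A_k z=x_{k+1}^H A x_{k+1}$ and $\mu_{k+1}=z^H C_k A_k z/\|C_k z\|^2$ for the corresponding $z$ with $\|z\|=1$ and $z^H C_k z=0$.

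The central observation for part~(ii) is as follows. Set $B\equiv A-\mu_* C-\lambda_* I$; because $\lambda_*$ is a multiple eigenvalue of $A-\mu_* C$ whose eigenspace is spanned by $V_*$, we have $BV_*=0$. After the diagonal rescaling performed just before the statement, \Cref{lemmaRQItypeII} yields $\|V_k-V_*\|\le \kappa_4\epsilon^2$. Writing $V_k=V_*+E$, the three cross-terms in the expansion of $V_k^H B V_k$ vanish thanks to $BV_*=0$, leaving
\[
M_k \equiv A_k-\mu_* C_k-\lambda_* I = V_k^H B V_k = E^H B E,
\]
so $\|M_k\|\le \|B\|\kappa_4^2\epsilon^4$. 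This quartic bound, in place of the naive $O(\epsilon^2)$ estimate one would inherit from $\|V_k-V_*\|=O(\epsilon^2)$, is what delivers the $\epsilon^4$ rate for $\mu_{k+1}$.

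With $M_k$ controlled, the remainder is routine. The identity $A_k=\mu_* C_k+\lambda_* I+M_k$ together with $z^H C_k z=0$ gives $\mu_{k+1}-\mu_*=z^H C_k M_k z/\|C_k z\|^2$; Cauchy--Schwarz combined with the lower bound $\|C_k z\|=\sqrt{-c_{1,k}c_{2,k}}\ge \frac{1}{2}\sqrt{-c_{1,*}c_{2,*}}$ from \Cref{lem:CkindefII} yields $|\mu_{k+1}-\mu_*|\le \kappa_1\epsilon^4$ with $\kappa_1=2\|B\|\kappa_4^2/\sqrt{-c_{1,*}c_{2,*}}$. For $\lambda_{k+1}$, since $x_{k+1}$ satisfies $x_{k+1}^H C x_{k+1}=0$, $\|x_{k+1}\|=1$, and $\|x_{k+1}-\widetilde{x}_*\|\le \kappa_3\epsilon^2$ by \Cref{lemma2RQItype2}, \Cref{Thm:lsq} applies directly and gives $|\lambda_{k+1}-\lambda_*|\le \|B\|\kappa_3^2\epsilon^4\equiv\kappa_2\epsilon^4$. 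Setting $\epsilon_0$ to be the minimum of the thresholds from \Cref{lem:CkindefII,lemma2RQItype2} then completes the plan. The only genuine obstacle is recognising the identity $V_k^H B V_k=E^H B E$; without this one ends up with only $O(\epsilon^2)$ convergence for $\mu_{k+1}$, which is not quadratic under the hypotheses of \Cref{Thm:RQItype2}.
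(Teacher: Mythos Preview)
Your proof is correct and considerably more direct than the paper's. Both arguments treat part~(i) and the bound on $\lambda_{k+1}$ in the same way (indefiniteness of $C_k$ from \Cref{lem:CkindefII}, then \Cref{Thm:lsq} combined with \Cref{lemma2RQItype2}). The divergence is in the treatment of $\mu_{k+1}$.

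The paper never isolates the matrix $M_k=A_k-\mu_*C_k-\lambda_*I$. Instead it writes $\mu_{k+1}$ via \eqref{eq:updatemulamFromx}, expands numerator and denominator in the perturbations $\Delta V_k=V_*-V_k$ and $\Delta x_{k+1}=\widetilde x_*-x_{k+1}$, and collects the $O(\epsilon^2)$ contributions into four scalar terms $T_{11},T_{12},T_{21},T_{22}$. It then shows $\Real(T_{21}+T_{22})=0$ by Hermitian symmetry, $\Real(T_{11})=O(\epsilon^4)$ from $x_{k+1}^HCx_{k+1}=0$, and finally disposes of $\Real(T_{12})$ through a long coordinate computation: it decomposes $v_i=E_{i1}v_{1,*}+E_{i2}v_{2,*}+r_i$, derives bounds \eqref{eq:Eeqs1}--\eqref{eq:Eeqs3} on the $E_{ij}$ and $r_i$, Taylor-expands $t,s$ about $t_*,s_*$ via \Cref{le:basiclemma}, and checks that the remaining $O(\epsilon^2)$ terms cancel using $v_1^HCv_2=0$.

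Your route replaces all of this with the single structural identity $V_k^HBV_k=E^HBE$ for $B=A-\mu_*C-\lambda_*I$, which holds precisely because $V_*$ spans the full eigenspace and hence $BV_*=0$. This immediately gives $\|M_k\|=O(\epsilon^4)$ and, after one line, $|\mu_{k+1}-\mu_*|\le\|M_k\|/\|C_kz\|=O(\epsilon^4)$. A further bonus is that your bound holds uniformly for either 2D-Ritz triplet, so the selection rule in \eqref{eq:update1a} never enters the argument. The paper's approach, by contrast, tracks $\Delta x_{k+1}$ and $\Delta V_k$ separately and only recovers the cancellation after substantial algebra; what it buys is perhaps a clearer audit trail of exactly which identities ($x_{k+1}^HCx_{k+1}=0$, $v_1^HCv_2=0$) force each $O(\epsilon^2)$ term to vanish, but at several pages' cost.
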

\begin{proof}
Let $\epsilon\leq\epsilon_4$, where $\epsilon_4$ is defined 
in \Cref{lemma2RQItype2}. Then $C_k$ is indefinite and 
$(\mu_{k+1},\lambda_{k+1},x_{k+1})$ will be computed 
by \eqref{eq:update1a} or \eqref{eq:update1b}. This is
the result (i). 

Now we consider the result (ii), 
{by the expressions $(\mu_{k+1},\lambda_{k+1})$ in 
\eqref{eq:update1a} or \eqref{eq:update1b},}
\begin{equation}\label{eq:updatemulamFromx}
\mu_{k+1}=\frac{\Real(x_{k+1}^HCV_{k}V_k^HAx_{k+1})}{\|V_k^HCx_{k+1}\|^2}, 
\quad 
\lambda_{k+1}=x_{k+1}^HAx_{k+1}.
\end{equation}
Note that $x_{k+1}^HCx_{k+1}=0$ and $x_{k+1}^Hx_{k+1}=1$. 
By the second-order estimate in \Cref{Thm:lsq}, we obtain
the desired error bound for $\lambda_{k+1}$: 
\begin{equation} \label{eq:lambdabd2} 
|\lambda_{k+1}-\lambda_*|\leq \|A-\mu_*C-\lambda_*I\|\kappa_3^2\epsilon^4.
\end{equation} 
For the rest, we only need to estimate the bound of 
the approximation $|\mu_{k+1}-\mu_*|$. 
Let start with an alternative expression of $\mu_*$: 
\begin{align*}
\frac{\widetilde{x}_*^{H}CV_*V_*^HA\widetilde{x}_*}{\|V_*^{H}C\widetilde{x}_*\|^2}
& = \frac{\widetilde{x}_*^{H}CV_*V_*^H(\mu_*C\widetilde{x}_*+\lambda_*\widetilde{x}_*)}{\|V_*^{H}C\widetilde{x}_*\|^2} \\
& = \frac{\mu_*\widetilde{x}_*^{H}CV_*V_*^HC\widetilde{x}_*}{\|V_*^{H}C\widetilde{x}_*\|^2} + \frac{\lambda_*\widetilde{x}_*^{H}CV_*V_*^H\widetilde{x}_*}{\|V_*^{H}C\widetilde{x}_*\|^2} \\ 
& = \mu_* + \frac{\lambda_*\widetilde{x}_*^{H}C\widetilde{x}_*}{\|V_*^{H}C\widetilde{x}_*\|^2} = \mu_*.
\end{align*} 
Next let $\Delta x_{k+1} = \widetilde{x}_*-x_{k+1}$, 
$\Delta V_k = V_*-V_k$, where $\widetilde{x}_*$ is defined in 
\eqref{eq:deftildexstarNew}. Then 
\[ 
\|\Delta x_{k+1}\|\leq \kappa_3\epsilon^2
\quad \mbox{and} \quad 
\|\Delta V_k\|\leq \kappa_4\epsilon^2,
\]
where $\kappa_3,\kappa_4$ are defined 
in \Cref{lemma2RQItype2,lemmaRQItypeII}, respectively.  
%
%
Using \Cref{le:basiclemma}, 
by the definition \eqref{eq:updatemulamFromx} of $\mu_{k+1}$,
\begin{align}
\mu_{k+1} & =\frac{\Real(x_{k+1}^HCV_{k}V_k^HAx_{k+1})}{\|V_k^HCx_{k+1}\|^2} 
      \nonumber \\
& = \frac{\Real((\widetilde{x}_*-\Delta x_{k+1})^HC(V_*-\Delta V_{k})(V_*-\Delta V_{k})^HA(\widetilde{x}_*-\Delta x_{k+1}))}{\|(V_*-\Delta V_{k})^HC(\widetilde{x}_*-\Delta x_{k+1})\|^2} \nonumber \\
& =\frac{\Real((\widetilde{x}_*-\Delta x_{k+1})^HC(V_*-\Delta V_{k})(V_*-\Delta V_{k})^HA(\widetilde{x}_*-\Delta x_{k+1}))}{(\widetilde{x}_*^{H}CV_*-\widetilde{x}_*^{H}C\Delta V_k-\Delta x_{k+1}^HCV_*)(V_*^{H}C\widetilde{x}_*-\Delta V_k^HC\widetilde{x}_*-V_*^{H}C\Delta x_{k+1})+O(\epsilon^4)} \nonumber \\
& = \frac{\widetilde{x}_*^HCV_*V_*^HA\widetilde{x}_* - \Real(\Delta x_{k+1}^HCV_*V_*^HA\widetilde{x}_*+\widetilde{x}_*^HC\Delta V_{k}V_*^HA\widetilde{x}_*+\widetilde{x}_*^HCV_*\Delta V_{k}^HA\widetilde{x}_*)}{\|V_*^HC\widetilde{x}_*\|^2-2\Real(\widetilde{x}_*^HCV_*\Delta V_k^HC\widetilde{x}_*)-2\Real(\widetilde{x}_*^HCV_*V_*^HC\Delta x_{k+1})+O(\epsilon^4)} \nonumber \\
& \quad -\frac{\Real(\widetilde{x}_*^HCV_*V_*^HA\Delta x_{k+1})+O(\epsilon^4)}{\|V_*^HC\widetilde{x}_*\|^2-2\Real(\widetilde{x}_*^HCV_*\Delta V_k^HC\widetilde{x}_*)-2\Real(\widetilde{x}_*^HCV_*V_*^HC\Delta x_{k+1})+O(\epsilon^4)} \nonumber \\
& = \mu_*-\frac{\Real(\Delta x_{k+1}^HCV_*V_*^HA\widetilde{x}_*+\widetilde{x}_*^HC\Delta V_kV_*^HA\widetilde{x}_*+\widetilde{x}_*^HCV_*\Delta V_k^HA\widetilde{x}_*+\widetilde{x}_*^HCV_*V_*^HA\Delta x_{k+1})}{\|V_*^HC\widetilde{x}_*\|^2} \nonumber \\
& \quad +\frac{2\mu_*\Real(\widetilde{x}_*^HCV_*\Delta V_k^HC\widetilde{x}_*+\widetilde{x}_*^HCV_* V_*^HC\Delta x_{k+1})}{\|V_*^HC\widetilde{x}_*\|^2} +O(\epsilon^4), 
\label{eq:typeIIeqs1}
\end{align}
where in the third and fourth equalities, constants in $O(\epsilon^4)$ 
only depend on $(\|A\|$,$\|C\|$,$\kappa_3$,$\kappa_4)$;
in the fifth equality, we use \Cref{le:basiclemma}(ii) and note that
\[
\widetilde{x}_*^HCV_*V_*^HA\widetilde{x}_*
=\widetilde{x}_*^HCV_*V_*^H(\lambda_*\widetilde{x}_*+\mu_*C\widetilde{x}_*)
=\mu_*\|V_*^HC\widetilde{x}_*\|^2.
\]
The corresponding constants in $O(\epsilon^4)$ depend on 
$(\|A\|,\|C\|,\kappa_3,\kappa_4,\sigma_{1,VC_*})$, 
where $\sigma_{1,VC_*}$ is defined in \Cref{lem:cstarII}. 
$\kappa_3,\kappa_4,\sigma_{1,VC_*}$ only depend on $(A,C,\mu_*,\lambda_*)$. 
Note that $A\widetilde{x}_*=\mu_*C\widetilde{x}_*+\lambda_*\widetilde{x}_*$ 
and $V_*^HA = V_*^H(\mu_*C+\lambda_*I)$. 

After a simple reorganization of {\eqref{eq:typeIIeqs1}}, we have
	\begin{equation}
		\begin{aligned}
			&(\mu_{k+1}-\mu_*)\|V_*^HC\widetilde{x}_*\|^2\\
			=& -\lambda_*\Real\left(\Delta x_{k+1}^HCV_*V_*^{H}\widetilde{x}_*+\widetilde{x}_*^{H}C\Delta V_kV_*^{H}\widetilde{x}_*+\widetilde{x}_*^{H}CV_*\Delta V_{k}^H\widetilde{x}_*\right)\\
			&-\mu_*\Real\left(-\Delta x_{k+1}^HCV_*V_*^{H}C\widetilde{x}_*+\widetilde{x}_*^{H}C\Delta V_kV_*^{H}C\widetilde{x}_*-\widetilde{x}_*^{H}CV_*\Delta V_k^H C\widetilde{x}_*\right)\\
			&-\Real\left(\widetilde{x}_*^{H}CV_*(\mu_*V_*^{H}C+\lambda_*V_*^{H})\Delta x_{k+1}\right)+O(\epsilon^4)\\
			=&O(\epsilon^4)-\lambda_*\Real\left(\Delta x_{k+1}^HC\widetilde{x}_*+\widetilde{x}_*^{H}C\Delta V_kV_*^{H}\widetilde{x}_*+\widetilde{x}_*^{H}CV_*\Delta V_{k}^H\widetilde{x}_*+\widetilde{x}_*^{H}CV_*V_*^{H}\Delta x_{k+1}\right)\\
			&-\mu_*\Real\left(-\Delta x_{k+1}^HCV_*V_*^{H}C\widetilde{x}_*+\widetilde{x}_*^{H}C\Delta V_kV_*^{H}C\widetilde{x}_*-\widetilde{x}_*^{H}CV_*\Delta V_k^H C\widetilde{x}_*+\widetilde{x}_*^{H}CV_*V_*^{H}C\Delta x_{k+1}\right)\\
			=&-\lambda_*\Real(T_{11}+T_{12})-\mu_*\Real(T_{21}+T_{22})+O(\epsilon^4),\\
		\end{aligned}
	\end{equation}
    where
	\[
	\begin{aligned}
		& T_{11}=\Delta x_{k+1}^HC\widetilde{x}_*,\\
		& T_{12}=\widetilde{x}_*^{H}C\Delta V_kV_*^{H}\widetilde{x}_*+\widetilde{x}_*^{H}CV_*\Delta V_{k}^H\widetilde{x}_*+\widetilde{x}_*^{H}CV_*V_*^{H}\Delta x_{k+1}, \\
		& T_{21} = -\Delta x_{k+1}^HCV_*V_*^{H}C\widetilde{x}_*+\widetilde{x}_*^{H}CV_*V_*^{H}C\Delta x_{k+1}, \\
		& T_{22} = \widetilde{x}_*^{H}C\Delta V_kV_*^{H}C\widetilde{x}_*-\widetilde{x}_*^{H}CV\Delta V_k^H C\widetilde{x}_*.
	\end{aligned}
	\]
Note that $T_{21}$ and $T_{22}$ are pure imaginary scalars. Therefore, 
\begin{equation}
\Real(T_{21}+T_{22})=0. 
\end{equation}
Meanwhile, 
\[
0  = x_{k+1}^HCx_{k+1}
 = (\widetilde{x}_*-\Delta x_{k+1})^HC(\widetilde{x}_*-\Delta x_{k+1})
 = -2\Real(\Delta x_{k+1}^HC\widetilde{x}_*)+\Delta x_{k+1}^HC\Delta x_{k+1}.
\]
Therefore, 
\[
|\Real(T_{11})| = |\Real(\Delta x_{k+1}^HC\widetilde{x}_*)|
\leq \frac{\|C\|}{2}\kappa_3^2\epsilon^4, 
\]
and we have
\begin{equation}\label{eq:typeIICombine1}
\mu_{k+1}-\mu_*
= -\lambda_*\frac{\Real(T_{12})}{\|V_*^HC\widetilde{x}_*\|^2}+O(\epsilon^4),
\end{equation}
where the constants in $O(\epsilon^4)$ terms only depend on 
$(A,C,\mu_*,\lambda_*)$. Consequently, we only need to prove that 
\[
\Real(T_{12}) = O(\epsilon^4). 
\] 
We begin by decomposing $v_i$:
\begin{align*}
v_{1} & = E_{11}v_{1,*}+E_{12}v_{2,*}+r_1,\\
v_{2}  &= E_{21}v_{1,*}+E_{22}v_{2,*}+r_2,
\end{align*}
where $r_i\perp v_{j,*}$, $i,j=1,2$.
We first derive estimates on $E_{ij}$. 
According to \eqref{eq:vstarvisreal}, 
$E_{11} = v_{1,*}^Hv_1$ and $E_{22}=v_{2,*}^Hv_2$ are nonnegative real scalars. 
Since $\|V_k-V_*\|\leq \kappa_4\epsilon^2$, we have
\begin{equation}\label{eq1}
|E_{11}-1|^2 + |E_{12}|^2 + \|r_1\|^2 \leq \kappa_4^2\epsilon^4 
\quad \mbox{and} \quad
|E_{21}|^2 + |E_{22}-1|^2 + \|r_2\|^2 \leq \kappa_4^2\epsilon^4.
\end{equation}
Therefore,
\begin{equation}\label{eq:Eeqs1}
\begin{aligned}
&|E_{11}-1|\leq \kappa_4\epsilon^2, \quad |E_{12}|\leq \kappa_4\epsilon^2,\quad \|r_1\|\leq \kappa_4\epsilon^2,\quad |E_{12}|^2+\|r_1\|^2\leq\kappa_4^2\epsilon^4,\\
&|E_{22}-1|\leq \kappa_4\epsilon^2, \quad |E_{21}|\leq \kappa_4\epsilon^2,\quad \|r_2\|\leq \kappa_4\epsilon^2,\quad |E_{21}|^2+\|r_2\|^2\leq\kappa_4^2\epsilon^4.
\end{aligned}
\end{equation}
Note that $|E_{11}|^2+E_{12}^2+\|r_1\|^2=1$. Since $E_{11}$, 
$E_{22}$ are nonnegative real scalars, we have
\[
(E_{11}-1)(E_{11}+1) = E_{11}^2 - 1 = -E_{12}^2-\|r_1\|^2.
\]
Similarly, in terms of $E_{12}, E_{22}$ and $r_2$, we have 
\[
(E_{22}-1)(E_{22}+1) = E_{22}^2 - 1 = -E_{12}^2-\|r_2\|^2.
\]
Combined with \eqref{eq:Eeqs1}, this implies
\begin{equation}\label{eq:Eeqs2}
|E_{11}-1|\leq \kappa_4^2\epsilon^4, \quad
|E_{22}-1|\leq \kappa_4^2\epsilon^4
\end{equation}
Utilizing $v_{1}^Hv_{2}=0$, we have
\[E_{11}E_{21}+\overline{E}_{12}E_{22}+r_1^Hr_2=0,\]
which implies
\begin{align}
|E_{21}+\overline{E}_{12}| 
&\leq |(1-E_{11})E_{21}+\overline{E}_{12}(1-E_{22})|+|E_{11}E_{21}
        +\overline{E}_{12}E_{22}| \nonumber \\ 
&\leq \kappa_4^2\epsilon^4(|E_{21}|+|E_{12}|)+|r_1^Hr_2| \nonumber \\
&\leq 2\kappa_4^3\epsilon^6+\kappa_4^2\epsilon^4.  \label{eq:Eeqs3}
\end{align}
With these {caclulations}, we can give a more detailed 
description of $\Delta x_{k+1}$. 
Recall \eqref{eq:defxkplus1New} and \eqref{eq:deftildexstarNew}: 
\begin{equation}
x_{k+1} = tv_{1}+sv_{2} 
\quad \mbox{and} \quad
\widetilde{x}_* = t_*v_{1,*}+s_*v_{2,*}.
\end{equation}
We will perform further analysis between $t$ and $t_*$, $s$ and $s_*$. 
To simplify the expressions, we introduce the following notations 
\[
\begin{array}{ll}
\rho_{11}=v_{1,*}^HCr_1, & \rho_{12}=v_{1,*}^HCr_2, \\
\rho_{21}=v_{2,*}^HCr_1, & \rho_{22}=v_{2,*}^HCr_2. \\
\end{array}
\]
Utilizing \Cref{le:basiclemma} and equations \eqref{eq:Eeqs1} and \eqref{eq:Eeqs2}, 
we have
\begin{align}
t &= \sqrt{\frac{-v_{2}^HCv_{2}}{v_{1}^HCv_{1}-v_{2}^HCv_{2}}} \nonumber \\	
			&=\sqrt{\frac{-v_{2,*}^HCv_{2,*}-2\Real(v_{2,*}^HC(r_2+E_{21}v_{1,*}))+O(\epsilon^4)}{v_{1,*}^HCv_{1,*}-v_{2,*}^HCv_{2,*}+2\Real(v_{1,*}^HC(r_1+E_{12}v_{2,*}))-2\Real(v_{2,*}^HC(r_2+E_{21}v_{1,*}))+O(\epsilon^4)} } \nonumber \\
& = \sqrt{\frac{-c_{2,*}-2\Real(\rho_{22})+O(\epsilon^4)}{c_{1,*}-c_{2,*}+2\Real(\rho_{11})-2\Real(\rho_{22})+O(\epsilon^4)} } \nonumber \\
& = \sqrt{\frac{-c_{2,*}}{c_{1,*}-c_{2,*}}+\frac{-2\Real(\rho_{22})}{c_{1,*}-c_{2,*}}-\frac{-2c_{2,*}\Real(\rho_{11}-\rho_{22})}{(c_{1,*}-c_{2,*})^2}+O(\epsilon^4)} \nonumber \\
& = \sqrt{t_*^2+\frac{-2\Real(\rho_{22})c_{1,*}+2c_{2,*}\Real(\rho_{11})}
				{(c_{1,*}-c_{2,*})^2}+O(\epsilon^4)} \nonumber \\
& = t_* + \frac{-\Real(\rho_{22})c_{1,*}
+c_{2,*}\Real(\rho_{11})}{t_*(c_{1,*}-c_{2,*})^2}+O(\epsilon^4) \label{eq:Eeqs4}
\end{align}
where the constants in $O(\epsilon^4)$ of the second and third equalities 
only depend on $\kappa_4,\|C\|$; and we use \Cref{le:basiclemma}(ii) 
in the fourth equality; in the last equality, we use \Cref{le:basiclemma}(iii), 
with the corresponding constants in $O(\epsilon^4)$ only depending on 
$(\kappa_4,\|C\|,c_{1,*},c_{2,*})$. These constans can be in turn 
viewed as only depending on $(A,C,\mu_*,\lambda_*)$. 
{
\allowdisplaybreaks
By a similar calculation, we have
\begin{equation}\label{eq:Eeqs5}
s =s_*+
\frac{-\Real(\rho_{11})c_{2,*}+c_{1,*}\Real(\rho_{22})}{s_*(c_{1,*}-c_{2,*})^2}
+O(\epsilon^4).
\end{equation}
Now let us back to the term $T_{12}$. Denote $\Delta v_i = v_{i,*}-v_i$, $i=1,2$. 
Using \eqref{eq:Eeqs1}, \eqref{eq:Eeqs2} and \eqref{eq:Eeqs3},  we have
\begin{align*}
T_{12} &= \widetilde{x}_*^{H}C\left[\begin{bmatrix}
				\Delta v_1 & \Delta v_2
			\end{bmatrix}\begin{bmatrix}
				v_{1,*}^H \\
				v_{2,*}^H
			\end{bmatrix}(t_*v_{1,*}+s_*v_{2,*})+\begin{bmatrix}
				v_{1,*} & v_{2,*}
			\end{bmatrix}\begin{bmatrix}
				\Delta v_1^H \\
				\Delta v_{2}^H
			\end{bmatrix}(t_*v_{1,*}+s_*v_{2,*})\right]\nonumber \\
			& +\widetilde{x}_*^{H}C\left[\begin{bmatrix}
				v_{1,*} & v_{2,*}
			\end{bmatrix}\begin{bmatrix}
				v_{1,*}^H \\
				v_{2,*}^H
\end{bmatrix}(t_*v_{1,*}+s_*v_{2,*}-t v_{1}-s v_{2})\right] \nonumber\\
&= \widetilde{x}_*^{H}C\left[t_*\Delta v_1+s_*\Delta v_2+\begin{bmatrix}
				v_{1,*} & v_{2,*}
			\end{bmatrix}\begin{bmatrix}
				-s_*\overline{E}_{12} \\
				-t_*\overline{E}_{21}
			\end{bmatrix}+\begin{bmatrix}
				v_{1,*} & v_{2,*}
			\end{bmatrix}\begin{bmatrix}
				t_*-t-s E_{21} \\ s_*-s-t E_{12}
			\end{bmatrix}\right]+O(\epsilon^4) \nonumber \\
& = \widetilde{x}_*^{H}C\left[t_*\Delta v_1+s_*\Delta v_2+\begin{bmatrix}
				v_{1,*} & v_{2,*}
			\end{bmatrix}\begin{bmatrix}
	t_*-t-s_*\overline{E}_{12}-s E_{21} \\ s_*-s-t_*\overline{E}_{21}-t E_{12}
	\end{bmatrix}\right]+O(\epsilon^4)\nonumber\\
& = \widetilde{x}_*^{H}C\left[t_*\Delta v_1+s_*\Delta v_2+\begin{bmatrix}
				v_{1,*} & v_{2,*}
			\end{bmatrix}\begin{bmatrix}
				t_*-t+s_*E_{21}-s E_{21} \\ s_*-s+t_*E_{12}-t E_{12}
\end{bmatrix}\right]+O(\epsilon^4)\nonumber\\
& = \widetilde{x}_*^{H}C\left[t_*\Delta v_1+s_*\Delta v_2+v_{1,*}(t_*-t)+v_{2,*}(s_*-s)\right]+O(\epsilon^4)\nonumber\\
& = \widetilde{x}_*^{H}C[-t_*r_1-s_*r_2-t_*E_{12}v_{2,*}-s_*E_{21}v_{1,*}+v_{1,*}(t_*-t)+v_{2,*}(s_*-s)]+O(\epsilon^4).\nonumber
\end{align*}
where all constants in $O(\epsilon^4)$ terms only depend on 
$A,C,\mu_*,\lambda_*$. 
}

According to \eqref{eq:Eeqs4} and \eqref{eq:Eeqs5}, we have
\begin{align*}
T_{12} &= O(\epsilon^4)+\begin{bmatrix}
				t_{*} & s_{*}
			\end{bmatrix}V_*^HC\left[-t_*r_1-s_*r_2+V_*\begin{bmatrix}
				t_*-t-s_*E_{21}	 \\ s_*-s-t_*E_{12}
			\end{bmatrix}\right] \nonumber \\
& = O(\epsilon^4)
  -t_*^2v_{1,*}^HCr_1-t_*s_*v_{1,*}^HCr_2-s_*v_{2,*}^HCt_*r_1-s_*^2v_{2,*}^HCr_2 
      \nonumber \\
&+t_*(t_*-t-s_*E_{21})c_{1,*}+s_*(s_*-s-t_*E_{12})c_{2,*} \nonumber \\
&= O(\epsilon^4)
-t_*^2\rho_{11}-t_*s_*\rho_{12}-t_*s_*\rho_{21}-s_*^2\rho_{22}
-t_*s_*E_{21}c_{1,*}-t_*s_*E_{12}c_{2,*} \nonumber \\
&+c_{1,*}\frac{\Real(\rho_{22})c_{1,*}-c_{2,*}\Real(\rho_{11})}{(c_{1,*}-c_{2,*})^2}+c_{2,*}\frac{\Real(\rho_{11})c_{2,*}-c_{1,*}\Real(\rho_{22})}{(c_{1,*}-c_{2,*})^2}.
\end{align*}
Therefore, 
\begin{align}
\Real(T_{12}) 
&= \Real\left(\frac{c_{2,*}\rho_{11}}{c_{1,*}-c_{2,*}}+c_{1,*}\frac{\rho_{22}c_{1,*}-c_{2,*}\rho_{11}}{(c_{1,*}-c_{2,*})^2}-\frac{c_{1,*}\rho_{22}}{c_{1,*}-c_{2,*}}+c_{2,*}\frac{\rho_{11}c_{2,*}-c_{1,*}\rho_{22}}{(c_{1,*}-c_{2,*})^2}\right)\nonumber\\
& \quad 
-\Real\left(t_*s_*\rho_{12}+t_*s_*\rho_{21}+t_*s_*E_{21}c_{1,*} + t_*s_*E_{12}c_{2,*}\right)+O(\epsilon^4)\nonumber\\
&=-t_*s_*\Real(\rho_{12}+\rho_{21}+E_{21}c_{1,*}+E_{12}c_{2,*})+O(\epsilon^4).\label{eq:typeIICombine2}
\end{align}
Further note that
\begin{align*}
0 &= v_{1}^HCv_{2}
=(v_{1,*}+E_{12}v_{2,*}+r_1)^HC(v_{2,*}+E_{21}v_{1,*}+r_2)+O(\epsilon^4)\\
&= E_{21}c_{1,*}+\overline{E}_{12}c_{2,*}+\rho_{12}+\overline{\rho}_{21}
   +O(\epsilon^4).
\end{align*}
Thus we obtain 
\begin{equation}\label{eq:typeIICombine3}
\Real(\rho_{12}+\rho_{21}+E_{21}c_{1,*}+E_{12}c_{2,*})=O(\epsilon^4)
\end{equation}
and 
\[
\Real(T_{12})=O(\epsilon^4).
\]
Combine with \eqref{eq:typeIICombine1}, \eqref{eq:typeIICombine2}
and \eqref{eq:typeIICombine3}, we 
find positive constants $\widetilde{\epsilon}_0,\kappa_1$ depending 
on $(A,C,\mu_*,\lambda_*)$, such that if $\epsilon\leq\widetilde{\epsilon}_0$, 
we have
\begin{equation} \label{eq:mubd2} 
|\mu_{k+1}-\mu_*|\leq \kappa_1\epsilon^4.
\end{equation} 
Combine the bounds \eqref{eq:lambdabd2} and \eqref{eq:mubd2},  
we reach the result (ii) by letting
$\epsilon_0=\min\{\epsilon_4,\widetilde{\epsilon}_0\}$.
\end{proof}

\paragraph{Proof of Theorem \ref{Thm:RQItype2}.}  
The theorem is a direct consequence of 
\Cref{lemma2RQItype2,Thm:verylong}. 
\hfill $\Box$.


\section{Conclusions}\label{sec:conclusion}
In this part, we presented 
a rigorous convergence analysis of the proposed 2DRQI,
and showed that the 2DRQI is locally quadratically convergent
when the target 2D-eigentriplet is nonsingular. 
The quadratical convergence of the 2DRQI is verified 
by numerical examples presented in Part II of this work \cite[Sec.6]{2DEVPII}. 


\appendix

\section{Proof of Theorem~2.1 in Part II} \label{subsec:proofThm25}

The following theorem shows that if a 2D-eigenvector is near
the subspace spanned by an $n\times 2$ orthonormal matrix $V$, 
then the 2D Ritz triplets of the 2D Rayleigh quotient $(V^H A V, V^H C V)$
induced by $V$ will contain a good approximation to a 2D-eigentriplet. 

\begin{theorem}\label{Thm2DRQI}
Let $(\mu_*,\lambda_*,x_*)$ be a 2D-eigentriplet of $(A,C)$.
For $\gamma>0$, denote $\mathcal{V}_{\gamma}$ as the 
set of $n\times2$ orthonormal matrices $V$ satisfying
\begin{enumerate}
\item $V^HCV$ is indefinite and diagonal,
\item $|(V^HAV)_{12}|\geq\gamma$,
\item $\left|\det(V^HCV)\right|\geq \gamma$.
\end{enumerate}
Then there exist positive constants $\alpha_1$, $\alpha_2$ and $\alpha_3$ 
only depending on $(A, C, \mu_*, \lambda_*)$ and $\gamma$,
such that for any $V\in\mathcal{V}_{\gamma}$, let 
\[ 
\epsilon=\dist(x_*, \Span\{V\})\equiv\min\{\|x_*-v\|\ |\ v\in\Span\{V\}\} 
\] 
and assume $\epsilon<1$, there exists a 2D Ritz triplet 
$(\nu,\theta, Vz)$ satisfying
\[
|\nu-\mu_*| \leq \alpha_1 \epsilon,  \quad
|\theta -\lambda_*| \leq \alpha_2 \epsilon^2
\quad \mbox{and} \quad
\|Vz-x_*\| \leq \alpha_3 \epsilon.
\]	
\end{theorem}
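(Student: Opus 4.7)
The plan is to reuse the local analysis of \Cref{sec:converge_analysis_I}, replacing the iteratively generated $V_k$ by an abstract $V\in\mathcal{V}_{\gamma}$. The three hypotheses defining $\mathcal{V}_{\gamma}$ are precisely the quantitative properties (indefiniteness of $C_k$ with a gap, $|a_{12,k}|$ bounded below) that had to be \emph{established} for the iterates in \Cref{Thm:behavior2,thm:Ckindef}; here they are furnished by assumption, so many estimates transfer verbatim.

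First I would construct a unit vector $\widetilde{x}\in\Span\{V\}$ with $\widetilde{x}^HC\widetilde{x}=0$ and $\|\widetilde{x}-x_*\|=O(\epsilon)$. Let $\widehat{x}=VV^Hx_*/\|VV^Hx_*\|$ be the normalized projection of $x_*$ onto $\Span\{V\}$. Since $\|VV^Hx_*\|^2=1-\epsilon^2$, an elementary computation (for $\epsilon\leq 1$) yields $\|\widehat{x}-x_*\|\leq\sqrt{2}\,\epsilon$, and then
\[
|\widehat{x}^HC\widehat{x}|=|\widehat{x}^HC\widehat{x}-x_*^HCx_*|\leq (2\|C\|+\|C\|\epsilon)\,\|\widehat{x}-x_*\|=O(\|C\|\epsilon).
\]
Writing $\widehat{x}=t_pv_1+s_pv_2$ in the columns of $V$ and letting $c_1,c_2$ denote the diagonal entries of $V^HCV$ (with $c_1>0>c_2$ by hypothesis~1 and $|c_1c_2|\geq\gamma$ by hypothesis~3), define
\[
\widetilde{x}=\sign(t_p)\sqrt{\tfrac{-c_2}{c_1-c_2}}\,v_1+\sign(s_p)\sqrt{\tfrac{c_1}{c_1-c_2}}\,v_2.
\]
The construction of \Cref{prop2} gives $\widetilde{x}^HC\widetilde{x}=0$, $\|\widetilde{x}\|=1$, and, by \eqref{eq:lemmaprop2eqs1},
\[
\|\widetilde{x}-\widehat{x}\|\leq\frac{|\widehat{x}^HC\widehat{x}|}{\sqrt{-c_1c_2}}\leq\frac{|\widehat{x}^HC\widehat{x}|}{\sqrt{\gamma}},
\]
so $\|\widetilde{x}-x_*\|=O(\epsilon)$ with constants depending only on $(A,C,\mu_*,\lambda_*,\gamma)$. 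Up to a global unimodular factor $\sigma:=\sign(t_p)$, one has $\widetilde{x}=\sigma\,Vz(\widetilde{\alpha})$ for some $\widetilde{\alpha}$ with $|\widetilde{\alpha}|=1$.

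Next I would identify the correct Ritz triplet among the two. Hypothesis~2 forces $|a_{12}|\geq\gamma>0$, so the $2\times2$ 2DEVP of $(V^HAV,V^HCV)$ has exactly the two simple triplets $(\nu(\alpha_i),\theta(\alpha_i),Vz(\alpha_i))$ with $\alpha_i=\pm\overline{a_{12}}/|a_{12}|$. Mimicking the argument in \Cref{lemma:xkp}, I substitute $\widetilde{x}=x_*+(\widetilde{x}-x_*)$ into $\nu(\widetilde{\alpha})=\widetilde{x}^HCVV^HA\widetilde{x}/\|V^HC\widetilde{x}\|^2$, use $Ax_*=\mu_*Cx_*+\lambda_*x_*$, and conclude $|\imag\nu(\widetilde{\alpha})|=O(\epsilon)$. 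Combining with the explicit identity $\imag\nu(\widetilde{\alpha})=\imag(\widetilde{\alpha}\,a_{12})/\sqrt{-c_1c_2}$ and hypotheses~2,3, I would obtain $|\sin\widetilde{\theta}|=O(\epsilon)$ when $\widetilde{\alpha}=e^{\ii\widetilde{\theta}}\overline{a_{12}}/|a_{12}|$. Picking $j\in\{1,2\}$ with the sign of $\cos\widetilde{\theta}$ gives $|\alpha_j-\widetilde{\alpha}|=O(\epsilon)$, and therefore $\|Vz(\alpha_j)-Vz(\widetilde{\alpha})\|\leq|\alpha_j-\widetilde{\alpha}|=O(\epsilon)$. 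Taking $z:=\sigma\,z(\alpha_j)$ as the representative of the Ritz triplet,
\[
\|Vz-x_*\|=\|Vz(\alpha_j)-\overline{\sigma}x_*\|\leq\|Vz(\alpha_j)-Vz(\widetilde{\alpha})\|+\|\widetilde{x}-x_*\|=O(\epsilon),
\]
which supplies the $\alpha_3\epsilon$ bound.

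Finally I would derive the eigenvalue estimates. Since $Vz$ is unit norm with $(Vz)^HC(Vz)=0$, \Cref{Thm:lsq} produces
\[
|\theta(\alpha_j)-\lambda_*|\leq\|A-\mu_*C-\lambda_*I\|\,\|Vz-x_*\|^2=O(\epsilon^2),
\]
which is the $\alpha_2\epsilon^2$ bound. For $\nu$, the residual identity
\[
(V^HAV-\mu_*V^HCV-\lambda_*I)z=V^H(A-\mu_*C-\lambda_*I)(Vz-x_*)=:r,
\]
with $\|r\|=O(\epsilon)$, is then multiplied on the left by $z^HV^HCV$; using $z^HV^HCVz=0$ and $\|V^HCVz\|=\sqrt{-c_1c_2}\geq\sqrt{\gamma}$ yields $|\nu(\alpha_j)-\mu_*|\leq\|r\|/\sqrt{\gamma}=O(\epsilon)$, the $\alpha_1\epsilon$ bound.

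The main obstacle is the selection step: showing that $|\imag\nu(\widetilde{\alpha})|=O(\epsilon)$ genuinely separates $\alpha_1$ from $\alpha_2$ and delivers a constant independent of $V\in\mathcal{V}_{\gamma}$ requires \emph{simultaneous} lower bounds on $|a_{12}|$ and $\sqrt{-c_1c_2}$. Hypotheses~2 and~3 are therefore not cosmetic: without them the constants $\alpha_1,\alpha_3$ would blow up, and the proper Ritz approximation could not be extracted from the two candidates.
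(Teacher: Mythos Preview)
Your proposal is correct and follows essentially the same route as the paper's proof: project $x_*$ onto $\Span\{V\}$ and normalize to obtain $\widehat{x}$ with $\|\widehat{x}-x_*\|\leq\sqrt{2}\epsilon$, invoke the construction of \Cref{prop2} (with $\sqrt{-c_1c_2}\geq\sqrt{\gamma}$ replacing the role of \Cref{thm:Ckindef}) to produce $\widetilde{x}\in\mathcal{X}$, then rerun the selection and residual arguments of \Cref{lemma:xkp} (with $|a_{12}|\geq\gamma$ replacing \Cref{Thm:behavior2}) to obtain the three bounds. The paper phrases some estimates via $\dist(\cdot,\mathcal{X}_{x_*})$ and absorbs the unimodular phase only at the very end, whereas you track $\|\cdot-x_*\|$ directly and carry the phase $\sigma$ explicitly, but these are cosmetic differences.
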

\begin{proof} 
The proof is similar to the proof of the quadratic convergence of
the 2DRQI for computing a nonsingular simple 2D eigentriplet 
presented in \Cref{sec:converge_analysis_II}. 

Let
\[
V^HAV = \begin{bmatrix}
a_{11} & a_{12}\\
a_{21} & a_{22}
\end{bmatrix},  \quad 
V^HCV = \Diag(c_1,c_2) \,\, \mbox{with}\,\, c_1 > 0 > c_2
\]
and
\[ 
\mathcal{X} = \{x\ |\ x\in \Span\{V\}, x^HCx =0, x^Hx = 1\}.
\]
Assume $\widehat{x}^{\rm (p)}\in\Span\{V\}$ satisfies 
$\|x_*-\widehat{x}^{\rm (p)}\|=\epsilon$. Then 
since $\widehat{x}^{\rm (p)}$ is the closest vector to $x_*$ in $\Span\{V\}$, $\widehat{x}^{{\rm (p)}H}(x_*-\widehat{x}^{\rm (p)}) = 0$ and thus, 
\[
1 = \|x_*\| = \sqrt{\|\widehat{x}^{\rm (p)}\|^2+\|x_*-\widehat{x}^{\rm (p)}\|^2}
=\sqrt{\|\widehat{x}^{\rm (p)}\|^2+\epsilon^2}.
\]
Since $\epsilon<1$, we have $\widehat{x}^{\rm (p)}\neq 0$.
Define $\widehat{x} = \widehat{x}^{\rm (p)}/\|\widehat{x}^{\rm (p)}\|$. 
Then $\|\widehat{x}\|=1$ and
\begin{align*}
\|\widehat{x}-x_*\| 
&= \sqrt{2-2\Real(x_*^H\widehat{x}^{\rm (p)})/\|\widehat{x}^{\rm (p)}\|}\\
&= \sqrt{2-2\|\widehat{x}^{\rm (p)}\|}
=\sqrt{2-2\sqrt{1-\epsilon^2}}
\leq \sqrt{2}\epsilon.	
\end{align*}
In the following, we denote
$\mathcal{X}_{x_*}=\{\gamma x_* \mid \gamma \in \mathbb{C}, \|\gamma|=1\}$.

Recall that in \Cref{prop2}, we proved that 
(c.f., \eqref{eq:xcxk},\eqref{eq:xxpdiff},\eqref{eq:lemmaprop2eqs1},
\eqref{eq:lemmaprop2eqs2}), 
if $V_k^HCV_k$ is indefinite, then for any unit vector $x \in \Span\{V_k\}$, 
we can find 
\[
\widetilde{x}\in\mathcal{X}_k\equiv\{y \mid y\in\Span\{V_k\}, y^Hy=1, y^HCy=0\},
\]
such that
\begin{equation}
\dist(\widetilde{x},\mathcal{X}_{x_*})\leq \left(\frac{2\|C\|}{\sqrt{-c_{1,k}c_{2,k}}}+1\right)\dist(x,\mathcal{X}_{x_*}), 
\end{equation}
where $c_{1,k},c_{2,k}$ are eigenvalues of $V_k^HCV_k$. 
		
Since under the assumption, $V^HCV$ is indefinite. Therefore, 
after substituting $V$, $\mathcal{X}$ for $V_k$, $\mathcal{X}_k$, respectively,
we can also prove that for the unit vector $\widehat{x}$ in $\Span\{V\}$, 
there exists $\widetilde{x}\in\mathcal{X}$, such that 
\begin{equation}\label{eq:2DRQ1}
\dist(\widetilde{x},\mathcal{X}_{x_*})\leq \left(\frac{2\|C\|}{\sqrt{-c_{1}c_{2}}}+1\right)\dist(\widehat{x}, \mathcal{X}_{x_*})\leq \left(\frac{2\sqrt{2}\|C\|}{\sqrt{-c_{1}c_{2}}}+\sqrt{2}\right)\epsilon.
\end{equation}
Now we prove the approximation property of 2D Ritz teiplets.
In the proof of \Cref{lemma:xkp}, we only use the following conditions.
\begin{itemize}
\item When $\epsilon\leq\min\{\epsilon_1,\epsilon_2,\epsilon_T\}$, $\widetilde{x}_{k+1}^{\rm (p)}$ is well defined and thus we can find $\widetilde{x}_{k+1}\in\mathcal{X}_k$ such that $\dist(\widetilde{x}_{k+1},\mathcal{X}_{x_*})$ is sufficiently small, i.e., \eqref{eq:lemmaprop2eqs2}. 

\item When $\epsilon\leq\epsilon_2$, $|a_{12,k}|\geq \frac{|a_{12,*}|}{2}>0$. 

\item When $\epsilon\leq\epsilon_T$, $c_{1,k}>0,c_{2,k}<0,\sqrt{-c_{1,k}c_{2,k}}\geq\frac{\sqrt{-c_{1,*}c_{2,*}|}}{2}>0$. 
\end{itemize}
Under these conditions, we prove that there exists a 2D Ritz triplet $(\nu_{k,j},\theta_{k,j},x_{k,j})$ such that (c.f., \eqref{eq:sinTildeTheta}\eqref{eq:lemmaxkpeqs3}\eqref{eq:muproject}\eqref{eq:lemmaxkpseqs5}\eqref{eq:lemmaxkpseqs6}\eqref{eq:lamboundstype1}): 
\begin{equation}\label{eq:2DRQ2}
\begin{aligned}
				\dist(x_{k,j},\mathcal{X}_{x_*})&\leq \left(\sqrt{2}\frac{\|A-\mu_*C-\lambda_*I\|}{|a_{12,k}|}+1\right)\dist(\widetilde{x},\mathcal{X}_{x_*})\\
				|\nu_{k,j}-\mu_*|&\leq \frac{\|A-\mu_*C-\lambda_*I\|}{\sqrt{-c_{1,k}c_{2,k}}}\dist(x_{k,j},\mathcal{X}_{x_*}),\\
				|\theta_{k,j}-\lambda_*|&\leq \|A-\mu_*C-\lambda_*I\|\dist(x_{k,j},\mathcal{X}_{x_*})^2.
\end{aligned}
\end{equation}
Note that we have now found $\widetilde{x}\in\mathcal{X}$ such that 
\[
\dist(\widetilde{x},\mathcal{X}_{x_*})\leq 
\left(\frac{2\sqrt{2}\|C\|}{\sqrt{-c_{1}c_{2}}}+\sqrt{2}\right)\epsilon. 
\] 
Furthermore, we have $|(V^HAV)_{12}|\geq\gamma>0$ and  
$-c_{1}c_{2}=-\det(V^HCV)\geq \gamma>0$. 
Therefore, we can follow the same argument without adding requirements to $\epsilon$, and finally prove that $(V^HAV,V^HCV)$ has two 2D Ritz triplets with one $(\nu,\theta,V\widehat{z})$ of them satisfies:
\begin{equation}\label{eq:2DRQ3}
\begin{aligned}
\dist(V\widehat{z},\mathcal{X}_{x_*})&\leq \left(\sqrt{2}\frac{\|A-\mu_*C-\lambda_*I\|}{|a_{12}|}+1\right)\dist(\widetilde{x},\mathcal{X}_{x_*})\\
|\nu-\mu_*|&\leq \frac{\|A-\mu_*C-\lambda_*I\|}{\sqrt{-c_{1}c_{2}}}\dist(V\widehat{z},\mathcal{X}_{x_*}),\\
|\theta-\lambda_*|&\leq 		\|A-\mu_*C-\lambda_*I\|\dist(V\widehat{z},\mathcal{X}_{x_*})^2.
\end{aligned}
\end{equation}
Using \eqref{eq:2DRQ1} and the assumptions, we have
\begin{equation}\label{eq:2DRQ3}
\begin{aligned}
\dist(V\widehat{z},\mathcal{X}_{x_*})&\leq \left(\sqrt{2}\frac{\|A-\mu_*C-\lambda_*I\|}{\gamma}+1\right)\left(\frac{2\sqrt{2}\|C\|}{\sqrt{\gamma}}+\sqrt{2}\right)\epsilon\equiv\alpha_1\epsilon\\
|\nu-\mu_*|&\leq \frac{\|A-\mu_*C-\lambda_*I\|}{\sqrt{\gamma}}\alpha_1\epsilon\equiv\alpha_2\epsilon,\\
|\theta-\lambda_*|&\leq 		\|A-\mu_*C-\lambda_*I\|\alpha_1^2\epsilon^2\equiv\alpha_3\epsilon^2.
\end{aligned}
\end{equation}
Let $\gamma$ satisfy $|\gamma|=1$ and 
$\|V\widehat{z}-\gamma x_*\|=\dist(V\widehat{z},\mathcal{X}_{x_*})$. 
Then $\|\overline{\gamma}V\widehat{z}- x_*\|\leq\alpha_1\epsilon$. 
Let $z = \overline{\gamma}\widehat{z}$,  then we reach the conclusion.
\end{proof}


\section{Basic inequalities}
We present three basic inequalities that are used in the convergence analysis.

\begin{lemma}\label{le:basiclemma}
Let $g_1,g_2,r_1,r_2,\epsilon$ be functions of $x\in\mathbb{C}^n$, 
where $\epsilon$ can be viewed as a parameter related to $x$, e.g., the distance between $x$ to some fixed point $x_*$. Assume there exist positive constants $\epsilon_0, \alpha_1,\alpha_2,\beta,\overline{g}_{1},\underline{g}_{2}$, such that when $\epsilon\leq\epsilon_0$, we have
\begin{equation}\label{eq:limit_in_basiclemma}
|g_1(x)|\leq \overline{g}_{1}; \quad
|g_2(x)|\geq \underline{g}_{2}; \quad
|r_i(x)|\leq \alpha_i\epsilon^{\beta} \,\, \mbox{for $i=1,2$}.
\end{equation}
\begin{enumerate}[(i)]

\item Assume when $\epsilon\leq\epsilon_0$, there exists positive constants 
$\underline{g}_{1}$, $\overline{g}_{2}$, such that $f(x)$ and $g_1(x)$ further satisfy
\begin{equation}
\frac{g_1(x)}{g_2(x)}\geq0, \quad
\frac{g_1(x)+r_1(x)}{g_2(x)+r_2(x)}\geq0, \quad
|g_1(x)|\geq \underline{g}_{1},\quad
|g_2(x)|\leq \overline{g}_{2},
\end{equation}
then there exists positive constants $\epsilon_1$, $\kappa_1^{\rm (b)}$ depending on 
constants $\epsilon_0,\underline{g}_{1},\underline{g}_{2},\overline{g}_{1},\overline{g}_{2},\alpha_1,\alpha_2,\beta$ such that when $\epsilon\leq\epsilon_1$, 
\begin{equation}\label{eq:basiclemmaeqs0}
\left| \sqrt{\frac{g_1(x)+r_1(x)}{g_2(x)+r_2(x)}}-\sqrt{\frac{g_1(x)}{g_2(x)}}\right|\leq \kappa_1^{\rm (b)}\epsilon^{\beta}.
\end{equation}
		
\item Assume there exists functions $s_1(x)$ and $s_2(x)$ satisfying 
$|s_1(x)|\leq\alpha_3\epsilon^{2\beta}$ and 
$|s_2(x)|\leq\alpha_4\epsilon^{2\beta}$ when $\epsilon\leq\epsilon_0$.
Then there exists positive constants $\epsilon_2$, $\kappa_2^{\rm (b)}$ 
depending on $\epsilon_0,\overline{g}_{1},\underline{g}_{2},\alpha_1,\alpha_2,\alpha_3,\alpha_4,\beta$, such that when $\epsilon\leq\epsilon_2$, 
\[
\left|\frac{g_1+r_1+s_1}{g_2+r_2+s_2}-\frac{g_1}{g_2}-\frac{r_1}{g_2} +\frac{g_1r_2}{g_2^2}\right|\leq \kappa_2^{\rm (b)}\epsilon^{2\beta}.			
\]

\item  Let $s_1(x),s_2(x), \underline{g}_{1}$ be defined as 
in (i) and (ii). Assume $g_1(x)\geq0$ and $g_1(x)+r_1(x)+s_1(x)\geq0$ 
when $\epsilon\leq\epsilon_0$. Then there exists positive constants 
$\epsilon_3, \kappa_3^{\rm (b)}$ depending on $\epsilon_0,\underline{g}_{1},\alpha_1,\alpha_3,\beta$ such that when $\epsilon\leq\epsilon_3$,
\[
\left|\sqrt{g_1(x)+r_1(x)+s_1(x)}-\sqrt{g_1(x)}-\frac{r_1(x)}{2\sqrt{g_1(x)}}\right|\leq \kappa_3^{\rm (b)}\epsilon^{2\beta}.
\]
\end{enumerate}
\end{lemma}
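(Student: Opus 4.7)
The proof plan is to treat each of the three estimates separately, using elementary algebraic manipulations together with the uniform bounds \eqref{eq:limit_in_basiclemma} to control denominators and remainders. The common thread is to first bound each denominator away from zero on a possibly smaller range $\epsilon \le \epsilon_i$, and then to isolate the principal part of the error so that the leftover is manifestly $O(\epsilon^\beta)$ or $O(\epsilon^{2\beta})$.

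For part (i), I would use the rationalization identity
\[
\sqrt{\tfrac{g_1+r_1}{g_2+r_2}} - \sqrt{\tfrac{g_1}{g_2}}
=\frac{\tfrac{g_1+r_1}{g_2+r_2}-\tfrac{g_1}{g_2}}
{\sqrt{\tfrac{g_1+r_1}{g_2+r_2}}+\sqrt{\tfrac{g_1}{g_2}}}
=\frac{r_1 g_2 - g_1 r_2}{(g_2+r_2)\,g_2\left(\sqrt{\tfrac{g_1+r_1}{g_2+r_2}}+\sqrt{\tfrac{g_1}{g_2}}\right)}.
\]
By shrinking $\epsilon_0$ if necessary so that $|r_2|\le \underline{g}_2/2$, the factor $|g_2+r_2|$ is bounded below by $\underline{g}_2/2$, and the square-root sum in the denominator is bounded below by $\sqrt{\underline{g}_1/\overline{g}_2}$; the numerator is bounded by $(\alpha_1\overline{g}_2 + \overline{g}_1\alpha_2)\epsilon^\beta$. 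Combining gives the constant $\kappa_1^{\rm (b)}$ explicitly.

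For part (ii), I would expand $1/(g_2+r_2+s_2)$ using the Neumann-type identity
\[
\frac{1}{g_2+r_2+s_2}
=\frac{1}{g_2}-\frac{r_2+s_2}{g_2(g_2+r_2+s_2)},
\]
again on a range where $|r_2+s_2|\le \underline{g}_2/2$ so the denominator on the right is bounded below. Multiplying by $(g_1+r_1+s_1)$, collecting the leading terms $\tfrac{g_1}{g_2}+\tfrac{r_1}{g_2}-\tfrac{g_1 r_2}{g_2^2}$ and bookkeeping on the remainder yields a residual that is a sum of products of the form $\tfrac{s_1}{g_2}$, $\tfrac{r_1 r_2}{g_2^2}$, $\tfrac{g_1 s_2}{g_2^2}$, and higher-order cross terms, each bounded by a constant times $\epsilon^{2\beta}$ using \eqref{eq:limit_in_basiclemma}.

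For part (iii), I would again rationalize: write
\[
\sqrt{g_1+r_1+s_1}-\sqrt{g_1}
=\frac{r_1+s_1}{\sqrt{g_1+r_1+s_1}+\sqrt{g_1}},
\]
then subtract $\tfrac{r_1}{2\sqrt{g_1}}$ and combine over a common denominator. Using $g_1\ge \underline{g}_1>0$, shrinking $\epsilon_0$ so that $g_1+r_1+s_1\ge \underline{g}_1/2$, and applying the bound $|s_1|\le \alpha_3 \epsilon^{2\beta}$ together with the elementary estimate $|\sqrt{g_1+r_1+s_1}-\sqrt{g_1}|\le C\epsilon^\beta$ (from part (i) applied with $g_2\equiv 1$), the remaining expression collapses into a sum of terms all of size $O(\epsilon^{2\beta})$. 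The main obstacle in the whole lemma is not any single algebraic step but the bookkeeping of constants: each of the three bounds must be stated in terms of \emph{only} the data $(\epsilon_0, \underline{g}_1, \underline{g}_2, \overline{g}_1, \overline{g}_2, \alpha_i, \beta)$, so I would be careful to record, at each shrinkage of $\epsilon_0$, exactly which of these constants the new threshold depends on, and to write each $\kappa_i^{\rm (b)}$ as an explicit polynomial-type expression in them.
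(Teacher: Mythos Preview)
Your proposal is correct and complete; the three parts go through exactly as you sketch. The route differs from the paper's in parts (i) and (iii). For (i), the paper does not rationalize but instead bounds the signed difference by the worst case $\sqrt{(|g_1|+|r_1|)/(|g_2|-|r_2|)}-\sqrt{|g_1|/|g_2|}$ and then applies the mean value theorem to $\widetilde h(t)=\sqrt{(|g_1|+\alpha_1 t)/(|g_2|-\alpha_2 t)}$ at $t=\epsilon^\beta$; for (iii), the paper uses the second-order Taylor expansion with Lagrange remainder of $f(t)=\sqrt{g_1+t}$ at $t=r_1+s_1$. Your rationalization argument is more elementary (no calculus) and yields the same explicit constants; the paper's calculus approach is perhaps slightly slicker in (iii) since the second-order remainder immediately gives the $\epsilon^{2\beta}$ factor without needing to feed the $O(\epsilon^\beta)$ estimate from (i) back in. For (ii) the two arguments are essentially identical---the paper writes the algebraic decomposition
\[
\left|\frac{s_1}{g_2+r_2+s_2}\right|+\left|\frac{-g_1s_2-r_1r_2-r_1s_2}{(g_2+r_2+s_2)g_2}\right|+\left|\frac{g_1r_2^2+g_1r_2s_2}{(g_2+r_2+s_2)g_2^2}\right|
\]
directly, which is exactly what your Neumann expansion produces after collecting terms.
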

\begin{proof}


For the first inequality, note that when 
$|g_1(x)|\geq|r_1(x)|$ and $|g_2(x)|\geq|r_2(x)|$, 
\[\left|\sqrt{\frac{g_1(x)+r_1(x)}{g_2(x)+r_2(x)}}-\sqrt{\frac{g_1(x)}{g_2(x)}}\right|\leq\max\left\{\sqrt{\frac{|g_1(x)|+|r_1(x)|}{|g_2(x)|-|r_2(x)|}}-\sqrt{\frac{|g_1(x)|}{|g_2(x)|}}, \sqrt{\frac{g_1(x)}{g_2(x)}}-\sqrt{\frac{|g_1(x)|-|r_1(x)|}{|g_2(x)|+|r_2(x)|}}\right\}.\]
Thus we only need to prove both $\sqrt{\frac{|g_1|+|r_1|}{|g_2|-|r_2|}}-\sqrt{\frac{|g_1|}{|g_2|}}$ and $ \sqrt{\frac{g_1}{g_2}}-\sqrt{\frac{|g_1|-|r_1|}{|g_2|+|r_2|}}$ can be controlled by a term of the form in \eqref{eq:basiclemmaeqs0}. We only prove the case for $\sqrt{\frac{|g_1|+|r_1|}{|g_2|-|r_2|}}-\sqrt{\frac{|g_1|}{|g_2|}}$. The proof for the second case is similar.
		
Assume $\epsilon\leq \min\left\{\left(\frac{\underline{g}_{1}}{2\alpha_1}\right)^{\frac{1}{\beta}}, \left(\frac{\underline{g}_{2}}{2\alpha_2}\right)^{\frac{1}{\beta}}\right\}$, we have $|g_1(x)|\geq|r_1(x)|$, $|g_2(x)|\geq|r_2(x)|$, and
\begin{equation}
\begin{aligned}
				\sqrt{\frac{|g_1|+|r_1|}{|g_2|-|r_2|}}-\sqrt{\frac{|g_1|}{|g_2|}}&\leq\sqrt{\frac{|g_1|+\alpha_1\epsilon^{\beta}}{|g_2|-\alpha_2\epsilon^{\beta}}}-\sqrt{\frac{|g_1|}{|g_2|}}.
\end{aligned}
\end{equation}
Let $t=\epsilon^{\beta}$ and consider 
$\widetilde{h}(t) = \sqrt{\frac{|g_1|+\alpha_1t}{|g_2|-\alpha_2t}}$. 
By the middle-value theorem, there exists $\xi\in (0,t)$, such that
\[
\widetilde{h}(t) = \widetilde{h}(0) + \widetilde{h}'(\xi)t
= \sqrt{\frac{|g_1|}{|g_2|}} + \frac{\alpha_1g_2+\alpha_2g_1}{2\sqrt{g_1+\alpha_1\xi}\left(g_2-\alpha_2\xi\right)^{\frac{3}{2}}}t.
\]
Since $\epsilon\leq \min\left\{\left(\frac{\underline{g}_{1}}{2\alpha_1}\right)^{\frac{1}{\beta}}, \left(\frac{\underline{g}_{2}}{2\alpha_2}\right)^{\frac{1}{\beta}}\right\}$, we have
\[
|g_1+\alpha_1\xi|\geq\frac{1}{2}\underline{g}_{1}, \quad  
|g_2-\alpha_2\xi|\geq\frac{1}{2}\underline{g}_{2},
\]
Thus
\[
\left|\widetilde{h}(t)-\sqrt{\frac{|g_1|}{|g_2|}}\right|\leq 2\frac{\alpha_1\overline{g}_{2}+\alpha_2\overline{g}_{1}}{\sqrt{\underline{g}_{1}\underline{g}_{2}^3}}t.\]
Using the definition of $t$ and $\widetilde{h}$, we obtain the result.

\medskip

For the second inequality, we only need to note that
		\[
		\begin{aligned}
			&\left|\frac{g_1+r_1+s_1}{g_2+r_2+s_2}-\frac{g_1}{g_2}-\frac{r_1}{g_2} +\frac{g_1r_2}{g_2^2}\right|\\
			\leq&\left|\frac{s_1}{g_2+r_2+s_2}\right|+\left|\frac{-g_1s_2-r_1r_2-r_1s_2}{(g_2+r_2+s_2)g_2}\right|+\left|\frac{g_1r_2^2+g_1r_2s_2}{(g_2+r_2+s_2)g_2^2}\right|.
		\end{aligned}  \]
		The result comes from straight calculation.

\medskip 

For the last inequality, 
consider $f(t)=\sqrt{g_1(x)+t}$ for $t>-g_1(x)$, which satisfies: 
\[
f(t) = 
\sqrt{g_1(x)}+\frac{1}{2\sqrt{g_1(x)}}t-\frac{1}{8}(g_1(x)+\xi)^{-\frac{3}{2}}t^2,
\]
where $\xi$ is between $0$ and $t$. 
Now let $t=r_1(x)+s_1(x)$. 
When $\epsilon\leq \min\left\{\epsilon_0, 1, \left(\frac{|g_{\rm inf,1}|}{4\alpha_1}\right)^{\frac{1}{\beta}}, \left(\frac{|g_{\rm inf,1}|}{4\alpha_3}\right)^{\frac{1}{2\beta}}\right\}$, $|t|\leq \frac{|g_{\rm \inf,1}|}{2}$. Thus we have
\[
\sqrt{g_1+r_1+s_1}
=\sqrt{g_1}+\frac{r_1+s_1}{2\sqrt{g_1}}-\frac{1}{8}(g_1+\xi)^{-\frac{3}{2}}(r_1+s_1)^2,\]
where $\xi$ is between $0$ and $r_1(x)+s_1(x)$. Therefore
\[
\left|\sqrt{g_1+r_1+s_1}-\sqrt{g_1}-\frac{r_1}{2\sqrt{g_1}}\right|\leq
\frac{\alpha_3\epsilon^{2\beta}}{2\sqrt{g_{\rm \inf,1}}}+\frac{\sqrt{2}}{4g_{\rm \inf,1}^{\frac{3}{2}}}(\alpha_1+\alpha_3)^2\epsilon^{2\beta}
\]
\end{proof}

\bibliographystyle{siamplain}
\bibliography{2devp}

\begin{thebibliography}{1}

\bibitem{Davis1970}
{\sc C.~Davis and W.~M. Kahan}, {\em The rotation of eigenvectors by a
  perturbation. {III}}, SIAM J. Numer. Anal., 7 (1970), pp.~1--46,
  \url{https://doi.org/10.1137/0707001}.

\bibitem{demmel1997applied}
{\sc J.~W. Demmel}, {\em Applied Numerical Linear Algebra}, SIAM, Philadelphia,
  1997, \url{https://doi.org/10.1137/1.9781611971446}.

\bibitem{Gohberg2009Matrix}
{\sc I.~Gohberg, P.~Lancaster, and L.~Rodman}, {\em Matrix Polynomials}, SIAM,
  Philadelphia, PA., 2009, \url{https://doi.org/10.1137/1.9780898719024}.

\bibitem{2007Higham}
{\sc N.~J. Higham, R.-C. Li, and F.~Tisseur}, {\em Backward error of polynomial
  eigenproblems solved by linearization}, SIAM J. Matrix Anal. Appl., 29
  (2007), pp.~1218--1241, \url{https://doi.org/10.1137/060663738}.

\bibitem{Krantz2002}
{\sc H.~R.~P. Krantz, S.~G.}, {\em A {P}rimer of {R}eal {A}nalytic
  {F}unctions}, Birkh\"{a}user, Boston, MA, 2~ed., 2002.

\bibitem{2DEVPII}
{\sc T.~Lu, Y.~Su, and Z.~Bai}, {\em 2{D} {E}igenvalue {P}roblem {II}:
  {R}ayleigh {Q}uotient {I}teration and {A}pplications}, 2022,
  \url{https://arxiv.org/abs/arXiv:2209.12040}.

\bibitem{Stewart1990Sun}
{\sc G.~W. Stewart and J.-G. Sun}, {\em Matrix {P}erturbation {T}heory},
  Academic {P}ress, New York, 1990.

\bibitem{2DEVPI}
{\sc Y.~Su, T.~Lu, and Z.~Bai}, {\em 2{D} {E}igenvalue {P}roblem {I}: Existence
  and {N}umber of {S}olutions}, 2022,
  \url{https://arxiv.org/abs/arXiv:1911.08109v3}.

\bibitem{Zhang2015}
{\sc L.-H. Zhang and R.-C. Li}, {\em Maximization of the sum of the trace ratio
  on the {S}tiefel manifold, {II}: {C}omputation}, Sci. China Math., 58 (2015),
  pp.~1549--1566, \url{https://doi.org/10.1007/s11425-014-4825-z}.

\end{thebibliography}
\end{document}